\DeclareMathOperator{\supp}{Supp}
\newcommand{\dt}{\partial_t}
\newcommand{\dx}{\partial_x}
\newcommand{\dz}{\partial_z}
\newcommand{\idx}{\,d\vec{x}}
\newcommand{\dv}{\mathrm{div}}
\newcommand{\dvh}{\mathrm{div}_h}
\newcommand{\subeqref}[2]{$ \eqref{#1}_{#2} $}
\newcommand{\norm}[2]{\Arrowvert #1 \Arrowvert_{#2}}
\newcommand{\Lnorm}[1]{L^{#1}}
\newcommand{\Hnorm}[1]{H^{#1}}
\newtheorem{lm}{Lemma}
\newtheorem{thm}{Theorem}
\newtheorem{remark}{Remark}
\newtheorem{definition}{Definition}
\title{Well-posedness of strong solutions to the anelastic equations of stratified viscous flows
 }
\author{Xin Liu\footnote{Department of Mathematics, Texas A{\&}M University, College Station, TX 77843, USA. Email: stleonliu@gmail.com} \,\, and \,  Edriss S. Titi\footnote{Department of Mathematics, Texas A{\&}M University, College Station,  TX 77840, USA.  Department of Applied Mathematics and Theoretical Physics, University of Cambridge, Cambridge CB3 0WA, UK.
		Department of Computer Science and Applied Mathematics, Weizmann Institute of Science, Rehovot 76100, Israel. Email: {titi@math.tamu.edu}\, and \, {Edriss.Titi@damtp.cam.ac.uk}}
}
\date{June 26, 2019}
\begin{document}

\maketitle

\begin{abstract}
	We establish the local and global well-posedness of strong solutions to the two-- and three-dimensional anelastic equations of stratified viscous flows.
	In this model,
	the interaction of the density profile with the velocity field is taken into account, and the density background profile is permitted to have physical vacuum singularity.
	The existing time of the solutions is infinite in two dimensions, with general initial data, and in three dimensions with small initial data.
\end{abstract}
{\par\noindent {\bf Keywords:} anelastic approximation; well-posedness; physical vacuum; stratified flows}
{\par \noindent {\bf MSC:} 35Q30; 35Q86; 76D03; 76D05}

\section{Introduction}

The anelastic Navier-Stokes system for stratified flows,
\begin{equation}\label{eq:2d-anel-NS}
    \begin{cases}
        \rho (\dt u + u \cdot \nabla u) + \rho \nabla p = \Delta u & \text{in} ~ \Omega, \\
        \dv (\rho u) = 0 & \text{in} ~ \Omega,
    \end{cases}
\end{equation}
is derived as the limiting system of the compressible Navier-Stokes system after filtering out the acoustic waves for strong stratified flows.
Here the velocity field $ u $ and the pressure $ p $ are the unknowns while the background density $ \rho $ is given as a time-independent, negative function.
The rigorous derivation of \eqref{eq:2d-anel-NS} can be found in \cite{Masmoudi2007}. Comparing to the incompressible Navier-Stokes system (see, e.g., \cite{Temam1984,Constantin1988}), the main difference is the incompressible condition $ \dv u = 0 $ is replaced by the anelastic relation $ \dv ( \rho u ) = 0 $ with the background density profile $ \rho $, which represents the strong stratification owing to the balance of the gravity and the pressure (see, e.g., \cite{FeireislSingularLimits}). Such an approximation preserves slight compressibility while filtering out the acoustic waves, which significantly simplifies the original compressible Navier-Stokes system, and enables more efficient computation applications to relevant model flows in physical reality. In particular, the anelastic approximation is used to describe the semi-compressible ocean dynamics (see, e.g., \cite{Dewar2015,Dewar2016}), as well as the tornado-hurricane dynamics (see, e.g., \cite{Nolan2002,Saal2010}). We refer the readers to \cite{Ogura1962,Klein2004,Majda2003,Braginsky,Bois2006,Feireisl2008,klein2010regime,klein2016doubly} for related topics and comparisons of various models of the atmospheric and oceanic dynamics.

We remark that the background density profile $ \rho $ in the anelastic relation $ \dv(\rho u) = 0 $ is given by the resting state $ \nabla P(\rho) = \rho g \vec{e}_z $, where $ P(\rho) $ denotes the pressure potential and $ g $ is the gravity acceleration. For the sake of simplifying the presentation, we have choosed the gravity to point upwards, which can be done after performing a vertical reflection of the coordinates.
In the case when the flow connects to vacuum continuously, the resting state yields a degenerate density profile. For an isentropic flow with $ P(\rho) = \rho^\gamma, \gamma > 1 $, this implies $ \rho^{\gamma-1} \simeq z $, referred to as the physical vacuum in the study of compressible flows (see, e.g., \cite{Liu1996,Jang2011}). The main characteristics of the physical vacuum is the H\"older continuity of the background density profile, whose derivatives are singular at $ z = 0 $. While there are some recent developments in the global stability of background solutions to compressible Euler or Navier-Stokes equations for one-dimensional or radial-symmetric flows (see, e.g., \cite{LuoXinZeng2016,LuoXinZeng2015,Hadzic2016a,Hadzic2016}), the corresponding multi-dimensional problem is mostly open. On the other hand, after formally filtering out the acoustic waves by sending the Mach number and the Froude number to zero at the same rate in the compressible Navier-Stokes equations with physical vacuum, the resulting equations appear to be the aforementioned anelastic system with $ \rho = z^\alpha, \alpha = 1/(\gamma-1) > 0 $.

In this work, we aim at studying the well-posedness issue of strong solutions to \eqref{eq:2d-anel-NS} in $ \Omega :=2 \mathbb T^{n-1} \times (0,1) = \lbrace \vec{x} = (x,z) \rbrace \subset \mathbb R^{n-1} \times \mathbb R = \mathbb R^n $, where $ n \in \lbrace 2,3\rbrace $ denotes the spatial-dimension. Specifically, we will study system \eqref{eq:2d-anel-NS} with two kinds of density profiles:
\begin{align}
	& \text{non-degenerate case:} ~ \inf_{\Omega} \rho > 0, ~ \text{and} ~ \rho ~ \text{is smooth in $ 2 \mathbb T^n $}; \label{density-profile-smooth} \\
	& \text{physical vacuum case:}~ \rho = \rho_{\mathrm{pv}}:= z^\alpha (2-z)^\alpha ~ \text{for some $ \alpha > 3/2 $}.  \label{density-profile-physical-vacuum}
\end{align}
(See Remark \ref{rm:compatible-condition}, below.)
After denoting the velocity field $ u $ by its horizontal component $ v $ and its vertical component $ w $, i.e., $ u = (v, w)^\top $, where $ v $ is a scaler if $ n = 2 $ and a two-dimensional vector if $ n = 3 $, system \eqref{eq:2d-anel-NS} is complemented with the following stress-free and non-permeable boundary conditions, respectively,
\begin{equation}\label{bc:2d-anel-NS}
    \dz v \big|_{z=0,1}, ~ w \big|_{z=0,1} = 0,
\end{equation}
and initial data
\begin{equation}\label{in:2d-anel-NS}
	\begin{gathered}
	u\big|_{t=0} = u_{in} = (v_{in}, w_{in})^\top \in H^2(\Omega), \\
	\text{and} ~ \supp (v_{in}),\supp(w_{in}) \subset 2 \mathbb T^{n-1} \times (\delta, 1-\delta) ~ \text{for some} ~ \delta \in (0,1/4).
	\end{gathered}
\end{equation}
Compatibility conditions for $ u_{in} $ are given by,
\begin{equation}\label{in:compatibility}
	\begin{gathered}
		\dz v_{in} \big|_{z=0,1}, w_{in} \big|_{z=0,1} = 0, ~ \dv(\rho u_{in}) = 0, \\
		\rho u_{t}\big|_{t=0} =  \rho u_{in,1} := \Delta u_{in} - \rho u_{in} \cdot \nabla u_{in} - \rho \nabla p_{in} \in L^2(\Omega),
	\end{gathered}
\end{equation}
where $ p_{in} $ is the solution to the following elliptic problem
\begin{equation}\label{initial-p}
	\begin{gathered}
	\dv (\rho \nabla p_{in}) = \dv \Delta u_{in} - \dv(\rho u_{in}\cdot \nabla u_{in} ),\\
	 \dz p_{in}\big|_{z=0,1} = 0, ~ \int_\Omega p_{in} \idx = 0.
	\end{gathered}
\end{equation}
Hereafter, for any subscript $ s $, we will alway use the letter $ v_{s} $ to denote the horizontal velocity, $ w_s $ to denote the vertical velocity, and $ u_s $ to denote the velocity field, i.e., $ u_s = (v_s,w_s)^\top $.
\begin{remark}\label{rm:compatible-condition}
	The meaning of \eqref{density-profile-smooth} is that the non-degenerate profile $ \rho $ can be extended as a non-degenerate and smooth function in $ 2 \mathbb T^n $.
	
	On the other hand, one can replace $ \rho_\mathrm{pv} $ in \eqref{density-profile-physical-vacuum} with any density profile which satisfies the aforementioned physical vacuum near $ z = 0 $ (i.e., $ \rho \simeq z^\alpha $ near $ z = 0 $), and is smooth and non-degenerate at $ z = 1 $. The requirement of smoothness and the non-degeneracy of the density profile at $ z =1 $ is owing to technical reasons.
\end{remark}
\begin{remark}\label{rm:compatible-condition-2}
	We require the supports of $ v_{in} $ and $ w_{in} $ to be away from the boundary $ \lbrace z = 0, 1 \rbrace $.
	This can be modified as follows:
	\begin{gather*}
	\text{let} ~ \eta, \zeta ~ \text{be even, odd in the $ z $-variable, respectively, }\\
	\text{and} ~  (\eta, \zeta) \in H^2(2\mathbb T^n;\mathbb R^{n-1})\times H^2(2\mathbb T^n;\mathbb R); \\ \text{then take} ~  (v_{in},w_{in})^\top = (\eta\big|_{\Omega},\zeta\big|_{\Omega})^\top.
	\end{gather*}
	Equation \eqref{initial-p} should be considered as a compatibility condition of $ u_{in} $.
\end{remark}

In comparison to the Navier-Stokes system (see, e.g., \cite{Constantin1988}), the density profile interacts with the velocity field. To explain this statement, let us forget about the boundary and consider \eqref{eq:2d-anel-NS} in $ 2\mathbb T^n $ for a moment. Also we assume the density profile $ \rho $ is non-degenerate and smooth. Let $ u $ be any smooth vector field, and suppose that it can be decomposed, in analogy with the Helmholtz decomposition, as
\begin{equation*}
	u = u_{\rho,\sigma} + \nabla \phi,
\end{equation*}
where $ \dv(\rho u_{\rho, \sigma}) = 0 $, and $ \phi $ is a smooth function. One can see that $ u_{\rho,\sigma} $ and $ \nabla \phi $ are orthogonal in $ L^2(\rho \idx) $, i.e., the square--integrable space with respect to the measure $ \rho \idx  $, where $ \idx $ is the Lebesgue measure. Then one can easily see that, the $ H^s $--regularity of $ u_{\rho, \sigma} $, $ s \geq 1 $, depends not only on the regularity of $ u $, but also on that of $ \rho $. Such an interaction of $ \rho $ in the $ H^s $-regularity estimates causes the main difficulty in the study of strong solutions. As one will see later, one will need to consider the interaction of the pressure term $ \rho \nabla p $ with the nonlinearity term $ \rho u\cdot \nabla u $ as well as the viscosity term $ \Delta u $. We take advantage of the regular density profile in \eqref{density-profile-smooth}, and consider an associated problem in $ 2\mathbb T^n $, whose solutions satisfies \eqref{eq:2d-anel-NS} in $ \Omega $.
Then we employ an elementary approach in the Galerkin's approximation which takes into account the aforementioned interactions. After studying the regularity, we restrict our solutions back to the original domain $ \Omega $ and obtain a unique strong solution to \eqref{eq:2d-anel-NS} with \eqref{bc:2d-anel-NS} in the non-degenerate case, i.e., \eqref{density-profile-smooth}.

To deal with the physical vacuum profile in \eqref{density-profile-physical-vacuum}, we approximate the problem with a sequence of non-degenerate profiles in the class of \eqref{density-profile-smooth}.
The existence theorem in the non-degenerate case yields a sequence of approximating solutions.
Then we derive the necessary uniform weighted estimates. To handle the physical vacuum density profile, the desired strong solutions to \eqref{eq:2d-anel-NS} in the physical vacuum case, i.e., \eqref{density-profile-physical-vacuum}, are constructed as the limit of the approximating sequence. However, the solutions that we obtain lack regularity on the boundary $ \lbrace z = 0 \rbrace $, due to the weighted estimates. In particular, the solutions are not regular enough to have trace of $ \nabla u $ on $ \lbrace z = 0 \rbrace $, which causes troubles when one try to show the uniqueness of solutions. We employ the arguments originated in \cite{Serrin1963} for the Navier-Stokes system to establish the uniqueness of strong solutions.

Next, we
sum up the main theorems. The first theorem concerns the local well-posedness of strong solutions to \eqref{eq:2d-anel-NS}:
\begin{thm}\label{thm:local}
	Let $ \rho $ satisfy either \eqref{density-profile-smooth} or \eqref{density-profile-physical-vacuum}.
	Consider initial data $ u_{in} \in H^2 $ satisfying \eqref{in:2d-anel-NS} and \eqref{in:compatibility}. There exists a unique strong solution $ (u, p) $ to \eqref{eq:2d-anel-NS} with \eqref{bc:2d-anel-NS} in $ [0,T] $, for some $ T\in (0,\infty) $.
	In the case of \eqref{density-profile-smooth}, the strong solution satisfies the regularity:
		\begin{gather*}
			u \in L^\infty (0,T; H^2(\Omega))\cap L^2(0,T;H^3(\Omega)),	\\
			\dt u \in L^\infty (0,T;L^2(\Omega))\cap L^2(0,T;H^1(\Omega)), \\
			\nabla p \in L^\infty (0,T; L^2(\Omega))\cap L^2(0,T;H^1(\Omega)).
		\end{gather*}
		In the case of \eqref{density-profile-physical-vacuum}, the strong solution satisfies the regularity:
		\begin{gather*}
		 u, \nabla_h u \in L^\infty(0,T;H^1(\Omega)), ~ u \in L^2(0,T;H^1(\Omega)), \\
		  \rho_{\mathrm{pv}} \partial_{zz} u \in L^\infty(0,T; L^2(\Omega)), ~ \rho_{\mathrm{pv}}^{1/2} u_t \in L^\infty(0,T;L^2(\Omega)), \\
		  u_t \in L^2(0,T;H^1(\Omega)), ~ \rho_{\mathrm{pv}}^{2} \nabla p \in L^\infty(0,T;L^2(\Omega)).
		\end{gather*}
		See section \ref{sec:preliminary} for the notations $ \partial_{zz}, \nabla_h $ etc..
\end{thm}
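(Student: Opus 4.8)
The plan is to establish Theorem~\ref{thm:local} in two stages, treating the non-degenerate case \eqref{density-profile-smooth} first and then passing to the physical vacuum case \eqref{density-profile-physical-vacuum} by approximation. For the non-degenerate case, I would set up a Galerkin approximation adapted to the weighted divergence-free constraint $\dv(\rho u)=0$. The natural function space is $H_{\rho,\sigma} := \{u : \dv(\rho u)=0,\ w|_{z=0,1}=0\}$ with inner product $\langle u, \tilde u\rangle_{L^2(\rho\idx)}$, and one chooses a basis of eigenfunctions of an appropriate weighted Stokes-type operator. Projecting \eqref{eq:2d-anel-NS} onto finite-dimensional subspaces eliminates the pressure (since $\rho\nabla p \perp H_{\rho,\sigma}$ in $L^2(\rho\idx)$), giving a system of ODEs with local-in-time solvability. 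The crucial analytic content is the a priori estimates that are uniform in the Galerkin dimension: first the basic energy estimate multiplying by $u$, then the $H^1$ estimate, and then — this is where the density interaction enters — the estimate for $\dt u$, obtained by differentiating the equation in time and testing against $\dt u$ in $L^2(\rho\idx)$, yielding control of $\rho^{1/2}\dt u$ in $L^\infty_t L^2$ and $\nabla \dt u$ in $L^2_t L^2$. The compatibility condition \eqref{in:compatibility} supplies the initial bound $\|\rho^{1/2}\dt u|_{t=0}\|_{L^2} = \|\rho^{1/2} u_{in,1}\|_{L^2} \lesssim \|u_{in,1}\|_{L^2}$. From the $\dt u$ bound one recovers, via the elliptic estimate for the pressure from \eqref{initial-p}-type equations and the elliptic regularity of the weighted Stokes operator, the stated $L^\infty_t H^2 \cap L^2_t H^3$ regularity for $u$ and the $L^\infty_t L^2 \cap L^2_t H^1$ bound for $\nabla p$. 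A standard contraction/energy argument gives uniqueness, and all estimates are local in time with $T$ depending only on $\|u_{in}\|_{H^2}$ and the density profile.

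For the physical vacuum case, I would regularize the profile: choose $\rho_\varepsilon$ non-degenerate and smooth with $\rho_\varepsilon \to \rho_{\mathrm{pv}}$, agreeing with $\rho_{\mathrm{pv}}$ away from $z=0$ and satisfying $\rho_\varepsilon \simeq \varepsilon + z^\alpha$ near $z=0$; the first stage then produces strong solutions $(u_\varepsilon, p_\varepsilon)$ on a common interval $[0,T]$. The heart of the argument is deriving $\varepsilon$-uniform \emph{weighted} estimates matching the regularity claimed: $u_\varepsilon, \nabla_h u_\varepsilon$ bounded in $L^\infty_t H^1$, $\rho_{\mathrm{pv}}\partial_{zz} u_\varepsilon$ in $L^\infty_t L^2$, $\rho_{\mathrm{pv}}^{1/2}\dt u_\varepsilon$ in $L^\infty_t L^2$, $\dt u_\varepsilon$ in $L^2_t H^1$, and $\rho_{\mathrm{pv}}^2 \nabla p_\varepsilon$ in $L^\infty_t L^2$. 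These come from testing the equation and its time-derivative against suitably weighted multipliers (e.g.\ $\rho$-weighted versions of $\dt u$ and of horizontal derivatives), exploiting that horizontal derivatives commute with the $z$-dependent weight while vertical derivatives must be paired with powers of $\rho_{\mathrm{pv}}$ to absorb the singularity of $\rho_{\mathrm{pv}}'$ at $z=0$; the condition $\alpha > 3/2$ is what makes the weighted Hardy/trace and interpolation inequalities close. Then I would extract weak-$*$ limits, use Aubin--Lions compactness (the $\dt u_\varepsilon \in L^2_t H^1$ bound plus spatial bounds give strong $L^2_{t,x}$ convergence) to pass to the limit in the nonlinearity, and identify $(u,p)$ as a strong solution with the stated regularity.

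The main obstacle I anticipate is twofold. Analytically, the hardest point is closing the uniform weighted higher-order estimates for $u_\varepsilon$ near $z=0$: the term $\Delta u = \partial_{zz} u + \Delta_h u$ interacts with $\rho\nabla p$ and $\rho u\cdot\nabla u$ through the constraint, and reconstructing $\partial_{zz} u$ from $\rho^{1/2}\dt u$ and the pressure requires inverting a degenerate elliptic operator whose coefficients vanish like $z^\alpha$ — controlling $\rho_{\mathrm{pv}}\partial_{zz} u$ rather than $\partial_{zz} u$ is forced, and one must check this degraded control is still enough to make sense of every term. The second obstacle is uniqueness in the physical vacuum case: because $\nabla u$ has no trace on $\{z=0\}$, the usual energy method for the difference of two solutions fails at the boundary, and as the introduction indicates one must instead run a Serrin-type argument~\cite{Serrin1963}, testing the difference equation against the solution of a dual (backward) problem with smooth data and using only the interior regularity that is available, which requires care near the degenerate boundary.
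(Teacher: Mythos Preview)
Your outline is broadly correct and would likely succeed, but it differs from the paper's proof in two noteworthy ways.

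\textbf{Non-degenerate case.} You propose a Galerkin scheme built on eigenfunctions of a weighted Stokes operator in the constrained space $H_{\rho,\sigma}$, projecting away the pressure. The paper does \emph{not} do this. Instead it exploits the stress-free boundary conditions \eqref{bc:2d-anel-NS} to perform an even/odd symmetric-periodic extension of $(\rho,v,w,p)$ from $\Omega$ to the full torus $2\mathbb T^n$, and then runs a Galerkin approximation with explicit trigonometric modes. Crucially, the approximation space $X_m$ is \emph{not} divergence-free: the pressure is kept as an unknown and recovered at each level by solving the finite-dimensional elliptic system $\dv\mathbb P_m(\rho\nabla p_m)=\cdots$. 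This bypasses the need to develop any spectral theory for a weighted Stokes operator and makes the interaction between $\rho$ and the Helmholtz-type decomposition (which the introduction flags as the main difficulty) completely explicit. Your approach is the more classical Navier--Stokes route and should work when $\rho$ is smooth and bounded below, but it would require you to establish regularity of the weighted Leray projector and the associated eigenfunctions, which is extra work the paper's extension trick avoids.

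\textbf{Physical vacuum case.} Your plan matches the paper's: regularize $\rho_{\mathrm{pv}}$, apply the non-degenerate result, derive uniform weighted estimates via Hardy-type inequalities (this is where $\alpha>3/2$ enters), and pass to the limit. One small correction: the paper takes $\rho_\varepsilon=q_\varepsilon^\alpha$ with $q_\varepsilon\simeq z+\varepsilon$, so $\rho_\varepsilon\simeq(z+\varepsilon)^\alpha$ rather than $\varepsilon+z^\alpha$; this form is what makes the Hardy inequalities of Lemma~\ref{lm:hardy-type-ineq} apply directly. More substantively, your description of the Serrin-type uniqueness argument is not what the paper does: there is no dual or backward problem. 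The argument is simply to test the equation for $u_1$ against $u_2$ and the equation for $u_2$ against $u_1$, sum, and combine with the energy identities \eqref{id:energy-identity} for each solution separately; the difference $\|\rho_{\mathrm{pv}}^{1/2}(u_1-u_2)\|_{L^2}^2$ then satisfies a Gr\"onwall inequality. The point of invoking Serrin is only that this cross-testing avoids integrating by parts in a way that would require a trace of $\nabla u$ on $\{z=0\}$.
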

We refer the detailed description of local well-posedness to Theorem \ref{prop:local-exist-nonsingular} and Theorem \ref{prop:local-exist-2d-anel-NS}, below.
At the same time, we also have the following theorem concerning global well-posedness of strong solutions:
\begin{thm}\label{thm:global}
	Under either one of the following conditions, the existing time of the local strong solutions constructed in Theorem \ref{thm:local} becomes infinite:
	\begin{itemize}
		\item[1.] $ n = 2 $;
		\item[2.] $ n = 3 $, provided initial velocity $ u_{in} $ satisfies, for $ \rho $ satisfying either \eqref{density-profile-smooth} or \eqref{density-profile-physical-vacuum},
		\begin{equation*}
			\norm{\rho^{1/2}  u_{in}}{\Lnorm{2}}^2 + \norm{\nabla u_{in}}{\Lnorm{2}}^2 + \norm{\rho^{1/2} u_{in,1}}{\Lnorm{2}}^2 \leq \mu^2,
		\end{equation*}
	\end{itemize}
	with some $ \mu \in (0,1) $, small enough.
\end{thm}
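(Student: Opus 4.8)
The plan is to upgrade the local existence of Theorem \ref{thm:local} to a global statement via a standard continuation argument: it suffices to show that the relevant norms — $\norm{\rho^{1/2} u}{\Lnorm 2}^2 + \norm{\nabla u}{\Lnorm 2}^2 + \norm{\rho^{1/2} u_t}{\Lnorm 2}^2$ together with the corresponding space–time integrals of $\nabla u_t$, $\nabla^2 u$ (or their weighted analogues in the physical vacuum case) — stay finite on any finite time interval, so that the local solution can be repeatedly extended without its lifespan shrinking to zero. I would carry this out in three layers of energy estimates, in the order dictated by the structure of \eqref{eq:2d-anel-NS}.

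First, the basic energy identity: multiply \subeqref{eq:2d-anel-NS}{1} by $u$ and integrate over $\Omega$, using $\dv(\rho u)=0$ to kill the convection term and the boundary conditions \eqref{bc:2d-anel-NS} to integrate the viscosity term by parts, obtaining $\frac12\frac{d}{dt}\norm{\rho^{1/2}u}{\Lnorm2}^2 + \norm{\nabla u}{\Lnorm2}^2 = 0$. Second, the key estimate on $\rho^{1/2} u_t$: differentiate \subeqref{eq:2d-anel-NS}{1} in time, multiply by $u_t$, integrate, and use $\dv(\rho u_t)=0$ to remove the pressure term; one is left with controlling $\int \rho (u_t\cdot\nabla u + u\cdot\nabla u_t)\cdot u_t\idx$, which after an integration by parts reduces (modulo lower order) to $\int \rho (u_t\cdot\nabla u)\cdot u_t\idx$. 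This is where the dimension enters: in $n=2$ one uses the Ladyzhenskaya inequality $\norm{f}{\Lnorm4}^2 \lesssim \norm{f}{\Lnorm2}\norm{\nabla f}{\Lnorm2}$ (with the $\rho$ weight absorbed using the structure of the density profiles and the Hardy-type inequalities from Section \ref{sec:preliminary}) so that the cubic term is bounded by $\varepsilon\norm{\nabla u_t}{\Lnorm2}^2 + C\norm{\nabla u}{\Lnorm2}^2\norm{\rho^{1/2}u_t}{\Lnorm2}^2$, and the basic energy estimate gives $\int_0^\infty \norm{\nabla u}{\Lnorm2}^2\,dt<\infty$, so Gronwall's inequality closes the bound globally with no smallness assumption. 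In $n=3$ the analogous term only yields, via $\norm{f}{\Lnorm3}^3 \lesssim \norm{f}{\Lnorm2}^{3/2}\norm{\nabla f}{\Lnorm2}^{3/2}$ or $\norm{f}{\Lnorm6}\lesssim\norm{\nabla f}{\Lnorm2}$, a bound like $C\norm{\nabla u_t}{\Lnorm2}^{?}\norm{\rho^{1/2}u_t}{\Lnorm2}^{?}$ whose powers do not permit unconditional absorption; here one invokes the smallness hypothesis $\norm{\rho^{1/2}u_{in}}{\Lnorm2}^2+\norm{\nabla u_{in}}{\Lnorm2}^2+\norm{\rho^{1/2}u_{in,1}}{\Lnorm2}^2\le\mu^2$ and a bootstrap/continuity argument: assuming the combined quantity stays below, say, $2\mu^2$ on $[0,T]$, the nonlinear terms are bounded by $C\mu$ times the good terms, which for $\mu$ small enough can be absorbed, showing the quantity is in fact $\le\tfrac32\mu^2$, hence never reaches $2\mu^2$. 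Third, once $\rho^{1/2}u_t\in L^\infty_tL^2_x$ and $\nabla u_t\in L^2_tL^2_x$ are globally controlled, I recover $\nabla^2 u$ and $\nabla p$ (respectively their weighted versions $\rho_{\mathrm{pv}}\partial_{zz}u$, $\rho_{\mathrm{pv}}^2\nabla p$) globally by viewing \subeqref{eq:2d-anel-NS}{1} at fixed time as a stationary Stokes-type problem with right-hand side $\rho u_t + \rho u\cdot\nabla u$ now in $L^2$, applying the elliptic estimates established in the proof of Theorem \ref{thm:local}; this verifies that all norms appearing in the regularity classes of Theorem \ref{thm:local} remain finite on $[0,\infty)$, which is exactly the continuation criterion.

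I expect the main obstacle to be the treatment of the weight $\rho_{\mathrm{pv}}=z^\alpha(2-z)^\alpha$ in the physical vacuum case: the cubic term $\int\rho_{\mathrm{pv}}(u_t\cdot\nabla u)\cdot u_t\idx$ cannot be controlled by a naive Sobolev embedding because $u_t$ has only the degenerate regularity $\rho_{\mathrm{pv}}^{1/2}u_t\in L^2$, $u_t\in L^2_tH^1$, so one must carefully distribute the powers of $\rho_{\mathrm{pv}}$ — using that $\rho_{\mathrm{pv}}^{1/2}$ is bounded, the Hardy inequality $\norm{z^{-1}f}{\Lnorm2}\lesssim\norm{\dz f}{\Lnorm2}$ (valid since $\alpha>3/2$ so the weights are favourable), and anisotropic estimates separating horizontal and vertical directions as in the setup $u=(v,w)$ — to land the term between the dissipation $\norm{\nabla u_t}{\Lnorm2}^2$ and the energy quantity. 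The condition $\alpha>3/2$ should be exactly what makes the weighted interpolation inequalities strong enough; verifying that the global-in-time smallness of $\mu$ (rather than a $T$-dependent smallness) genuinely suffices, i.e. that the constants in these weighted inequalities are uniform, is the crux. Everything else is a routine Gronwall/bootstrap bookkeeping, essentially re-running the local estimates of Theorem \ref{thm:local} while tracking that the time-integrated coefficients (which are finite over $[0,\infty)$ thanks to the basic energy decay) do not blow up.
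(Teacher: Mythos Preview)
Your overall architecture (basic energy, then the $u_t$-equation, then elliptic recovery) is sound, and for $n=2$ your Ladyzhenskaya route is a legitimate alternative to what the paper actually does: the paper instead bounds the cubic terms via $\norm{z^\alpha u}{\Lnorm\infty}$, invokes the two–dimensional Brezis--Gallouet--Wainger inequality to trade $L^\infty$ for $H^1$ times a logarithm of $H^2$, feeds the elliptic estimate \eqref{lene:0H^2} into that logarithm, and closes a $\log\log$-type Gr\"onwall. Your approach is more elementary and avoids BGW; the paper's buys a cleaner growth bound but is structurally heavier. One omission in your write-up: you never run the ``middle'' estimate (test \subeqref{eq:2d-anel-NS}{1} with $u_t$, yielding $\tfrac{d}{dt}\norm{\nabla u}{\Lnorm2}^2+\norm{\rho^{1/2}u_t}{\Lnorm2}^2=-\int\rho(u\cdot\nabla u)\cdot u_t$). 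Without it you do not get $\norm{\nabla u}{\Lnorm2}\in L^\infty_t$, and your Step~3 elliptic recovery has $\norm{u}{\Hnorm1}$ on the right-hand side (cf.\ \eqref{lene:0H^2}), so the argument is incomplete as stated. This is easily fixed in 2D.

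For $n=3$ there is a genuine gap. Your claim that under the bootstrap hypothesis ``the nonlinear terms are bounded by $C\mu$ times the good terms'' does not follow from interpolation on $u_t$ alone. If you estimate $\int\rho(u_t\cdot\nabla u)\cdot u_t$ by $\norm{\rho^{1/2}u_t}{\Lnorm3}\norm{\nabla u}{\Lnorm2}\norm{\rho^{1/2}u_t}{\Lnorm6}$ (or any similar splitting that puts the higher integrability on $u_t$), after Young's inequality you obtain $\varepsilon\norm{\nabla u_t}{\Lnorm2}^2+C\mu^k$ with a \emph{residual additive constant}, not a multiple of the dissipation. Integrated over $[0,T]$ this constant produces a term $C\mu^kT$, so the bootstrap $E\le 2\mu^2$ only closes for $T\lesssim\mu^{-(k-2)}$, not globally. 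The paper avoids this by using the elliptic estimate \emph{inside} the energy argument rather than after it: one writes $\int\rho(u_t\cdot\nabla u_t)\cdot u\le\tfrac12\norm{\nabla u_t}{\Lnorm2}^2+C\norm{z^\alpha u}{\Lnorm\infty}^2\norm{\rho^{1/2}u_t}{\Lnorm2}^2$, uses $H^2\hookrightarrow L^\infty$ in three dimensions, and then \eqref{applying-hardy}--\eqref{lene:0H^2} give $\norm{z^\alpha u}{\Hnorm2}\lesssim E^{1/2}+E^{3/2}$. The nonlinear terms are thus bounded by a function of $E$ times the dissipation $D$, with no leftover constant, yielding $\tfrac{d}{dt}E+(1-CE^q)D\le 0$ and hence $E(t)\le E(0)$ for $E(0)$ small. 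In short, your ordering (elliptic estimate as a separate Step~3 after the bootstrap) must be reversed in 3D: the Stokes-type bound on second derivatives of $u$ is the mechanism that makes the smallness bootstrap close uniformly in time.
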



Notice that, by taking $ \rho \equiv 1 $, Theorems \ref{thm:local} and \ref{thm:global} apply to the homogeneous Navier--Stokes equations (see, e.g., \cite{Temam1984}). The compatibility conditions in \eqref{in:compatibility} are similar to those in the study of the non-homogeneous incompressible Navier--Stokes equations (see, e.g., \cite{Choe-Kim-2003}).

The rest of this work is organized as follows. In section \ref{sec:preliminary}, we summarize the notations, definitions and inequalities which will be used in this paper.
In section \ref{sec:local-solutions-nonsingular}, we construct the local strong solutions to \eqref{eq:2d-anel-NS} with the non-degenerate density profile. In section \ref{sec:physical-vacuum}, we consider \eqref{eq:2d-anel-NS} with the physical vacuum profile, when the uniform estimates and approximation arguments are presented. These two sections finish the proof of Theorem \ref{thm:local}. In the last two sections, i.e., sections \ref{sec:global-2d} and \ref{sec:global-3d}, we employ some global a priori estimates, which lead to the proof of Theorem \ref{thm:global}.

\section{Preliminaries}\label{sec:preliminary}

Throughout this paper, we use the following definition of strong solutions:
\begin{definition}[Strong solutions]\label{def:strong-solution}
	$ (u,p) $ is called
	a strong solution to \eqref{eq:2d-anel-NS} if system \eqref{eq:2d-anel-NS} holds almost everywhere in $ \Omega $.
\end{definition}

We use the notation $ \partial_x $ to denote the spatial derivative in the horizontal direction, i.e. derivative with respect to $ x \in 2\mathbb T $, when $ n = 2 $, and $ x_1, x_2 $ for $ x = (x_1,x_2)^\top \in 2\mathbb T^2 $, when $ n = 3 $; the notation $ \partial_z $ to denote the spatial derivative in the vertical direction; the notation $ \partial_t $ to denote the temporal derivative; $ u_s $ for $ s \in \lbrace t, x, z \rbrace $ is short for $ \partial_s u $; also $ u_{s_1s_2} $ and $ \partial_{s_1s_2} u $ for $ s_1, s_2 \in \lbrace t, x,z \rbrace $ are short for $  \partial_{s_1} \partial_{s_2} u $. $ \dvh, \nabla_h, \Delta_h $ are used to denote the divergence, the gradient, the Laplace, respectively, in horizontal direction, i.e. \begin{gather*}
	\dvh = \dx, ~~ \text{when} ~ n = 2, ~~\text{and}~~ \dvh =  \nabla_h \cdot, ~~ \text{when} ~ n = 3, \\
	\nabla_h = \dx, ~~ \text{when} ~ n = 2, ~~\text{and}~~ \nabla_h = \biggl( \begin{array}{c} \partial_{x_1} \\ \partial_{x_2} \end{array} \biggr), ~~ \text{when} ~ n = 3,\\
	\Delta_h = \partial_{xx}, ~~ \text{when} ~ n =2, ~~ \text{and} ~~ \Delta_h = \dvh \nabla_h, ~~ \text{when} ~ n = 3.
\end{gather*}
In addition, we abuse the notation:
\begin{gather*}
	\int \cdot \idx = \int_{\Omega} \cdot \idx, ~~ \text{or} ~ \int \cdot \idx = \int_{2\mathbb{T}^n} \cdot \idx, ~~ \text{depending on the context}.
\end{gather*}
$ \Lnorm{p}, \Hnorm{k} $ are used to denote $ \Lnorm{p}(\Omega), \Hnorm{k}(\Omega) $ or $ \Lnorm{p}(2\mathbb T^n), \Hnorm{k}(2\mathbb T^n) $, depending on the context.

Also, we summarize the symmetric-periodic extensions in the following:
\begin{definition}[Symmetric-periodic extensions]\label{def:extension}
	For any smooth function $ f $ defined in $ \Omega := 2 \mathbb{T}^{n-1} \times (0,1) $, one can extend it to an even function in $ \Omega_{\pm} := 2 \mathbb{T}^{n-1} \times (-1,1) $, using the even-symmetric extension $ \mathfrak E_s^+ $, defined by
	\begin{equation}\label{symmetric-extension}
		\mathfrak{E}_s^+ f(x,z) : = \begin{cases} f(x,|z|), & x \in 2 \mathbb T^{n-1}, z \in (-1,1) \backslash \lbrace 0 \rbrace, \\
		\lim\limits_{z\rightarrow 0^+} f(x,z), & x \in 2 \mathbb{T}^{n-1}, z= 0.
		\end{cases}
	\end{equation}
	In addition, if $ \lim\limits_{z\rightarrow 0^+} f(x,z) = 0 $, one can also extend $ f $ to a odd function in $ \Omega_{\pm} 
	$, using the odd-symmetric extension $ \mathfrak E_s^- $, defined by
	\begin{equation}\label{symmetric-extension-odd}
		\mathfrak{E}_s^- f(x,z) : = \begin{cases} \dfrac{z}{|z|}f(x,|z|), & x \in 2 \mathbb T^{n-1}, z \in (-1,1)\backslash\lbrace 0 \rbrace, \\
		0, & x \in 2\mathbb T^{n-1}, z = 0 .\end{cases}
	\end{equation}
	
	Also, for any smooth function $ g $ defined in $ \Omega_{\pm} $, one can extend it to a function in $ 2 \mathbb T^n $ using the periodic extension $ \mathfrak E_p $, defined by
	\begin{equation}\label{periodic-extension}
		\begin{aligned}
		& \mathfrak E_p g(x,z) : = \begin{cases} g(x, z - 2k), & (x,z) \in 2 \mathbb T^{n} \backslash \lbrace z \in 1 + 2 \mathbb Z \rbrace,\\
		\dfrac{1}{2} \bigl( \lim\limits_{z\rightarrow 1^-}g(x,z) + \lim\limits_{z \rightarrow -1^+}g(x,z) \bigr) & x \in 2 \mathbb T^{n-1}, z\in 1 + 2\mathbb Z, \end{cases}\\
		& ~~~~ ~~~~ k ~ \text{is the integer such that $ z - 2k \in (-1,1) $. }\end{aligned}
	\end{equation}
	Then the even-symmetric-periodic extension operator is defined by
	\begin{equation}\label{symmetric-periodic-extension}
		\mathfrak E_{sp}^+ f := \mathfrak E_{p} \mathfrak E_{s}^+ f, ~~~~ f ~ \text{is a function defined in $\Omega$.}
	\end{equation}
	The odd-symmetric-periodic extension operator is defined by
	\begin{equation}\label{symmetric-periodic-extension}
		\mathfrak E_{sp}^- f := \mathfrak E_{p} \mathfrak E_{s}^- f , ~~~~ f ~ \text{is a function defined in $\Omega$ and $ f(0) = 0 $.}
	\end{equation}
\end{definition}

To study \eqref{eq:2d-anel-NS} in the case of physical vacuum, i.e., \eqref{density-profile-physical-vacuum},
we will need to apply the following Hardy-type inequalities:
\begin{lm}[Hardy-type inequalities]\label{lm:hardy-type-ineq} Let $ k \neq -1 $ be a real number. Suppose that a function $ f \in C^1([0,1]) $ satisfies $ \int_0^1  (z+\varepsilon)^{k+2} (|f|^2(z) + |f'|^2(z)) \,dz < \infty $. Then
for some positive constant $ C_k \in (0,\infty) $, independent of $ \varepsilon \in (0,1) $, one has
	\begin{enumerate}
		\item for $ k > - 1 $,
		\begin{equation}\label{hardy-1}
			\int_0^1 (z+\varepsilon)^k |f(z)|^2 \,dz \leq C_k \int_0^1 (z+\varepsilon)^{k+2} (|f(z)|^2 + |f'(z)|^2) \,dz;
		\end{equation}
		\item for $ k < -1 $,
		\begin{equation}\label{hardy-2}
			\int_0^1 (z+\varepsilon)^k |f(z) - f(0)|^2 \,dz \leq C_k \int_0^1 (z+\varepsilon)^{k+2}|f'(z)|^2 \,dz.
		\end{equation}
	\end{enumerate}
	In particular, after taking $ \varepsilon = 0 $ in \eqref{hardy-1} and \eqref{hardy-2}, one will arrive at the standard Hardy's inequalities.
\end{lm}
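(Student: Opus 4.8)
The plan is to establish the two inequalities by the classical approach: multiply by a suitable test weight, integrate by parts, and absorb a boundary term. For both cases the key identity comes from differentiating a product of the form $(z+\varepsilon)^{k+1}|f(z)-c|^2$ (with $c=0$ in case \eqref{hardy-1} and $c=f(0)$ in case \eqref{hardy-2}), where the choice $k+1$ of the exponent is dictated by the requirement that the derivative of the weight produce exactly $(z+\varepsilon)^k$. Explicitly, I would start from
\begin{equation*}
	\frac{d}{dz}\Bigl[(z+\varepsilon)^{k+1}|f(z)-c|^2\Bigr] = (k+1)(z+\varepsilon)^k |f(z)-c|^2 + 2(z+\varepsilon)^{k+1}(f(z)-c)f'(z),
\end{equation*}
and integrate over $(0,1)$. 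Solving for $\int_0^1 (z+\varepsilon)^k|f-c|^2\,dz$ gives
\begin{equation*}
	(k+1)\int_0^1 (z+\varepsilon)^k |f-c|^2 \,dz = \Bigl[(z+\varepsilon)^{k+1}|f-c|^2\Bigr]_0^1 - 2\int_0^1 (z+\varepsilon)^{k+1}(f-c)f'\,dz.
\end{equation*}

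For case \eqref{hardy-1} ($k>-1$), I take $c=0$. The boundary term at $z=1$ is $(1+\varepsilon)^{k+1}|f(1)|^2 \geq 0$ and the boundary term at $z=0$ is $\varepsilon^{k+1}|f(0)|^2 \geq 0$ with the correct sign (since $k+1>0$, so $\varepsilon^{k+1}$ is well-defined and the term at $z=0$ is subtracted, hence helps — actually I must be careful: it is $-\varepsilon^{k+1}|f(0)|^2 \le 0$, which is favorable). One then bounds the $z=1$ boundary term by $\int_0^1 (z+\varepsilon)^{k+2}(|f|^2+|f'|^2)\,dz$ up to a constant — for instance by writing $(1+\varepsilon)^{k+1}|f(1)|^2 \le C\int_0^1 \frac{d}{dz}[(z+\varepsilon)^{k+2}|f|^2]\,dz + C\varepsilon^{k+2}|f(0)|^2$ and noting $(z+\varepsilon)^{k+2}\ge \varepsilon^{k+2}$ so the last term is itself controlled — or, more simply, by a standard one-dimensional trace/interpolation estimate $(1+\varepsilon)^{k+1}|f(1)|^2 \le C\int_0^1 (z+\varepsilon)^{k+2}(|f|^2+|f'|^2)\,dz$ valid uniformly in $\varepsilon\in(0,1)$ because the weight $(z+\varepsilon)^{k+2}$ is bounded below and above by positive constants on a neighborhood of $z=1$. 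For the remaining cross term, apply Cauchy–Schwarz: $2|(f-c)f'|(z+\varepsilon)^{k+1} \le \eta (z+\varepsilon)^k|f-c|^2 + \eta^{-1}(z+\varepsilon)^{k+2}|f'|^2$, choose $\eta$ small relative to $|k+1|$ so the first piece is absorbed into the left-hand side, and collect terms. For case \eqref{hardy-2} ($k<-1$), I take $c=f(0)$; now $k+1<0$, so dividing flips signs, and the key point is that the boundary term at $z=0$ is $\varepsilon^{k+1}|f(0)-f(0)|^2 = 0$ exactly — this is why subtracting $f(0)$ is essential when $k+1<0$, since otherwise $\varepsilon^{k+1}\to\infty$. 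The $z=1$ term $(1+\varepsilon)^{k+1}|f(1)-f(0)|^2$ has a sign that must be checked after dividing by the negative quantity $k+1$; since it appears as $\frac{1}{k+1}(1+\varepsilon)^{k+1}|f(1)-f(0)|^2 \le 0$, it can simply be dropped, leaving only the cross term to handle by Cauchy–Schwarz exactly as before (with $|f'|^2$ on the right, matching the statement of \eqref{hardy-2}, which has no $|f|^2$ term).

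The main obstacle — really the only delicate point — is the uniformity of the constant $C_k$ in $\varepsilon$, and ensuring all boundary contributions at $z=0$ either vanish or carry the favorable sign. In case \eqref{hardy-1} the $z=0$ boundary term is automatically favorable, so nothing is needed there; in case \eqref{hardy-2} it vanishes identically by the subtraction of $f(0)$, which is exactly why the hypothesis is phrased with $f(z)-f(0)$. The only genuinely non-automatic estimate is the control of the $z=1$ boundary term in case \eqref{hardy-1}, and here uniformity in $\varepsilon$ is immediate because $1\le z+\varepsilon\le 2$ near $z=1$ makes the weight uniformly comparable to a constant. Finally, taking $\varepsilon\to 0$ and invoking the finiteness hypothesis together with monotone/dominated convergence recovers the classical (unshifted) Hardy inequalities as claimed.
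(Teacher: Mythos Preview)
Your argument is correct and complete. Both inequalities follow cleanly from the integration-by-parts identity on $(z+\varepsilon)^{k+1}|f-c|^2$, the boundary terms behave exactly as you describe, and the trace control of $|f(1)|^2$ via the weight being uniformly comparable to a constant on $[1/2,1]$ is the right way to secure uniformity in~$\varepsilon$.

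The paper proves the same lemma by a different, though closely related, route. Rather than integrating by parts directly on the weighted product, it writes $|f(z)|^2$ via the Fundamental Theorem of Calculus as an integral of $2ff'$ (from a mean-value point $z^*\in[1/2,1]$ in case~\eqref{hardy-1}, from $0$ in case~\eqref{hardy-2}), multiplies by $(z+\varepsilon)^k$, and then uses Fubini to swap the order of integration before applying Cauchy--Schwarz or Young's inequality. Your approach is the textbook Hardy-inequality proof and is somewhat more direct: in particular, for \eqref{hardy-2} it yields the explicit constant $4/(k+1)^2$ in one line, and it makes transparent why the subtraction of $f(0)$ is forced when $k+1<0$. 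The paper's approach avoids any discussion of boundary terms by building the vanishing at the appropriate endpoint into the FTC representation itself; the mean-value trick in case~\eqref{hardy-1} plays the same role as your trace estimate at $z=1$. Neither approach dominates the other---they are two standard proofs of the same classical fact.
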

\begin{proof}
{\bf\noindent Inequality \eqref{hardy-1}: $ k > -1 $.} The mean value theorem guarantees that there is a $ z^* \in [1/2,1] $ such that $ 2 |f(z^*)|^2 \leq \int_{1/2}^1 |f(\xi)|^2 \,d\xi \leq 2^{k+2} \int_{1/2}^1 (\xi+\varepsilon)^{k+2} |f(\xi)|^2 \,d\xi $. Then applying the Fundamental Theorem of Calculus and the Fubini's theorem yields, since $ k + 1 > 0 $,
\begin{align*}
	& \int_0^1 (z+\varepsilon)^k |f(z)|^2 \,dz \lesssim \int_0^1 (z+\varepsilon)^k  \bigl(|\int_{z^*}^z f(\xi) f'(\xi) \,d\xi| + |f(z^*)|^2 \bigr) \,dz\\
	& ~~ \lesssim \int_0^1 (z+\varepsilon)^k \int_{z}^1 |f(\xi)|  |f'(\xi)| \,d\xi  \,dz + \int_0^1(z+\varepsilon)^k \,dz \\
	& ~~~~ \times  \int_{1/2}^1 (\xi+\varepsilon)^{k+2} |f(\xi)|^2 \,d\xi  = \int_0^1 \int_0^{\xi} (z+\varepsilon)^k |f(\xi)| |f'(\xi)|    \,dz \,d\xi \\
	& ~~~~ + \dfrac{1}{k+1}((1+\varepsilon)^{k+1} - \varepsilon^{k+1} ) \int_{1/2}^1 (\xi+\varepsilon)^{k+2} |f(\xi)|^2 \,d\xi \\
	& ~~ \lesssim \int_0^1 (\xi+\varepsilon)^{k+1}|f(\xi)| | f'(\xi)| \,d\xi + \int_0^1 (\xi+\varepsilon)^{k+2} |f(\xi)|^2 \,d\xi \\
	& ~~ \lesssim \delta \int_0^1(\xi+\varepsilon)^k |f(\xi)|^2 \,d\xi + C_\delta \int_0^1 (\xi+\varepsilon)^{k+2} |f'(\xi)|^2 \,d\xi\\
	& ~~~~ + \int_0^1 (\xi+\varepsilon)^{k+2} |f(\xi)|^2 \,d\xi,
\end{align*}
where $ \delta> 0 $ is an arbitrary constant and $ C_\delta = {1}/{\delta} $. Then after choosing $ \delta $ small enough, this finishes the proof of \eqref{hardy-1}.
{\par\bf\noindent Inequality \eqref{hardy-2}: $ k < -1 $.} Without loss of generality, we assume $ f(0) = 0 $. Then, again, the Fundamental Theorem of Calculus implies that $ |f(z)|^2 = \int_0^z 2 f(\xi) f'(\xi) \,d\xi $. Thus, since $ k + 1 < 0 $,
\begin{align*}
	& \int_0^1 (z+ \varepsilon)^k |f(z)|^2 \,dz \lesssim \int_0^1 (z+\varepsilon)^k \int_0^z |f(\xi)||f'(\xi)|\,d\xi \,dz\\
	& ~~~~ = \int_0^1 \int_{\xi}^1 (z+ \varepsilon)^k |f(\xi)||f'(\xi)| \,dz \,d\xi \\
	& ~~~~ \lesssim \int_0^1   (\xi+ \varepsilon)^{k+1} |f(\xi)||f'(\xi)| \,d\xi \lesssim \bigl(\int_0^1 (\xi+\varepsilon)^k |f(\xi)|^2 \,d\xi \bigr)^{1/2} \\
	& ~~~~ \times \bigl(\int_0^1 (\xi+\varepsilon)^{k+2} |f'(\xi)|^2 \,d\xi \bigr)^{1/2}.
\end{align*}
Thus \eqref{hardy-2} follows.
\end{proof}

\section{The non-degenerate case}\label{sec:local-solutions-nonsingular}

Recall that our goal is to construct the local strong solutions to the anelastic Navier-Stokes equations, \eqref{eq:2d-anel-NS}, i.e.,
\begin{equation}\label{eq:nonsingular-anel-NS}
    \begin{cases}
       \rho (\dt u + u \cdot \nabla u) + \rho \nabla p = \Delta u & \text{in} ~ \Omega, \\
        \dv (\rho u) = 0 & \text{in} ~ \Omega,
    \end{cases}
\end{equation}
with \eqref{bc:2d-anel-NS} in the case of non-degenerate background density profiles, i.e., \eqref{density-profile-smooth}. In fact, we will only need $ \inf_{\Omega}\rho > 0 $ and
\begin{equation}\label{smooth-density}
\mathfrak E_{sp}^+ \rho \in C^3(2\mathbb T^n).
\end{equation}
Recall that, $ \Omega = 2\mathbb T^{n-1} \times (0,1) $, $ n = 2,3 $.

Our strategy of constructing solutions is: first, we introduce a problem in $ 2 \mathbb T^{n} $, which is associated with \eqref{eq:nonsingular-anel-NS}; then we construct the solutions with enough regularity to the associated problem; by restricting such solutions to the associated problem back in $ \Omega $, we obtain the required solutions to \eqref{eq:nonsingular-anel-NS}.

The following theorem is the main part of this section:
\begin{thm}\label{prop:local-exist-nonsingular}
Let $ \rho $ be a strict positive scalar function in $ \Omega $ that satisfies \eqref{smooth-density}. Consider initial data $ u_{in} \in H^2 $ satisfying \eqref{in:2d-anel-NS} and \eqref{in:compatibility}. Then there exists a unique strong solution $ (u, p) $ to \eqref{eq:nonsingular-anel-NS}, with \eqref{bc:2d-anel-NS}, in $ [0,T^*] $, for some $ T^* \in(0,\infty) $. Moreover, the strong solution satisfies the following regularity:
\begin{gather*}
	u \in L^\infty (0,T^*; H^2(\Omega))\cap L^2(0,T^*;H^3(\Omega)),	\\
	\dt u \in L^\infty(0,T^*; L^2(\Omega))\cap L^2(0,T^*;H^1(\Omega)),\\
	\nabla p \in L^\infty (0,T^*; L^2(\Omega))\cap L^2(0,T^*;H^1(\Omega)).
\end{gather*}
	 Furthermore, the following estimates hold:
\begin{equation}\label{nonsingular-regularity}
	\begin{aligned}
		& \sup_{0\leq t \leq T^*} \bigl( \norm{u(t)}{\Hnorm{2}}^2 + \norm{u_t(t)}{\Lnorm{2}}^2 + \norm{\nabla p(t)}{\Lnorm{2}}^2 \bigr) \\
		& ~~~~ + \int_0^{T^*} \bigl( \norm{u(t)}{\Hnorm{3}}^2 + \norm{u_t(t)}{\Hnorm{1}}^2 + \norm{\nabla p(t)}{\Hnorm{1}}^2 \bigr)\,dt \leq C_{in,\rho},
	\end{aligned}
\end{equation}
where $ C_{in,\rho} \in (0,\infty)  $ depends only on the initial data $ u_{in} $ and $$ \inf_{\vec{x}\in\Omega}{\rho(\vec{x})}, ~ \norm{\rho}{C^{3}(\overline{\Omega})} \in (0,\infty) .$$
Also, one can choose $ p $ to satisfy
\begin{equation}\label{ellip:p-1}
	\dz p\big|_{z = 0,1} = 0, ~~ \text{and} ~~  \int_\Omega p \idx =0.
\end{equation}
In addition, let $ u_1, u_2 $ be strong solutions with initial data $ u_{1,in}, u_{2,in} $, respectively. Then the following estimate holds,
\begin{equation}\label{stability-est-u}
	\begin{gathered}
	\sup_{0\leq t \leq T^*}\norm{u_{1}(t)-u_2(t)}{\Lnorm{2}}^2 + \int_0^{T^*} \norm{\nabla (u_{1}(t) - u_2(t))}{\Lnorm{2}}^2 \,dt \\
	~~~~ ~~~~ \leq C_{in,1,2,T^*} \norm{u_{in,1} - u_{in,2}}{\Lnorm{2}}^2,
	\end{gathered}
\end{equation}
where $ T^* \in(0,\infty) $ is the co-existence time of the solutions, and $ C_{in,1,2,T^*} \in (0,\infty) $ depends on the initial data and $ T^* $.
\end{thm}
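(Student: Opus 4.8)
The plan is to work on the symmetric-periodic extension of the problem, so that boundary conditions become automatic. First I would apply the even-symmetric-periodic extension $\mathfrak E_{sp}^+$ to $\rho$ (using \eqref{smooth-density}), to $p$, to $w_{in}$ via the odd extension $\mathfrak E_{sp}^-$ (since $w_{in}|_{z=0,1}=0$), and to $v_{in}$ via the even extension; because $\partial_z v_{in}|_{z=0,1}=0$ and $w_{in}|_{z=0,1}=0$, the extended data lie in $H^2(2\mathbb T^n)$, the extended $\rho$ lies in $C^3(2\mathbb T^n)$ and stays bounded below, and the anelastic constraint $\dv(\rho u_{in})=0$ is preserved. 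This reduces matters to constructing a strong solution of \eqref{eq:nonsingular-anel-NS} on $2\mathbb T^n$ with no boundary, whose restriction to $\Omega$ then has the required parity, hence satisfies \eqref{bc:2d-anel-NS}. The pressure will be recovered from the elliptic problem $\dv(\rho\nabla p)=\dv\Delta u-\dv(\rho u\cdot\nabla u)$, which is uniquely solvable up to a constant on $2\mathbb T^n$ because $\rho$ is bounded below; normalizing $\int p=0$ and noting the equation respects even parity gives \eqref{ellip:p-1}.

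Next I would set up a Galerkin scheme adapted to the weighted divergence constraint: take the eigenfunctions of the operator $u\mapsto -\rho^{-1}\Delta u$ (or of $-\Delta$) projected onto the space $\{u:\dv(\rho u)=0\}$ via the weighted Leray projector $\mathbb P_\rho$ associated with the decomposition $u=u_{\rho,\sigma}+\nabla\phi$ orthogonal in $L^2(\rho\,d\vec x)$. On the finite-dimensional space the approximate system is a locally Lipschitz ODE, solvable short-time. The heart of the argument is then a sequence of a priori estimates, uniform in the Galerkin dimension, closing on a short time interval $[0,T^*]$ whose length depends only on $\|u_{in}\|_{H^2}$, $\inf\rho$ and $\|\rho\|_{C^3}$: (i) the basic energy estimate pairing the momentum equation with $u$ to get $\rho^{1/2}u\in L^\infty L^2$, $\nabla u\in L^2 L^2$; (ii) differentiating in $t$ and pairing with $u_t$ to control $\rho^{1/2}u_t\in L^\infty L^2$, $\nabla u_t\in L^2 L^2$, using the compatibility condition \eqref{in:compatibility} to bound the data $\|\rho^{1/2}u_{in,1}\|_{L^2}$; (iii) elliptic regularity: from $\Delta u=\rho u_t+\rho u\cdot\nabla u+\rho\nabla p$ and the elliptic estimate for $p$ in terms of $u$ and $u_t$, bootstrap to $u\in L^\infty H^2\cap L^2 H^3$ and $\nabla p\in L^\infty L^2\cap L^2 H^1$. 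Here the interaction terms $\nabla(\rho\nabla p)$, $\rho u\cdot\nabla u$ and the commutators of $\Delta$ with $\rho^{-1}$ must be handled carefully; the $C^3$ bound on $\rho$ is exactly what is needed to absorb the worst term $(\nabla^2\rho)\nabla p$ when estimating $\|u\|_{H^3}$.

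The main obstacle, as the introduction flags, is that the "divergence-free" structure is $\rho$-weighted, so standard Navier-Stokes identities (e.g. $\int u\cdot\nabla u\cdot u=0$) fail and are replaced by $\int \rho u\cdot\nabla u\cdot u=0$ together with error terms generated whenever a bare derivative hits the constraint; controlling these requires either commuting $\rho$ through or testing against $\rho$-weighted test functions, and in the $H^2$/$H^3$ estimates one cannot avoid terms where derivatives of $\rho$ multiply top-order derivatives of $p$. I would control the pressure gradient in higher norms via the elliptic equation $\dv(\rho\nabla p)=\cdots$ rewritten as $\Delta p=-\rho^{-1}\nabla\rho\cdot\nabla p+\rho^{-1}(\dv\Delta u-\dv(\rho u\cdot\nabla u))$ and iterate, using $\inf\rho>0$. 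Once the uniform bounds are in hand, I pass to the limit in the Galerkin parameter (Aubin–Lions for the compactness needed on the nonlinearity and on $\rho\nabla p$), obtaining a strong solution on $2\mathbb T^n$; restricting to $\Omega$ and reading off parities gives \eqref{bc:2d-anel-NS} and the claimed regularity with constant $C_{in,\rho}$. Finally, for uniqueness and \eqref{stability-est-u}: given two solutions $u_1,u_2$ with pressures $p_1,p_2$, set $U=u_1-u_2$, $P=p_1-p_2$; it satisfies $\rho(U_t+u_1\cdot\nabla U+U\cdot\nabla u_2)+\rho\nabla P=\Delta U$ and $\dv(\rho U)=0$; pair with $U$, use $\int\rho u_1\cdot\nabla U\cdot U=0$ and $\int\rho\nabla P\cdot U=0$, and bound $\int\rho U\cdot\nabla u_2\cdot U$ by $\|\nabla u_2\|_{L^\infty}\|\rho^{1/2}U\|_{L^2}^2\lesssim \|u_2\|_{H^3}\|\rho^{1/2}U\|_{L^2}^2$ (in $n=3$, via interpolation $\|\nabla u_2\|_{L^\infty}\lesssim\|u_2\|_{H^2}^{1/2}\|u_2\|_{H^3}^{1/2}$, which is $L^2$ in time), and Grönwall with the $L^2(0,T^*;H^3)$ bound closes the estimate, giving both \eqref{stability-est-u} and uniqueness.
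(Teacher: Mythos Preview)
Your proposal is correct and follows essentially the same architecture as the paper: symmetric-periodic extension to $2\mathbb T^n$, Galerkin approximation, uniform a~priori estimates (energy, time-derivative, then elliptic/pressure bootstrap to $H^2$--$H^3$), Aubin--Lions compactness, restriction back to $\Omega$, and an $L^2$ stability argument for uniqueness.

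The only noteworthy technical differences are in implementation. For the Galerkin step you propose eigenfunctions of a weighted Stokes-type operator together with the weighted Leray projector $\mathbb P_\rho$; the paper instead uses the explicit trigonometric basis respecting the parity \eqref{SYM} and imposes only the projected constraint $\dv(\mathbb P_m(\rho u_m))=0$, solving an explicit finite linear system for the pressure coefficients. Both routes work: the paper's is more elementary and avoids building the weighted projector, while yours keeps the exact anelastic constraint at every level; in either case the key cancellations $\int\rho\nabla p_m\cdot u_m=0$ and $\int\rho u_m\cdot\nabla u_m\cdot u_m=0$ survive. For uniqueness you bound $\int\rho U\cdot\nabla u_2\cdot U$ via $\|\nabla u_2\|_{L^\infty}$ and close with the $L^2_t H^3$ bound on $u_2$; the paper instead uses $\|\nabla u_2\|_{L^2}\|U\|_{L^4}^2$ and Ladyzhenskaya/Sobolev, which needs only $u_2\in L^\infty_t H^1$ but requires absorbing $\tfrac12\|\nabla U\|_{L^2}^2$ on the left. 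Either closes Gr\"onwall. One small caution: your sentence ``because $\partial_z v_{in}|_{z=0,1}=0$ and $w_{in}|_{z=0,1}=0$, the extended data lie in $H^2(2\mathbb T^n)$'' is not literally enough for the odd extension of $w_{in}$ to be $H^2$; the paper secures this via the compact-support assumption in \eqref{in:2d-anel-NS} (cf.\ Remark~\ref{rm:compatible-condition-2}), which you are implicitly using.
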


\begin{remark}\label{rm:density-profile}
	We observe that conditions \eqref{in:compatibility} and \eqref{smooth-density} are essential factors in Theorem \ref{prop:local-exist-nonsingular}. See Remark \ref{rm:approximating-density} and Remark \ref{rm:uniqueness}, below.
\end{remark}

In fact, we will only show the proof of Theorem \ref{prop:local-exist-nonsingular} when $ n = 2 $. The case when $ n = 3 $ is similar and we omit it for the sake of clarity of our presentation.
The proof is done in the following steps: introducing the associated problem via the symmetric-periodic extension; introducing the Galerkin approximating problem; establishing existence of strong solutions; improving the regularity; establishing uniqueness and continuous dependency on the initial data.

\vspace{0.25cm}
{\par\noindent\bf Step 0:} the associated problem. We observe that
system \eqref{eq:nonsingular-anel-NS} is invariant with respect to the following symmetry:
\begin{equation}\tag{SYM}
\label{SYM}
\begin{gathered}
\text{$ \rho, v, w, p $ are even, even, odd, even, respectively,} \\
\text{with respect to the $z$-variable.}
\end{gathered}
\end{equation}
Recalling $ \Omega = 2\mathbb T \times (0,1) $,
that is to say,
by extending any solution $ (\rho, v, w, p) $ to system \eqref{eq:nonsingular-anel-NS} to
\begin{equation*}
		\rho_{\pm} : = \mathfrak E_{s}^+ \rho, ~ v_{\pm} : = \mathfrak E_{s}^+ v, ~
		w_{\pm} : = \mathfrak E_{s}^- w, ~ p_{\pm} : = \mathfrak E_{s}^+ p,
\end{equation*}
$ (\rho_\pm, v_\pm, w_\pm, p_\pm) $ satisfies the same equations as in system \eqref{eq:nonsingular-anel-NS} in domain $ \Omega_\pm := 2\mathbb T \times (-1,1) $.

Then the new system for the extended functions $ (\rho_{\pm}, v_{\pm}, w_{\pm}, p_{\pm} ) $
is invariant with respect to translation $ z \leadsto z \pm 2 $. Thus, we further extend $ (\rho_{\pm}, v_{\pm}, w_{\pm}, p_{\pm} ) $ periodically in the $ z $-variable by applying $ \mathfrak E_p $ to $ (\rho_{\pm}, v_{\pm}, w_{\pm}, p_{\pm} ) $. Combining these two extensions together, we obtain,
\begin{equation*}
		\rho^* : = \mathfrak E_{sp}^+ \rho, ~ v^* : = \mathfrak E_{sp}^+ v, ~
		w^* : = \mathfrak E_{sp}^- w, ~ p^* : = \mathfrak E_{sp}^+ p,
\end{equation*}
and $ (\rho^*, v^*, w^*, p^*) $  satisfies the same equations as in system \eqref{eq:nonsingular-anel-NS} in domain $ 2\mathbb T^2 $.

Therefore, we end up with the same set of equations as in system \eqref{eq:nonsingular-anel-NS} in  periodic domain $ 2 \mathbb T^2 $, and for simplicity, the same notations $ \rho, v, w, p $ are used to denote $ \rho^*, v^*, w^*, p^*$. Such a convention will be adopted in the following. Then we have got the following system,
\begin{equation*}\tag{\ref{eq:nonsingular-anel-NS}'} \label{eq:periodic-anel-NS}
    \begin{cases}
       \rho (\dt u + u \cdot \nabla u) + \rho \nabla p = \Delta u & \text{in} ~ 2 \mathbb T^2, \\
        \dv (\rho u) = 0 & \text{in} ~ 2 \mathbb T^2,
    \end{cases}
\end{equation*}
with symmetry \eqref{SYM}.
We adopt initial data $ (\mathfrak E_{sp}^+v_{in}, \mathfrak E_{sp}^-w_{in})^\top $ for \eqref{eq:periodic-anel-NS}, which we will denote by the same notation as the original initial data, i.e.,  $ (v_{in}, w_{in} )^\top $.
Notice, the boundary conditions in \eqref{bc:2d-anel-NS} are automatically implied by symmetry \eqref{SYM}. In the next step, a Galerkin approximating procedure will be used to construct solutions to \eqref{eq:periodic-anel-NS}.

\vspace{0.25cm}
{\par\noindent\bf Step 1:} the Galerkin approximating problem.
Given any non-negative integer $ m $, we consider the finite dimensional space, denoted by $ X_m $ and defined as follows:
\begin{equation}\label{def:finite-dim-space}
	\begin{aligned}
		& X_m:=  \bigl\lbrace (v_m, w_m, p_m) | v_m = \sum_{\mathbf k \in \mathbb Z_m} a^v_{\mathbf{k}} e^{\pi i k_1 x} \cos (\pi k_2 z), \\
		& ~~ w_m =  \sum_{\mathbf k \in \mathbb Z_m} a^w_{\mathbf{k}} e^{\pi i k_1 x} \sin (\pi k_2 z), ~
		p_n = \sum_{\mathbf k \in \mathbb Z_m \backslash \lbrace (0,0) \rbrace } b_{\mathbf{k}} e^{\pi i k_1 x} \cos (\pi k_2 z),
		\\
		&  ~~ \text{with $ a_{\mathbf k}^v, a_{\mathbf k}^w, b_{\mathbf k} $ being complex-valued scalar functions of $ t $ only and}\\
		& ~~ \text{satisfying the reality condition, i.e.,} \\
		& ~~
		 a_{(k_1,k_2)}^v = \overline{a_{(-k_1,k_2)}^v},
		a_{(k_1,k_2)}^w = \overline{a_{(-k_1,k_2)}^w},
		b_{(k_1,k_2)} = \overline{b_{(-k_1,k_2)}}
		\bigr\rbrace, \\
		& \text{where} ~
		\mathbb Z_m
		  : = \lbrace \mathbf k = (k_1,k_2) \in \mathbb Z \times \mathbb Z,  -m \leq k_1 \leq m,
		  0\leq  k_2 \leq m \rbrace
		\bigr\rbrace.
	\end{aligned}
\end{equation}
Notice that, the dimension of $ X_m $ over $ \mathbb R $ is $ 3 (2m+1) (m+1) - 1 $. Also, we define the lower-$m^{th}$-frequency projection operator $ \mathbb P_m $, $ m \geq 0 $, as follows.
\begin{equation}\label{def:projection}
	\begin{aligned}
	& \text{Given} ~ f = \sum_{\mathbf k \in \mathbb Z \times \mathbb Z} c_{\mathbf k} e^{\pi ik_1 x +  \pi i k_2 z}, ~ \text{with $c_{\mathbf k} $ being complex-valued} \\
	&  \text{scalar functions of $ t $}, ~
	 \mathbb P_m f := \sum_{\mathbf k \in \mathbb Z_m^\pm} c_{\mathbf k} e^{\pi ik_1 x +  \pi i k_2 z}, ~ \text{where} \\
	& \mathbb Z_m^\pm
		  : = \lbrace \mathbf k = (k_1,k_2) \in \mathbb Z \times \mathbb Z,  -m \leq k_1 \leq m,
		  -m \leq  k_2 \leq m \rbrace.
	\end{aligned}
\end{equation}
Then $ \mathbb P_m $ projects $ (v, w, p) $ with symmetry \eqref{SYM} into $ X_m $ via $ \mathbb P_m  (v, w, p) = ( \mathbb P_m v, \mathbb P_m w, \mathbb P_m p) $, where we have taken $ \int_\Omega p \idx = 0 $.

 Consider any non-negative integer $ m $ and $ (v_m, w_m, p_m) \in X_m $ with $ a_{\mathbf k}^v, a_{\mathbf k}^w, b_{\mathbf k} $ given as in \eqref{def:finite-dim-space}. To solve the problem \eqref{eq:periodic-anel-NS}, we consider the following system of ODE:
 \begin{equation}\label{eq:ODE-m}
 	\begin{cases}
 		\mathbb P_m \bigl\lbrack \rho (\dt v_m + v_m \dx v_m + w_m \dz v_m )  + \rho \dx p_m\bigr\rbrack = \Delta v_m, \\
 		\mathbb P_m \bigl\lbrack \rho (\dt w_m + v_m \dx w_m + w_m \dz w_m )  + \rho \dz p_m \bigr\rbrack = \Delta w_m, \\
 		\dx \mathbb P_m (\rho v_m) + \dz \mathbb P_m (\rho w_m) = 0.
 	\end{cases}
 \end{equation}
To find a solution $ \lbrace (a_{\mathbf k}^v(t), a_{\mathbf k}^w(t), b_{\mathbf k}(t))_{t \in (0,T^*)} \rbrace $, with $ \mathbf k \in \mathbb Z_m $, for some $ T^* \in (0,\infty) $ to \eqref{eq:ODE-m}, we will need to reformulate \eqref{eq:ODE-m} into a system of dimension $ 3(2m+1)(m+1)- 1 $.
In fact, we claim that $ \lbrace b_{\mathbf k} \rbrace $ can be represented as functions of $ \lbrace (a_{\mathbf k}^v(t), a_{\mathbf k}^w(t)) \rbrace $ by inverting a linear algebraic system of dimension $ (2m+1)(m+1) - 1 $, and one can derive a first-order ODE system for $ \lbrace (a_{\mathbf k}^v(t), a_{\mathbf k}^w(t)) \rbrace $ of dimension $ 2(2m+1)(m+1)  $.

Taking $ \dx $ and $ \dz $ to \subeqref{eq:ODE-m}{1} and \subeqref{eq:ODE-m}{2}, respectively, and summing the results together yield, using \subeqref{eq:ODE-m}{3},
\begin{equation*}
	\begin{aligned}
		& \dx \mathbb P_m (\rho \dx p_m) + \dz \mathbb P_m (\rho \dz p_m) = \dx \Delta v_m + \dz \Delta w_m \\
		& ~~~~ - \dx \mathbb P_m \bigl\lbrack \rho( v_m \dx v_m + w_m \dz v_m) \bigr\rbrack  - \dz \mathbb P_m \bigl\lbrack \rho( v_m \dx w_m + w_m \dz w_m) \bigr\rbrack,
	\end{aligned}
\end{equation*}
which is, due to the even symmetry and the strict positivity of $ \rho $, a non-singular linear system with the unknowns $ \lbrace b_{\mathbf k} \rbrace $ of dimension $ (2m+1)(m+1) - 1 $. Thus after solving for $ \lbrace b_{\mathbf k} \rbrace $, \eqref{eq:ODE-m} can be written as the following $ 3(2m+1)(m+1) - 1 $ dimensional system,
\begin{equation}\label{eq:ODE-m-2}
 	\begin{cases}
 		\mathbb P_m \bigl\lbrack \rho (\dt v_m + v_m \dx v_m + w_m \dz v_m )  + \rho \dx p_m\bigr\rbrack = \Delta v_m, \\
 		\mathbb P_m \bigl\lbrack \rho (\dt w_m + v_m \dx w_m + w_m \dz w_m )  + \rho \dz p_m \bigr\rbrack = \Delta w_m, \\
 		\dx \mathbb P_m (\rho \dx p_m) + \dz \mathbb P_m (\rho \dz p_m) = \dx \Delta v_m + \dz \Delta w_m \\
		~~ - \dx \mathbb P_m \bigl\lbrack \rho( v_m \dx v_m + w_m \dz v_m) \bigr\rbrack  - \dz \mathbb P_m \bigl\lbrack \rho( v_m \dx w_m + w_m \dz w_m) \bigr\rbrack.
 	\end{cases}
 \end{equation}
 In particular,  \subeqref{eq:ODE-m-2}{1} and \subeqref{eq:ODE-m-2}{2} form the $ 2(2m+1)(m+1) $ dimensional ODE system of $ \lbrace (a_{\mathbf k}^v(t), a_{\mathbf k}^w(t)) \rbrace $. We remark that, \subeqref{eq:ODE-m}{3} is preserved by the solutions to \eqref{eq:ODE-m-2} with compatible initial data, since  \eqref{eq:ODE-m-2} implies that $  \dt \bigl\lbrack \dx \mathbb P_m (\rho v_m) + \dz \mathbb P_m (\rho w_m) \bigr\rbrack = 0 $. Also, it is easy to verify, after solving for $ \lbrace b_{\mathbf k} \rbrace_{\mathbf k \in \mathbb Z_{m} \backslash \lbrace (0,0) \rbrace} $ with given $ \lbrace a_{\mathbf k}^v, a_{\mathbf k}^w\rbrace_{\mathbf k \in \mathbb Z_m} $ via \subeqref{eq:ODE-m-2}{3} and substituting the solutions to \subeqref{eq:ODE-m-2}{1} and \subeqref{eq:ODE-m-2}{2}, we will have an ODE system of the form
 \begin{equation*}
 	\dt (a_{\mathbf k}^v, a_{\mathbf k}^w) = \mathcal F_{\mathbf k}((a_{\mathbf l}^v, a_{\mathbf l}^w)_{\mathbf l \in \mathbb Z_m}), ~~ \mathbf k \in \mathbb Z_m,
 \end{equation*}
 with $ \lbrace\mathcal F_{\mathbf k}\rbrace_{\mathbf k \in \mathbb Z_m} $ being locally Lipschitz continuous, in fact quadratic functions, with respect to the arguments. Here we need again that $ \rho $ is strictly positive. Then the existence theorem of ODE systems yields that given initial data $$ (v_m,w_m)^\top\Big|_{t=0} : = \mathbb P_m \lbrack(v_{in}, w_{in})^\top \rbrack - \nabla Q_m, $$
 where $ Q_m = \sum_{\mathbf k \in \mathbb Z_m\slash\lbrace(0,0)\rbrace} q_{\mathbf k} e^{\pi i k_1 x} \cos (\pi k_2 z) $, with $ q_{(k_1,k_2)} = \overline{q_{(-k_1,k_2)}} $, is determined by solving, as above, the non-singular linear algebraic system
 $$ \dv \mathbb P_m( \rho \nabla Q_m) = \dv(\mathbb P_m \lbrack \rho \mathbb P_m \lbrack(v_{in}, w_{in})^\top \rbrack \rbrack), $$
  there exists a solution $ (v_m(t),w_m(t),p_m(t))|_{t \in (0, T^*_m)} \in X_m  $ to system \eqref{eq:ODE-m-2}, or equivalently system \eqref{eq:ODE-m}, for some positive constant $ T^*_m \in (0,\infty) $.

  We remark that, as $ m \rightarrow \infty $, $ Q_m \rightarrow 0 $ in $ H^3 $. In fact, owing to the fact $ \dv(\mathbb P_m \lbrack \rho \mathbb P_m \lbrack(v_{in}, w_{in})^\top \rbrack\rbrack ) = \dv(\mathbb P_m \lbrack\rho \mathbb P_m \lbrack(v_{in}, w_{in})^\top \rbrack\rbrack - \mathbb P_m \lbrack\rho (v_{in}, w_{in})^\top \rbrack ) $, the elliptic estimate yields, as $ m\rightarrow \infty $,
  \begin{equation}\label{approximating-density-April}
  \norm{Q_m}{\Hnorm{3}} \leq \norm{\mathbb P_m (\rho \mathbb P_m \lbrack(v_{in}, w_{in})^\top \rbrack) - \mathbb P_m \lbrack\rho (v_{in}, w_{in})^\top \rbrack }{\Hnorm{2}} \rightarrow 0. \end{equation}
  Hence $ (v_m,w_m)\big|_{t=0} $ is an approximation of $ (v_{in}, w_{in}) $.
  \begin{remark}\label{rm:approximating-density}
  	We remind the reader that the smoothness of $ \rho $ in $ 2 \mathbb T^2 $ (i.e., \eqref{smooth-density}) is essential in showing  \eqref{approximating-density-April}.
  \end{remark}

\vspace{0.25cm}
{\par\noindent\bf Step 2:} existence of strong solutions. In order to pass the limit $ m \rightarrow \infty $ in \eqref{eq:ODE-m} to obtain a solution to \eqref{eq:periodic-anel-NS}, we need to establish that the existence time $ T_m^* $, obtained above, is independent of $ m $. This is done via some uniform-in-$m$ estimates. Let $ T_m^{**} \geq T_m^* $ be the maximal existing time of the solutions $ (v_m, w_m) $. All the estimates below in this step are done in the time interval $ [0,T_m^{**}) $.

After taking the $ L^2 $-inner product of \subeqref{eq:ODE-m}{1} and \subeqref{eq:ODE-m}{2} with $ v_m $ and $ w_m $, respectively, summing up the resulting equations and applying integration by parts yield,
\begin{equation}\label{galerkin:L2}
	\begin{aligned}
	& \dfrac{1}{2} \dfrac{d}{dt}  \int_{2\mathbb T^2} \rho ( |v_m|^2 + |w_m|^2) \idx + \int_{2\mathbb T^2} (|\nabla v_m|^2 + |\nabla w_m|^2 )\idx \\
	& ~~~~ = - \int \rho \bigl( v_m \dx v_m v_m + v_m \dx w_m w_m + w_m \dz v_m v_m + w_m \dz w_m w_m \bigr) \idx \\
	& ~~~~ \lesssim \norm{\nabla v_m, \nabla w_m}{\Lnorm{2}} \norm{ v_m, w_m}{\Hnorm{1}}^2 .
	\end{aligned}
\end{equation}
where we have used \subeqref{eq:ODE-m}{3}.
Next, we take the $ L^2 $-inner product of \subeqref{eq:ODE-m}{1} and \subeqref{eq:ODE-m}{2} with $ \dt v_m $ and $ \dt w_m $, respectively. Similarly, after summing up the resulting equations and applying integration by parts, one will have, since $ \rho $ has uniform upper bound and strictly positive lower bound,
\begin{align*}
	& \dfrac{1}{2} \dfrac{d}{dt} \int_{2\mathbb T^2} ( |\nabla v_m|^2 + |\nabla w_m|^2 ) \idx + \int_{2\mathbb T^2} \rho ( |\dt v_{m}|^2 + |\dt w_m|^2 ) \idx \\
	& ~~~~ = - \int \bigl\lbrack \rho ( v_m \dx v_m + w_m \dz v_m ) \dt v_m + \rho ( v_m \dx w_m + w_m \dz w_m ) \dt w_m \bigr\rbrack \idx \\
	& ~~~~ \lesssim \norm{\dt v_m, \dt w_m}{\Lnorm{2}} \norm{v_m, w_m}{\Lnorm{4}} \norm{ \nabla v_m, \nabla w_m}{\Lnorm{4}}\\
	& ~~~~ \lesssim \norm{\rho \dt v_m, \rho \dt w_m}{\Lnorm{2}} \norm{v_m, w_m}{\Lnorm{2}}^{1/2}\norm{v_m, w_m}{\Hnorm{1}}^{1/2} \norm{ \nabla v_m, \nabla w_m}{\Lnorm{2}}^{1/2} \\
	& ~~~~ ~~~~ \times \norm{ \nabla v_m, \nabla w_m}{\Hnorm{1}}^{1/2}
\end{align*}
where we have applied the two-dimensional Sobolev embedding inequality.
Thus, we have, after applying Young's inequality and \eqref{galerkin:L2},
\begin{equation}\label{galerkin:H1-1}
	\begin{aligned}
	& \dfrac{d}{dt} \norm{ v_m,  w_m}{\Hnorm{1}}^2 + \norm{\dt v_m,\dt w_m}{\Lnorm{2}}^2 + \norm{\nabla v_m, \nabla w_m}{\Lnorm{2}}^2 \\
	& ~~~~ \lesssim  \norm{ v_m,  w_m}{\Hnorm{1}}^3 (\norm{ \nabla v_m, \nabla w_m}{\Hnorm{1}} + 1).
	\end{aligned}
\end{equation}
In order to estimate $ \nabla^2 v_m, \nabla^2 w_m $, we rewrite \subeqref{eq:ODE-m}{1} and \subeqref{eq:ODE-m}{2} in the following pressure-viscosity form:
\begin{equation}\label{eq:galerkin-pressure-viscosity-form}
	\begin{gathered}
		- \Delta v_m + \mathbb P_m (\rho \dx p_m) = - \mathbb P_m \bigl\lbrack \rho (\dt v_m + v_m\dx v_m + w_m \dz v_m) \bigr\rbrack, \\
		- \Delta w_m + \mathbb P_m (\rho \dz p_m) = - \mathbb P_m \bigl\lbrack \rho (\dt w_m + v_m\dx w_m + w_m \dz w_m) \bigr\rbrack,
	\end{gathered}
\end{equation}
which yield
\begin{equation}\label{galerkin:H1-1.1}
	\begin{aligned}
		& \norm{- \Delta v_m + \mathbb P_m (\rho \dx p_m) }{\Lnorm{2}} + \norm{ - \Delta w_m + \mathbb P_m (\rho \dz p_m)}{\Lnorm{2}} \\
		& ~~~~ \lesssim \norm{\dt v_m, \dt w_m}{\Lnorm{2}} + \norm{v_m, w_m}{\Lnorm{4}}\norm{\nabla v_m, \nabla w_m}{\Lnorm{4}} \\
		& ~~~~ \lesssim \norm{\dt v_m, \dt w_m}{\Lnorm{2}} + \norm{v_m, w_m}{\Hnorm{1}} \norm{ \nabla v_m, \nabla w_m}{\Lnorm{2}}^{1/2} \\
		& ~~~~  \times \norm{ \nabla v_m, \nabla w_m}{\Hnorm{1}}^{1/2} \lesssim \norm{\dt v_m, \dt w_m}{\Lnorm{2}} +  \norm{ v_m,  w_m}{\Hnorm{1}}^{3/2} \\
		& ~~~~  \times (  \norm{\nabla v_m, \nabla w_m}{\Lnorm{2}}^{1/2} + \norm{\nabla^2 v_m, \nabla^2 w_m}{\Lnorm{2}}^{1/2}).
	\end{aligned}
\end{equation}
Meanwhile, direct calculations show that
\begin{equation}\label{galerkin:H1-1.2}
\begin{aligned}
	& \norm{- \Delta v_m + \mathbb P_m (\rho \dx p_m) }{\Lnorm{2}}^2 + \norm{- \Delta w_m + \mathbb P_m (\rho \dz p_m) }{\Lnorm{2}}^2 \\
	& ~~~~ = \norm{\nabla^2 v_m, \nabla^2 w_m }{\Lnorm{2}}^2 + \norm{\mathbb P_m (\rho \nabla p_m)
	}{\Lnorm{2}}^2
	\\
	& ~~~~
	- 2 \int_{2\mathbb T^2} ( \rho \Delta v_m  \dx p_m +  \rho \Delta w_m  \dz p_m ) \idx.
\end{aligned}
\end{equation}
Since
\begin{gather*}
\rho \Delta v_m  = \Delta (\rho v_m) - 2 \nabla \rho \cdot \nabla v_m - \Delta \rho v_m, \\
\rho \Delta w_m  = \Delta (\rho w_m) - 2 \nabla \rho \cdot \nabla w_m - \Delta \rho w_m,
\end{gather*}
we have, after applying integration by parts,
\begin{equation*}
	\begin{aligned}
		&  \int_{2\mathbb T^2} ( \rho \Delta v_m  \dx p_m +  \rho \Delta w_m  \dz p_m ) \idx \\
		& ~~ = - \underbrace{\int_{2\mathbb T^2} \lbrack\dx \mathbb P_m (\rho v_m) + \dz \mathbb P_m (\rho w_m) \rbrack \Delta p_m\idx}_{=0} - \int_{2\mathbb T^2} \bigl( 2 \nabla \rho \cdot \nabla v_m \dx p_m \\
		& ~~~~ ~~~~ + \Delta \rho v_m \dx p_m + 2 \nabla \rho \cdot \nabla w_m \dz p_m + \Delta \rho w_m \dz p_m  \bigr) \idx \\
		& ~~ \lesssim  \norm{\nabla v_m, \nabla w_m, v_m, w_m}{\Lnorm{2}} \norm{\nabla p_m}{\Lnorm{2}},
	\end{aligned}
\end{equation*}
where we need $ \rho \in C^2(2\mathbb T^2) $. Therefore, \eqref{galerkin:H1-1.1} and \eqref{galerkin:H1-1.2} imply, 
\begin{equation}\label{galerkin:H1-1.3}
	\begin{aligned}
	& \norm{\nabla^2 v_m, \nabla^2 w_m}{\Lnorm{2}} +\norm{\mathbb P_m(\rho \nabla p_m)}{\Lnorm{2}} \lesssim \norm{\dt v_m, \dt w_m}{\Lnorm{2}} \\
	& ~~~~ + \norm{ v_m,  w_m}{\Hnorm{1}}^{3/2} (\norm{\nabla v_m, \nabla w_m}{\Lnorm{2}}^{1/2} + \norm{\nabla^2 v_m, \nabla^2 w_m}{\Lnorm{2}}^{1/2} )\\
	& ~~~~ +   \norm{ v_m,  w_m}{\Hnorm{1}}^{1/2} \norm{\nabla p_m}{\Lnorm{2}}^{1/2}.
	 \end{aligned}
\end{equation}


On the other hand, taking the $ L^2 $-inner product of \subeqref{eq:ODE-m-2}{3} with $ - p_m $ yields
\begin{align*}
	& \int \rho |\nabla p_m|^2 \idx = \int (\Delta v_m \dx p_m + \Delta w_m \dz p_m) \idx\\
	& ~~~~ - \int \lbrack \rho( v_m\dx v_m + w_m \dz v_m) \dx p_m +  \rho( v_m\dx w_m + w_m \dz w_m) \dz p_m \rbrack \idx \\
	& ~~ \lesssim \norm{\nabla p_m}{\Lnorm{2}} \bigl( \norm{\nabla^2 v_m, \nabla^2 w_m}{\Lnorm{2}} + \norm{v_m, w_m}{\Lnorm{4}} \norm{\nabla v_m, \nabla w_m}{\Lnorm{4}} \bigr)\\
	& ~~ \lesssim  \norm{\nabla p_m}{\Lnorm{2}} \bigl( \norm{\nabla^2 v_m, \nabla^2 w_m}{\Lnorm{2}} + \norm{v_m, w_m}{\Hnorm{1}}^{3/2} \norm{\nabla v_m, \nabla w_m}{\Hnorm{1}}^{1/2} \bigr).
\end{align*}
Therefore, after applying the two-dimensional Sobolev embedding inequality, together with the fact that $ \rho $ is strictly positive, we arrive at
\begin{equation}\label{galerkin:H1-1.4}
	\norm{\nabla p_m}{\Lnorm{2}} \lesssim \norm{\nabla^2 v_m, \nabla^2 w_m}{\Lnorm{2}} + \norm{v_m, w_m}{\Hnorm{1}}^3 + 1.
\end{equation}
Then, \eqref{galerkin:H1-1.3} and \eqref{galerkin:H1-1.4} imply
\begin{equation}\label{galerkin:H1-2}
	\norm{\nabla^2 v_m, \nabla^2 w_m}{\Lnorm{2}} + \norm{\nabla p_m}{\Lnorm{2}} \lesssim \norm{\dt v_m, \dt  w_m}{\Lnorm{2}} + \norm{v_m, w_m}{\Hnorm{1}}^3 + 1.
\end{equation}
Consequently, \eqref{galerkin:H1-1} and \eqref{galerkin:H1-2} yield
\begin{equation}\label{galerkin:H1-3}
	\begin{gathered}
		 \dfrac{d}{dt} \norm{v_m,  w_m}{\Hnorm{1}}^2 + \norm{\nabla v_m, \nabla w_m}{\Hnorm{1}}^2 + \norm{\nabla p_m}{\Lnorm{2}}^2 + \norm{\dt v_m, \dt w_m}{\Lnorm{2}}^2\\
		 \lesssim 1 + \norm{v_m,  w_m}{\Hnorm{1}}^6.
	\end{gathered}
\end{equation}
Thus, \eqref{galerkin:H1-3} implies that, there exists $ T^* \in (0,T_m^{**}) $, independent of $ m $, such that
\begin{equation}\label{galerkin:H1}
	\begin{gathered}
	\sup_{0\leq t\leq T^*} \norm{v_m(t), w_m(t)}{\Hnorm{1}}^2 + \int_0^{T^*} \bigl(\norm{v_m(t), w_m(t)}{\Hnorm{2}}^2\\
	 ~~~~ ~~~~ + \norm{\dt v_m(t), \dt w_m(t)}{\Lnorm{2}}^2
	 + \norm{p_m(t)}{\Hnorm{1}}^2  \bigr) \,dt \leq C_{in},
	\end{gathered}
\end{equation}
where $ C_{in} \in (0,\infty) $ depends only on the initial data and $ \inf_{\vec{x}\in 2\mathbb T} \rho (\vec{x}), \norm{\rho}{C^2(2\mathbb T^2)} $, and $ T^* $ is independent of $ m $. Then, after passing $ m \rightarrow \infty $ with a suitable subsequence according to the weak compactness theorem of Sobolev spaces and Aubin's compactness theorem (see, e.g., \cite{Temam1984}), we have obtianed
\begin{equation}\begin{gathered}\label{galerkin:regularity}
	(v, w) \in L^\infty(0,T^*;H^1)\cap L^2(0,T^*;H^2), \\
	(\dt v, \dt w) \in L^2(0,T^*;L^2) , ~ p \in L^2(0,T^*;H^1)
\end{gathered}\end{equation}
such that
\begin{align*}
	(v_m, w_m) & \rightarrow  (v, w), & & \text{in}  ~~ L^\infty(0,T^*;H^{1})\\
	(v_m, w_m) & \rightharpoonup  (v, w), & & \text{weakly in}  ~~ L^2(0,T^*;H^{2}),\\
	(\dt v_m, \dt w_m) & \rightharpoonup  (\dt v, \dt w), & & \text{weakly in}  ~~ L^2(0,T^*;L^{2}),\\
	p_m & \rightharpoonup  p, & & \text{weakly in}  ~~ L^2(0,T^*;H^{1}).\\
\end{align*}
Thus it is easy to verify that $ (u = (v, w), p) $ is a strong solution to \eqref{eq:periodic-anel-NS} with \eqref{galerkin:regularity}, which satisfies,
according to \eqref{galerkin:H1},
\begin{equation}\label{periodic-est-H1}
	\sup_{0\leq t\leq T^*} \norm{u(t)}{\Hnorm{1}}^2 + \int_0^{T^*} \bigl(\norm{u(t)}{\Hnorm{2}}^2 + \norm{\dt u(t)}{\Lnorm{2}}^2
	  + \norm{\nabla p(t)}{\Lnorm{2}}^2  \bigr) \,dt \leq C_{in}.
\end{equation}

\vspace{0.25cm}
{\par\noindent\bf Step 3:} improving the regularity. In this step, we establish the regularity of solution $ (u, p) $ to \eqref{eq:periodic-anel-NS} via some {\it a priori} estimates. In the following, we use $ C_{in, \rho}\in (0,\infty) $ to denote a generic constant depending only on the initial data and on $$ \inf_{\vec{x}\in 2\mathbb T^2}{\rho(\vec{x})}, ~ \sup_{\vec{x}\in 2\mathbb T^2}{\rho(\vec{x})}, ~ \norm{\rho}{C^{3}(2\mathbb T^2)} \in (0,\infty) .$$
Here we focus with our estimates over $ [0,T^*] $. We emphasize that all the estimates in this step are formal and can be proved rigorously via the Galerkin method.

First, we obtain the time-derivative estimate. After applying a time derivative to \subeqref{eq:periodic-anel-NS}{1}, the resulting equation is
\begin{equation}\label{eq:periodic-t}
	\rho (\dt u_t + u \cdot \nabla u_t + u_t \cdot \nabla u) + \rho \nabla p_t = \Delta u_t.
\end{equation}
Then after taking the $ L^2 $-inner product of \eqref{eq:periodic-t} with $ u_t $, one has
\begin{equation}\label{ene-periodic-t}
	\dfrac{1}{2} \dfrac{d}{dt} \int \rho |u_t|^2 \idx  + \int | \nabla u_t|^2 \idx = - \int \rho (u_t \cdot \nabla ) u \cdot u_t \idx.
\end{equation}
The right-hand side of \eqref{ene-periodic-t} can be estimated as follows:
\begin{equation}\label{ene-periodic-t-2}
\begin{aligned}
	& - \int \rho (u_t \cdot \nabla ) u \cdot u_t \idx = \int \rho (u_t \cdot \nabla ) u_t \cdot u \idx \\
	& ~~~~ \lesssim \norm{u_t}{\Lnorm{4}} \norm{u}{\Lnorm{4}}\norm{\nabla u_t}{\Lnorm{2}} \lesssim \norm{u}{\Hnorm{1}}\norm{u_t}{\Lnorm{2}}^{1/2} \norm{u_t}{\Hnorm{1}}^{1/2} \norm{\nabla u_t}{\Lnorm{2}}.
\end{aligned}
\end{equation}
Together with Young's inequality, \eqref{ene-periodic-t} and \eqref{ene-periodic-t-2} imply
\begin{equation*}
	\dfrac{d}{dt} \norm{\rho^{1/2} u_t}{\Lnorm{2}}^2 + \norm{\nabla u_t}{\Lnorm{2}}^2 \lesssim   (\norm{u}{\Hnorm{1}}^2 + \norm{u}{\Hnorm{1}}^4)\norm{u_t}{\Lnorm{2}}^2.
\end{equation*}
Thus applying Gr\"onwall's inequality to the above yields, together with \eqref{periodic-est-H1},
\begin{equation}\label{nonsingular-evol-est}
	\begin{gathered}
	\sup_{0\leq t\leq T^*} \bigl( \norm{u(t)}{\Hnorm{1}}^2 + \norm{u_t(t)}{\Lnorm{2}}^2 \bigr) + \int_0^{T^*} \bigl( \norm{u(t)}{\Hnorm{2}}^2 + \norm{u_t(t)}{\Hnorm{1}}^2 \\
	~~~~ ~~~~ + \norm{\nabla p(t)}{\Lnorm{2}}^2 \bigr) \,dt \leq C_{in,\rho},
	\end{gathered}
	\end{equation}
	where $ T^* $ is given in step 2. Then, following similar arguments as in \eqref{galerkin:H1-2}, one can obtain,
\begin{equation}\label{nonsingular-ellip-est-1}
	\sup_{0\leq t \leq T^*} \bigl( \norm{\nabla^2 u(t)}{\Lnorm{2}} + \norm{\nabla p(t)}{\Lnorm{2}} \bigr) \leq C_{in,\rho}.
\end{equation}

Next, we will sketch the $ H^3 $ estimate of $ u $. First, applying $ \dx $ to \subeqref{eq:periodic-anel-NS}{1} yields,
\begin{equation}\label{eq:nonsingular-dx}
	\dx ( \rho (\dt u + u \cdot \nabla u)) + \rho \nabla \dx p + \dx \rho \nabla p = \Delta \dx u.
\end{equation}
After taking the $ L^2 $-inner product of \eqref{eq:nonsingular-dx} with $ u_{xxx} $ and applying integration by parts, we obtain
\begin{equation}\label{nonsingular-ellip-est-1.1}
		\norm{\nabla \partial_{xx} u}{\Lnorm{2}} \leq C_{in,\rho} \bigl( 1 + \norm{\nabla \dx p}{\Lnorm{2}} + \norm{\dt u}{\Hnorm{1}} + \norm{\nabla^3 u}{\Lnorm{2}}^{1/2} \bigr),
\end{equation}
where we have applied \eqref{nonsingular-evol-est}, \eqref{nonsingular-ellip-est-1}, H\"older's and the Sobolev embedding inequalities. Then, after noticing $ \Delta \dx u = \partial_{xxx} u + \partial_{xzz} u $ and using \eqref{eq:nonsingular-dx}, \eqref{nonsingular-ellip-est-1.1} implies
\begin{equation}\label{nonsingular-ellip-est-1.2}
		\norm{\nabla^2 \partial_{x} u}{\Lnorm{2}} \leq C_{in,\rho} \bigl( 1 + \norm{\nabla \dx p}{\Lnorm{2}} + \norm{\dt u}{\Hnorm{1}} + \norm{\nabla^3 u}{\Lnorm{2}}^{1/2} \bigr).
\end{equation}
Moreover, since
\begin{equation*}
	\partial_{zzz} u = \Delta \partial_z u - \partial_{xxz} u = \dz ( \rho (\dt u + u \cdot \nabla u)) + \rho \nabla \dz p + \dz \rho \nabla p - \partial_{xxz} u,
\end{equation*}
one can also derive
\begin{equation}\label{nonsingular-elllip-est-1.3}
	\norm{\nabla^3 u}{\Lnorm{2}} \leq C_{in,\rho} \bigl( 1 + \norm{\nabla^2 p}{\Lnorm{2}} + \norm{\dt u}{\Hnorm{1}} + \norm{\nabla^3 u}{\Lnorm{2}}^{1/2} \bigr).
\end{equation}
What is left is to obtain the estimate of $ \norm{\nabla^2 p}{\Lnorm{2}} $, or equivalently, the estimate of $ \norm{\Delta p}{\Lnorm{2}} $. We rewrite \subeqref{eq:periodic-anel-NS}{1}, after multiplying it with $ \rho $ and applying $ \dv $ to the resulting, as,
	\begin{equation}\label{nonsingular-ellip-p}
		\begin{aligned}
		& \dv (\rho^2 \nabla p) = \dv \bigl(\rho \Delta u - \rho^2 (\dt u + u\cdot\nabla u) \bigr) \\
		& ~~~~ = - 2 \nabla^2 \log \rho : \nabla (\rho u) + \dv \bigl(2 |\nabla\log \rho|^2 \rho u - \Delta \rho u \bigr) \\
		& ~~~~ ~~~~ - \dv\bigl( \rho^2 (\dt u + u\cdot\nabla u)  \bigr),
		\end{aligned}
	\end{equation}
where we have used \subeqref{eq:periodic-anel-NS}{2} and the identity
\begin{align*}
	& \rho \Delta u = \Delta (\rho u) - 2 \nabla \rho \cdot \nabla u - \Delta \rho u \\
	& ~~ = \Delta (\rho u)  - 2 \nabla \log \rho \cdot \nabla ( \rho u) + 2 |\nabla\log \rho|^2 \rho u - \Delta \rho u.
\end{align*}
Thus we have
\begin{equation*}
		\begin{aligned}
		& \rho^2 \Delta p =
		- \nabla \rho^2 \cdot \nabla p - 2 \nabla^2 \log \rho : \nabla (\rho u) + \dv \bigl(2 |\nabla\log \rho|^2 \rho u - \Delta \rho u \bigr) \\
		& ~~~~ ~~~~ - \dv\bigl( \rho^2 (\dt u + u\cdot\nabla u)  \bigr),
		\end{aligned}
\end{equation*}
which yields, together with \eqref{nonsingular-evol-est} and \eqref{nonsingular-ellip-est-1},
\begin{equation}\label{nonsingular-ellip-est-1.4}
	\norm{\nabla^2 p}{\Lnorm{2}} \leq \norm{\Delta p}{\Lnorm{2}} \leq C_{in,\rho} ( 1 + \norm{\dt u}{\Hnorm{1}} + \norm{\nabla^3 u}{\Lnorm{2}}^{1/2} ).
\end{equation}
Then \eqref{nonsingular-evol-est}, \eqref{nonsingular-elllip-est-1.3} and \eqref{nonsingular-ellip-est-1.4} yield
\begin{equation}\label{nonsingular-ellip-est-2}
	\int_0^{T^*} ( \norm{\nabla^3 u(t)}{\Lnorm{2}}^2 + \norm{\nabla^2 p}{\Lnorm{2}}^2 ) \,dt \leq C_{in, \rho}.
\end{equation}

Now we can restrict the solution in $ \Omega $. It is easy to verify $ (u|_{\Omega},p|_{\Omega} ) $ is the strong solution to \eqref{eq:nonsingular-anel-NS} with the boundary condition \eqref{bc:2d-anel-NS}, and it satisfies \eqref{nonsingular-regularity} owing to \eqref{nonsingular-evol-est}, \eqref{nonsingular-ellip-est-1}, and \eqref{nonsingular-ellip-est-2}. In particular, the regularity of $ p $ allows us to take the trace of $ \nabla p $ on the boundary, and it follows from the construction that $ p $ satisfies \eqref{ellip:p-1}.

\vspace{0.25cm}
{\par\noindent\bf Step 4:} uniqueness and continuous dependency on the initial data. Let $ (u_1, p_1) $, $ (u_2, p_2) $ be two strong solutions to \eqref{eq:nonsingular-anel-NS} with initial data $ u_{in,1},u_{in,2} $, respectively, and $ (u_1, p_1), (u_2, p_2) $ satisfy the estimates in \eqref{nonsingular-regularity} for $ T_1^*, T_2^* \in (0,\infty) $, respectively. That is
\begin{align*}
	& \sup_{0\leq t \leq T^*_1} \bigl( \norm{u_1(t)}{\Hnorm{2}}^2 + \norm{u_{1,t}(t)}{\Lnorm{2}}^2 + \norm{\nabla p_1(t)}{\Lnorm{2}}^2 \bigr) \\
	& ~~~~ + \int_0^{T_1^*} \bigl( \norm{u_1(t)}{\Hnorm{3}}^2 + \norm{u_{1,t}(t)}{\Hnorm{1}}^2 + \norm{\nabla p_1(t)}{\Hnorm{1}}^2 \bigr)\,dt \leq C_{in,1},\\
	& \sup_{0\leq t \leq T^*_2} \bigl( \norm{u_2(t)}{\Hnorm{2}}^2 + \norm{u_{2,t}(t)}{\Lnorm{2}}^2 + \norm{\nabla p_2(t)}{\Lnorm{2}}^2 \bigr) \\
	& ~~~~ + \int_0^{T_2^*} \bigl( \norm{u_2(t)}{\Hnorm{3}}^2 + \norm{u_{2,t}(t)}{\Hnorm{1}}^2 + \norm{\nabla p_2(t)}{\Hnorm{1}}^2 \bigr)\,dt \leq C_{in,2},
\end{align*}
where $ C_{in,1}, C_{in,2} $ depend only on the initial data and
$$ \inf_{\vec{x}\in\Omega}{\rho(\vec{x})}, ~ \norm{\rho}{C^{3}(\overline{\Omega})} \in (0,\infty) .$$
In the following, we denote $ T^* := \min\lbrace T_1^*, T_2^* \rbrace $ and $ C_{in,1,2} := \max\lbrace C_{in,1} , C_{in,2} \rbrace $. Also, let $ u_{12}: = u_1 - u_2, p_{12} := p_1 - p_2 $. Then $ (u_{12}, p_{12}) $ satisfies
\begin{equation}\label{eq:nonsingular-conti}
	\begin{cases}
		\rho (\dt u_{12} + u_1 \cdot \nabla u_{12} + u_{12} \cdot \nabla u_2 ) + \rho \nabla p_{12} = \Delta u_{12} & \text{in} ~ \Omega,\\
		\dv(\rho u_{12}) = 0 & \text{in} ~ \Omega.
	\end{cases}
\end{equation}
Then taking the $ L^2 $-inner product of \subeqref{eq:nonsingular-conti}{1} with $ u_{12} $ yields
\begin{align*}
	& \dfrac{1}{2} \dfrac{d}{dt} \norm{\rho^{1/2} u_{12}}{\Lnorm{2}}^2 + \norm{\nabla u_{12}}{\Lnorm{2}}^2 = - \int (\rho u_{12} \cdot \nabla) u_{2} \cdot u_{12} \idx \\
	& ~~~~ \leq C \norm{\nabla u_2}{\Lnorm{2}} \norm{u_{12}}{\Lnorm{4}}^2 \leq C \norm{\nabla u_2}{\Lnorm{2}} \norm{\rho^{1/2}u_{12}}{\Lnorm{2}}\norm{u_{12}}{\Hnorm{1}}\\
	& ~~~~ \leq  \dfrac{1}{2} \norm{\nabla u_{12}}{\Lnorm{2}}^2 + C (1 + \norm{\nabla u_2}{\Lnorm{2}}^2) \norm{\rho^{1/2} u_{12}}{\Lnorm{2}}^2.
\end{align*}
Then applying Gr\"onwall's inequality yields \eqref{stability-est-u}.

In particular, for $ u_{in,1} = u_{in,2} $, we have $ u_1 \equiv u_2 $ and $ T_1^* = T^*_2 $.

This finishes the proof of Theorem \ref{prop:local-exist-nonsingular} in the case when $ n = 2 $. The case when $ n = 3 $ follows by similar arguments, employing the three-dimensional Sobolev embedding inequalities.

\begin{remark}\label{rm:uniqueness}
	It is worth stressing that thanks to
	the uniqueness of solutions in Step 4, all strong solutions to \eqref{eq:nonsingular-anel-NS} with \eqref{bc:2d-anel-NS} with the same initial data as described in Theorem \ref{prop:local-exist-nonsingular} should be equal to the one constructed by our extension-restriction techniques through Step 0 to Step 3.
\end{remark}

\section{The physical vacuum profile}\label{sec:physical-vacuum}

This section will discuss the anelastic equations \eqref{eq:2d-anel-NS} with \eqref{bc:2d-anel-NS} in the case of  physical vacuum density profile, i.e., \eqref{density-profile-smooth}. We remind the reader that
\begin{equation}\label{density:physical-vacuum}
	\rho_\mathrm{pv}(z):= \bigl\lbrack z(2-z)\bigr\rbrack^\alpha, ~~~~ z \in (0,1).
\end{equation}
Thus in this section, we will study the following system,
\begin{equation}\label{eq:physicalvacuum-anel-NS}
    \begin{cases}
        \rho_\mathrm{pv} (\dt u + u \cdot \nabla u) + \rho_\mathrm{pv} \nabla p = \Delta u & \text{in} ~ \Omega, \\
        \dv (\rho_\mathrm{pv} u) = 0 & \text{in} ~ \Omega,
    \end{cases}
\end{equation}
and we will show the following theorem:
\begin{thm}\label{prop:local-exist-2d-anel-NS}
	Consider $ \alpha > 3/2 $, and initial data $ u_{in} $ as in \eqref{in:2d-anel-NS}, satisfying the compatibility condition \eqref{in:compatibility}, with $ \rho $ replaced by $ \rho_\mathrm{pv} $. There exists a positive constant $ T \in (0,\infty) $ and a unique  strong solution $ (u,p) $ to the anelastic equations \eqref{eq:physicalvacuum-anel-NS} with the boundary condition \eqref{bc:2d-anel-NS} in $ [0,T] $, which satisfies the following regularity:
	\begin{gather*}
		 u, \nabla_h u \in L^\infty(0,T;H^1(\Omega)), ~ u \in L^2(0,T;H^1(\Omega)), \\
		  \rho_\mathrm{pv} \partial_{zz} u \in L^\infty(0,T; L^2(\Omega)), ~ \rho_\mathrm{pv}^{1/2} u_t \in L^\infty(0,T;L^2(\Omega)), \\
		  u_t \in L^2(0,T;H^1(\Omega)), ~ \rho_\mathrm{pv}^{2} \nabla p \in L^\infty(0,T;L^2(\Omega)).
	\end{gather*}
	In addition, we have the estimates
	\begin{equation}\label{total-estimate-physical-vacuum}
	\begin{aligned}
	& \sup_{0\leq t\leq T} \bigl( \norm{u(t)}{\Hnorm{1}}^2 + \norm{\nabla \nabla_h u(t)}{\Lnorm{2}}^2 + \norm{\rho_\mathrm{pv}\partial_{zz} u(t)}{\Lnorm{2}}^2
	 + \norm{\rho_\mathrm{pv}^{1/2} u_{t}(t)}{\Lnorm{2}} \\
	& ~~~~
	 + \norm{\rho_\mathrm{pv}^{2} \nabla p(t)}{\Lnorm{2}} \bigr)
	+ \int_0^T \bigl( \norm{\nabla u(t)}{\Lnorm{2}}^2 + \norm{u_{t}(t)}{\Hnorm{1}}^2\bigr) \,dt \leq C_{in},
	\end{aligned}
	\end{equation}
	where $ C_{in} $ depends only on the initial data.
	Suppose that there are two solutions $ u_1, u_2 $ with initial data $ u_{in,1}, u_{in,2} $ in time interval $ [0,T] $. Then
	\begin{equation*}
	\begin{aligned}
	& \sup_{0\leq t\leq T}\norm{\rho_\mathrm{pv}^{1/2} (u_1(t) - u_2(t)) }{\Lnorm{2}}^2 + \int_0^T \norm{\nabla (u_1(s) - u_2(s))}{\Lnorm{2}}^2 \,ds \\
	& ~~~~ \leq C_{in,T} \norm{\rho_\mathrm{pv}^{1/2} (u_{in,1} - u_{in,2}) }{\Lnorm{2}}^2,
	\end{aligned}
\end{equation*}
for some constant $ C_{in,T}\in (0,\infty) $ depending on initial data and $ T $.
\end{thm}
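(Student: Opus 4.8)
\textbf{Proof strategy for Theorem \ref{prop:local-exist-2d-anel-NS}.} The plan is to obtain $(u,p)$ as a limit of solutions associated with a sequence of \emph{non-degenerate} density profiles approximating $\rho_\mathrm{pv}$, to which Theorem \ref{prop:local-exist-nonsingular} applies (again it suffices to present the case $n=2$). First I would fix a family $\{\rho_\varepsilon\}_{\varepsilon\in(0,1)}$ of smooth, even-in-$z$, strictly positive profiles with $\inf_\Omega\rho_\varepsilon>0$ for each fixed $\varepsilon$, which coincide with $\rho_\mathrm{pv}$ away from $\{z=0\}$ and are flattened smoothly near $z=0$ at height $\simeq\varepsilon^\alpha$, so that $\rho_\varepsilon\to\rho_\mathrm{pv}$ and $\rho_\mathrm{pv}\le C\rho_\varepsilon\le C$ uniformly in $\varepsilon$, together with approximate data $u_{in,\varepsilon}\to u_{in}$ satisfying $\dv(\rho_\varepsilon u_{in,\varepsilon})=0$ and the compatibility conditions \eqref{in:compatibility} with $\rho$ replaced by $\rho_\varepsilon$. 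Theorem \ref{prop:local-exist-nonsingular} then provides strong solutions $(u_\varepsilon,p_\varepsilon)$ on intervals $[0,T^*_\varepsilon]$. The issue is that the constant $C_{in,\rho}$ there depends on $\inf\rho_\varepsilon$ and $\norm{\rho_\varepsilon}{C^3}$, both of which blow up as $\varepsilon\to0$, so these solutions are of no direct use; the core of the argument is a separate family of \emph{$\varepsilon$-uniform, $\rho_\mathrm{pv}$-weighted} estimates on an interval $[0,T]$ independent of $\varepsilon$.

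The uniform estimates I would build in a hierarchy mirroring Steps 2--3 of the previous section, carried out with the degenerate weight. Testing the momentum equation by $u_\varepsilon$ controls $\rho_\varepsilon^{1/2}u_\varepsilon$ in $L^\infty(0,T;\Lnorm2)$ and $\nabla u_\varepsilon$ in $L^2(0,T;\Lnorm2)$, the convective term vanishing by $\dv(\rho_\varepsilon u_\varepsilon)=0$. Since $\rho_\varepsilon=\rho_\varepsilon(z)$, the operator $\nabla_h$ commutes with multiplication by $\rho_\varepsilon$, so differentiating the equations horizontally and testing appropriately controls $\nabla\nabla_h u_\varepsilon$ without the weight interfering. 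Differentiating the momentum equation in $t$ and testing by $u_{\varepsilon,t}$, exactly as in \eqref{ene-periodic-t}--\eqref{ene-periodic-t-2}, controls $\rho_\varepsilon^{1/2}u_{\varepsilon,t}$ in $L^\infty(0,T;\Lnorm2)$ and $u_{\varepsilon,t}$ in $L^2(0,T;\Hnorm1)$. Reading the momentum equation as $\Delta u_\varepsilon=\rho_\varepsilon(u_{\varepsilon,t}+u_\varepsilon\cdot\nabla u_\varepsilon+\nabla p_\varepsilon)$ and using $\partial_{zz}u_\varepsilon=\Delta u_\varepsilon-\Delta_h u_\varepsilon$ yields $\rho_\mathrm{pv}\partial_{zz}u_\varepsilon\in L^\infty(0,T;\Lnorm2)$; and for the pressure I would start from the elliptic identity \eqref{nonsingular-ellip-p} (written so as to avoid second derivatives of $\rho_\varepsilon$), $\dv(\rho_\varepsilon^2\nabla p_\varepsilon)=\dv(\rho_\varepsilon\Delta u_\varepsilon-\rho_\varepsilon^2(u_{\varepsilon,t}+u_\varepsilon\cdot\nabla u_\varepsilon))$, and test it against the weighted multiplier $\rho_\varepsilon^2 p_\varepsilon$ to produce $\rho_\mathrm{pv}^2\nabla p_\varepsilon\in L^\infty(0,T;\Lnorm2)$. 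Throughout these steps, the Hardy-type inequalities of Lemma \ref{lm:hardy-type-ineq} (with the parameter there identified with $\varepsilon$) are the tool for trading a degenerate weight for two extra powers of $(z+\varepsilon)$ at the cost of a derivative and for absorbing the boundary-singular contributions at $\{z=0\}$; the condition $\alpha>3/2$ enters precisely in keeping all the relevant weight exponents inside the admissible range of \eqref{hardy-1}--\eqref{hardy-2}. Collecting everything into one nonlinear differential inequality of the form $\frac{d}{dt}\mathcal E_\varepsilon+\mathcal D_\varepsilon\lesssim P(\mathcal E_\varepsilon)$, with $\mathcal E_\varepsilon$ controlling the left-hand side of \eqref{total-estimate-physical-vacuum} and $\mathcal E_\varepsilon(0)$ bounded by the data via \eqref{in:compatibility}, one closes the bound on a short $T$ depending only on the data, uniformly in $\varepsilon$.

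Given \eqref{total-estimate-physical-vacuum} uniformly in $\varepsilon$, the passage to the limit is standard: the Aubin--Lions lemma together with weak-$\ast$ compactness (see, e.g., \cite{Temam1984}) extracts a subsequence along which $u_\varepsilon\to u$ strongly enough to pass to the limit in the nonlinearity, $\rho_\varepsilon u_\varepsilon\to\rho_\mathrm{pv}u$, and $\rho_\varepsilon^2\nabla p_\varepsilon\rightharpoonup\rho_\mathrm{pv}^2\nabla p$ in the appropriate weighted spaces, so that $(u,p)$ solves \eqref{eq:physicalvacuum-anel-NS} a.e. in $\Omega$ with \eqref{bc:2d-anel-NS}, inheriting the stated regularity and \eqref{total-estimate-physical-vacuum} by lower semicontinuity.

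Finally, for uniqueness and continuous dependence set $u_{12}:=u_1-u_2$, $p_{12}:=p_1-p_2$, which satisfy $\rho_\mathrm{pv}(\dt u_{12}+u_1\cdot\nabla u_{12}+u_{12}\cdot\nabla u_2)+\rho_\mathrm{pv}\nabla p_{12}=\Delta u_{12}$ with $\dv(\rho_\mathrm{pv}u_{12})=0$. The target is $\frac12\frac{d}{dt}\norm{\rho_\mathrm{pv}^{1/2}u_{12}}{\Lnorm2}^2+\norm{\nabla u_{12}}{\Lnorm2}^2\le C(t)\norm{\rho_\mathrm{pv}^{1/2}u_{12}}{\Lnorm2}^2$ with $C(\cdot)\in L^1(0,T)$, whence Gr\"onwall gives the stability estimate and, with $u_{in,1}=u_{in,2}$, uniqueness. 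The main obstacle here --- alongside the uniform weighted estimates --- is that the limiting solutions carry no trace of $\nabla u$ on $\{z=0\}$, so neither the boundary-free integration by parts $\int\Delta u_{12}\cdot u_{12}\idx=-\norm{\nabla u_{12}}{\Lnorm2}^2$ (the boundary term at $\{z=0\}$ is not defined) nor the pairing of the singular term $\rho_\mathrm{pv}\nabla p_{12}$ against $u_{12}$ is directly legitimate. I would resolve this following \cite{Serrin1963}: instead of testing the difference equation, test the equation for $u_1$ against $u_2$ and that for $u_2$ against $u_1$ and add, working from the weak (distributional) formulations and the energy identities of the individual solutions so that all boundary and singular contributions are bypassed; the extra horizontal regularity $\nabla\nabla_h u_i\in L^\infty(0,T;\Lnorm2)$ together with the Hardy inequalities then suffices to control the remaining vertical-derivative terms in a weighted fashion and to produce the integrable coefficient $C(t)$.
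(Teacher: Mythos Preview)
Your overall strategy---regularize $\rho_\mathrm{pv}$ by strictly positive profiles, invoke Theorem~\ref{prop:local-exist-nonsingular}, derive $\varepsilon$-uniform weighted estimates via Hardy's inequalities, pass to the limit, and prove uniqueness by the Serrin cross-testing argument---is precisely the paper's route. Two points in your sketch differ from, or miss, what the paper actually does.

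\textbf{The pressure estimate.} You propose to test $\dv(\rho_\varepsilon^2\nabla p_\varepsilon)=F$ against the multiplier $\rho_\varepsilon^2 p_\varepsilon$, citing \eqref{nonsingular-ellip-p} as ``written so as to avoid second derivatives of $\rho_\varepsilon$''. That identity in fact contains $\nabla^2\log\rho$ and $\Delta\rho$, and your test introduces a commutator term $\tfrac12\int\dv(\rho_\varepsilon^2\nabla\rho_\varepsilon^2)\,p_\varepsilon^2$, which scales like $\int q_\varepsilon^{4\alpha-2}p_\varepsilon^2$ and demands weighted control of $p_\varepsilon$ itself---not obviously available from $\|q_\varepsilon^{2\alpha}\nabla p_\varepsilon\|_{\Lnorm2}$ alone. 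The paper instead multiplies the momentum equation by $q_\varepsilon^{3\alpha}$ \emph{before} taking the divergence, obtaining $\dv(q_\varepsilon^{4\alpha}\nabla p_\varepsilon)=G$ (see \eqref{ellip:p-0}), and then tests against $-p_\varepsilon$. This produces $\|q_\varepsilon^{2\alpha}\nabla p_\varepsilon\|_{\Lnorm2}^2$ without commutators; every remaining term on the right is linear in $p_\varepsilon$ and is converted, by integration by parts in the horizontal variable (using \eqref{id:vertical-v}), into expressions involving $\nabla_h p_\varepsilon$ only---see \eqref{lene:29Jan-001}. It is in this last step that the worst weight exponent $\alpha-2$ appears and forces $\alpha>3/2$.

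\textbf{The boundary condition $\partial_z v|_{z=0}=0$ for the limit.} You assert that the limit inherits \eqref{bc:2d-anel-NS}, but the regularity in \eqref{regularity-total} gives only $\rho_\mathrm{pv}\partial_{zz}u\in L^\infty_tL^2_x$, so $\partial_z v$ has no trace at $z=0$ in the usual sense. The paper resolves this by testing \eqref{eq:approximating} against a carefully constructed $\vec\psi_\varepsilon$ with $\dv(q_\varepsilon^\alpha\vec\psi_\varepsilon)=0$ and showing that $\partial_z v_\varepsilon|_{z=0}$ is uniformly bounded in a negative Sobolev space (see \eqref{approximating-bc-dzv}), whence the limit vanishes in the sense of distributions. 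This step is not automatic from lower semicontinuity and should be included in your argument.
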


Apparently, $ \rho_\mathrm{py} $ is well-defined for $ z \in (0,2) $ and it is smooth at $ z = 1 $, and $ \mathfrak E_{sp}^+ \rho_{\mathrm{pv}} $ is smooth except $ z \in 2 \mathbb Z $.  We refer to Remark \ref{rm:compatible-condition} about $ \rho_{\mathrm{pv}} $.

Our strategy is to apply Theorem \ref{prop:local-exist-nonsingular} to obtain a sequence of approximating solutions. To do this, we first have to construct an approximating sequence of $ \rho_\mathrm{pv} $. We start with a lemma.

\begin{lm}\label{lm:approximating-z}
	For any fixed $ \varepsilon \in (0,1) $,
	there exists a function $ q_\varepsilon :[0,1] \mapsto \mathbb R^+ $, which satisfies the following properties:
	\begin{enumerate}
		\item $ \lim\limits_{\varepsilon\rightarrow 0^+} q_\varepsilon (z) =  z(2-z), ~ \forall \, z \in (0,1) $, and the convergence is uniform;
		\item $ \mathfrak E_{sp}^+ q_\varepsilon \in C^3(2\mathbb T^n) $;
		\item $ q_\varepsilon $ is non-decreasing for $ z \in [0,1] $;
		\item $ \dfrac{z+\varepsilon}{4} \leq q_\varepsilon \leq 2(z+\varepsilon) $ for $ z \in [0,1] $;
		\item $ |q_\varepsilon'| + |q_\varepsilon q_\varepsilon''| + |q_\varepsilon^2 q_\varepsilon'''| \leq C $, for all $ z \in [0,1] $, for some constant $ C \in (0,\infty) $, which is independent of $ \varepsilon $.
	\end{enumerate}
\end{lm}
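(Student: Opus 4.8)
The plan is to construct $q_\varepsilon$ explicitly by a two-stage procedure: first mollify the function $z(2-z)$ on an appropriate scale, then correct it near the endpoints $z=0$ and $z=1$ so that the symmetric-periodic extension is $C^3$ across $z\in 2\mathbb Z$. I would begin by setting, on $[0,1]$, a candidate of the form $\widetilde q_\varepsilon(z) := z(2-z) + c\,\varepsilon$ for a suitable constant $c$; this already handles properties (1), (3), (4): the uniform convergence is immediate, monotonicity holds since $\widetilde q_\varepsilon'(z) = 2(1-z)\geq 0$ on $[0,1]$, and the two-sided bound $\frac{z+\varepsilon}{4}\leq \widetilde q_\varepsilon\leq 2(z+\varepsilon)$ follows for $c$ chosen in, say, $[1/4,1]$ because $z(2-z)$ is comparable to $z$ on $[0,1]$. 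The derivatives $\widetilde q_\varepsilon',\widetilde q_\varepsilon'',\widetilde q_\varepsilon'''$ are uniformly bounded (indeed $\widetilde q_\varepsilon''' \equiv 0$), so (5) holds for this intermediate object. The only failing property is (2): the even extension $\mathfrak E_s^+\widetilde q_\varepsilon$ is $C^0$ but not $C^1$ at $z=0$ unless $\widetilde q_\varepsilon'(0)=0$, which it is not, and similarly the periodic extension need not match derivatives at $z=1$ unless all odd derivatives vanish there — but since the extension near $z=1$ glues an even reflection of the $z\in(1,2)$ piece, it is the odd-order derivatives at $z=1$ that must vanish, and $\widetilde q_\varepsilon'(1)=0$ already while $\widetilde q_\varepsilon'''(1)=0$, so $z=1$ is fine; the genuine obstruction is at $z=0$.

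Next I would fix $z=0$ by flattening $\widetilde q_\varepsilon$ in a small neighborhood $[0,\ell_\varepsilon]$ with $\ell_\varepsilon \to 0$ as $\varepsilon\to 0$. Concretely, choose a fixed smooth cutoff $\chi:[0,\infty)\to[0,1]$ with $\chi\equiv 0$ on $[0,1/3]$, $\chi\equiv 1$ on $[2/3,\infty)$, and define
\begin{equation*}
	q_\varepsilon(z) := \varepsilon + \chi\!\left(\frac{z}{\ell_\varepsilon}\right) z(2-z),
\end{equation*}
so that $q_\varepsilon \equiv \varepsilon$ near $z=0$ — hence all its $z$-derivatives vanish at $0$ and $\mathfrak E_s^+q_\varepsilon$ is $C^\infty$ (in particular $C^3$) there — while $q_\varepsilon = \varepsilon + z(2-z)$ for $z\geq \tfrac{2}{3}\ell_\varepsilon$, so behaviour near $z=1$ is unchanged and (2) holds globally after the periodic extension. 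With $\ell_\varepsilon \to 0$, property (1) is preserved because the region where $q_\varepsilon$ differs from $\varepsilon + z(2-z)$ shrinks and on it $|z(2-z)|\lesssim \ell_\varepsilon$. For monotonicity (3): $q_\varepsilon'(z) = \tfrac1{\ell_\varepsilon}\chi'(z/\ell_\varepsilon)\,z(2-z) + \chi(z/\ell_\varepsilon)(2-2z)$; the second term is $\geq 0$ on $[0,1]$, and the first is $\geq 0$ since $\chi'\geq 0$, so $q_\varepsilon' \geq 0$.

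The main technical point — and the step I expect to require the most care — is the uniform-in-$\varepsilon$ bound (5) together with the two-sided comparison (4), because the flattening introduces factors of $\ell_\varepsilon^{-1}$, $\ell_\varepsilon^{-2}$, $\ell_\varepsilon^{-3}$ through the derivatives of $\chi(z/\ell_\varepsilon)$. The key observation is that these large derivatives are supported on $z\in[\tfrac13\ell_\varepsilon,\tfrac23\ell_\varepsilon]$, where $z(2-z) \simeq z \simeq \ell_\varepsilon$ and, crucially, $q_\varepsilon(z)\simeq \varepsilon + \ell_\varepsilon$. To make the bounds close I would \emph{couple the scales}, choosing $\ell_\varepsilon = \varepsilon$ (or $\ell_\varepsilon \simeq \varepsilon$). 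Then on the transition region $q_\varepsilon \simeq \varepsilon \simeq \ell_\varepsilon$, so $|q_\varepsilon'| \lesssim \ell_\varepsilon^{-1}\cdot \ell_\varepsilon + 1 \lesssim 1$, and $|q_\varepsilon q_\varepsilon''| \lesssim \ell_\varepsilon \cdot (\ell_\varepsilon^{-2}\cdot\ell_\varepsilon + \ell_\varepsilon^{-1}) \lesssim 1$, and $|q_\varepsilon^2 q_\varepsilon'''| \lesssim \ell_\varepsilon^2\cdot(\ell_\varepsilon^{-3}\cdot \ell_\varepsilon + \ell_\varepsilon^{-2}) \lesssim 1$, all uniformly in $\varepsilon$; off the transition region the bounds are trivial since $q_\varepsilon = \varepsilon + z(2-z)$ has $q_\varepsilon''' \equiv 0$ and bounded lower derivatives. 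Similarly, on the transition region $q_\varepsilon = \varepsilon + \chi(z/\ell_\varepsilon)z(2-z)$ lies between $\varepsilon$ and $\varepsilon + z(2-z)$, and with $\ell_\varepsilon=\varepsilon$ one checks $\tfrac{z+\varepsilon}{4}\leq q_\varepsilon \leq 2(z+\varepsilon)$ directly there (and elsewhere it follows from the bound for $\widetilde q_\varepsilon$ with $c=1$). Assembling these estimates verifies (1)--(5) and completes the construction; the symmetric-periodic extension $\mathfrak E_{sp}^+ q_\varepsilon$ is then $C^3$ on $2\mathbb T^n$ by Definition \ref{def:extension}, since $q_\varepsilon$ is locally constant near $z=0$ and equals $\varepsilon + z(2-z)$ near $z=1$ whose even reflection matches $C^3$ there.
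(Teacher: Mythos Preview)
Your construction is correct and follows essentially the same approach as the paper's own proof: both flatten the profile to a constant of order $\varepsilon$ on an interval of width $\sim\varepsilon$ near $z=0$, keep (a shift of) $z(2-z)$ away from $0$, and exploit the scale matching $q_\varepsilon\simeq\varepsilon$ on the transition region to absorb the $\ell_\varepsilon^{-k}$ factors in Property~5. The paper is in fact terser than you are---it simply prescribes $q_\varepsilon(z)=z(2-z)$ on $[\varepsilon/2,1]$, $q_\varepsilon(z)=\varepsilon/2$ on $[0,\varepsilon/4]$, and asserts that the interpolation on $(\varepsilon/4,\varepsilon/2)$ can be chosen to meet all five properties---so your explicit cutoff realization and the accompanying derivative bookkeeping are a more detailed execution of the same idea.
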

\begin{proof}
	Let $ q_\varepsilon $ be a $ C^\infty([0,1]) $ nondecreasing function satisfying,
	\begin{equation*}
		q_\varepsilon(z) =
			\begin{cases}
			z(2-z) & z \in [\dfrac{\varepsilon}{2}  , 1],\\
			\dfrac{\varepsilon}{2} & z \in [0,  \dfrac{\varepsilon}{4} ].
			\end{cases}
	\end{equation*}
	Then $ \dfrac{\varepsilon}{4} \leq q_\varepsilon(\dfrac{\varepsilon}{2})- q_\varepsilon(\dfrac{\varepsilon}{4}) \leq \varepsilon $, and
	\begin{equation*}
		q_\varepsilon'(z) = \begin{cases}
			2 (1-z) & z \in (\dfrac{\varepsilon}{2},1), \\
			0 & z \in (0,\dfrac{\varepsilon}{4} ),
		\end{cases}
		~~
		q_\varepsilon''(z) = \begin{cases}
			-2 & z \in (\dfrac{\varepsilon}{2},1), \\
			0 & z \in (0,\dfrac{\varepsilon}{4} ),
		\end{cases}
	\end{equation*}
	and $ \lim\limits_{z\rightarrow 1^-} q_\varepsilon^{(k)}(z)  = 0, ~ k \in \lbrace 1,3,4, \ldots \rbrace $. Then one can choose the values of $ q_{\varepsilon}(z) $ for $ z \in (\dfrac{\varepsilon}{4}, \dfrac{\varepsilon}{2}) $ such that $ q_\varepsilon $ satisfies Properties 1--5.
\end{proof}
\begin{remark}
	Property 4 in Lemma \ref{lm:approximating-z} implies that Hardy's inequalities in Lemma \ref{lm:hardy-type-ineq} can be applied with $ z+\varepsilon $ replaced by $ q_\varepsilon $.
\end{remark}

We take $ \rho_{\varepsilon} := q_\varepsilon^\alpha $. Then $ \lbrace \rho_{\varepsilon} \rbrace_{\varepsilon \in (0,1)} $ is an approximating sequence of $ \rho_\mathrm{pv} $. In addition, for any fixed $ \varepsilon \in (0,1) $, $ \rho_\varepsilon $ satisfies \eqref{smooth-density} and $ \inf_{\Omega}\rho_\varepsilon \geq (\dfrac{\varepsilon}{4})^\alpha > 0 $. We also choose initial data $ \lbrace u_{\varepsilon,in} \rbrace_{\varepsilon \in (0,1)} $ such that $ u_{\varepsilon,in} \rightarrow u_{in} $ in $ H^2 $, $ \dv(\rho_\varepsilon u_{\varepsilon,in}) = 0 $, and the compatibility conditions in \eqref{in:compatibility} hold with $ \rho, u_{in} $ replaced by $ \rho_{\varepsilon}, u_{\varepsilon, in} $.

We recall that $ u_{in} $ satisfies the compact support condition in \eqref{in:2d-anel-NS}. Thus for $ \varepsilon \in (0,1) $ small enough and $ q_\varepsilon $ as constructed in Lemma \ref{lm:approximating-z}, one can simply take $ u_{\varepsilon, in} = u_{in} $.

Then applying Theorem \ref{prop:local-exist-nonsingular}, we obtain a sequence of solutions to \eqref{eq:nonsingular-anel-NS} with $ \rho = \rho_{\varepsilon} $, which is denoted as $ \lbrace (u_\varepsilon, p_\varepsilon) \rbrace_{\varepsilon \in (0,1)} $. That is, for some $ T_\varepsilon^* \in (0,\infty) $,
\begin{equation}\label{eq:approximating}
	 \begin{cases}
        \rho_{\varepsilon} (\dt u_\varepsilon + u_\varepsilon \cdot \nabla u_\varepsilon ) + \rho_{\varepsilon} \nabla p_\varepsilon = \Delta u_\varepsilon & \text{in} ~ \Omega, \\
        \dv (\rho_{\varepsilon} u_\varepsilon) = 0 & \text{in} ~ \Omega,
    \end{cases}
\end{equation}
where $ \rho_\varepsilon = q_\varepsilon^\alpha $,
\begin{gather*}
u_\varepsilon \in L^\infty (0,T_\varepsilon^*; H^2(\Omega))\cap L^2(0,T_\varepsilon^*;H^3(\Omega)), \\
\dt u_\varepsilon \in L^\infty (0,T_\varepsilon^*;L^2(\Omega))\cap L^2(0,T^*_\varepsilon; H^1(\Omega)), \\
\nabla p_\varepsilon \in L^\infty (0,T_\varepsilon^*; H^1(\Omega))\cap L^2(0,T_\varepsilon^*;H^2(\Omega)).
\end{gather*}
The boundary condition \eqref{bc:2d-anel-NS} is satisfied with $ u $ replaced by $ u_\varepsilon $, and $ p_\varepsilon $ is chosen such that
\begin{equation}\label{pressure:bc-approximating}
	\dz p_\varepsilon \big|_{z = 0,1} = 0, ~~ \text{and} ~~  \int_\Omega p_\varepsilon \idx =0.
\end{equation}
After passing the limit $ \varepsilon \rightarrow 0^+ $, we will obtain a solution to \eqref{eq:physicalvacuum-anel-NS}.
We establish the required uniform estimates in the following two lemmas.

\begin{lm}\label{lm:local-evol-est}
	For any fixed $ \varepsilon \in (0,1) $, assume that $ \alpha > 1 $ and $ (u_\varepsilon, p_\varepsilon) $ is the solution to \eqref{eq:approximating} as mentioned above. There exists a constant $ T \in (0,\infty) $ independent of $ \varepsilon $, such that the following estimates hold:
	\begin{equation}\label{lene:evo-01}
	\begin{aligned}
	& \sup_{0\leq t\leq T} \bigl( \norm{q_\varepsilon^{\alpha/2}u_\varepsilon (t)}{\Lnorm{2}}^2 + \norm{\nabla u_\varepsilon (t)}{\Lnorm{2}}^2 + \norm{q_\varepsilon^{\alpha/2} u_{\varepsilon,t}(t)}{\Lnorm{2}}^2 \bigr) \\
	& ~~~~  + \int_0^T \bigl( \norm{\nabla u_\varepsilon(t)}{\Lnorm{2}}^2 + \norm{q_\varepsilon^{\alpha/2}u_{\varepsilon,t}(t)}{\Lnorm{2}}^2 + \norm{\nabla u_{\varepsilon,t}(t)}{\Lnorm{2}}^2 \bigr) \,dt \leq C_{in},
	\end{aligned}
	\end{equation}
	where $ C_{in} $ is a constant depending only on initial data.
\end{lm}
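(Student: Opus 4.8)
The plan is to run the same energy hierarchy that was used in Steps~1--2 of Theorem~\ref{prop:local-exist-nonsingular} (the basic $L^2$ estimate, followed by the $\dt u$-testing estimate), but now keeping careful track of the weights $q_\varepsilon^\alpha$ so that all constants are independent of $\varepsilon$. The point is that $\rho_\varepsilon = q_\varepsilon^\alpha$ is, for each fixed $\varepsilon$, smooth and strictly positive, so all the manipulations below are already justified rigorously by Theorem~\ref{prop:local-exist-nonsingular}; the content is purely the $\varepsilon$-uniformity.

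First I would take the $L^2(\rho_\varepsilon\idx)$-inner product of \subeqref{eq:approximating}{1} with $u_\varepsilon$. Using \subeqref{eq:approximating}{2} to cancel the pressure term $\int \rho_\varepsilon \nabla p_\varepsilon \cdot u_\varepsilon \idx = 0$ and to cancel the convective term $\int \rho_\varepsilon (u_\varepsilon\cdot\nabla) u_\varepsilon \cdot u_\varepsilon \idx = 0$ (the latter because $\dv(\rho_\varepsilon u_\varepsilon)=0$ and the boundary terms vanish by \eqref{bc:2d-anel-NS}), this gives the clean identity
\begin{equation*}
	\dfrac{1}{2}\dfrac{d}{dt}\norm{q_\varepsilon^{\alpha/2} u_\varepsilon}{\Lnorm{2}}^2 + \norm{\nabla u_\varepsilon}{\Lnorm{2}}^2 = 0,
\end{equation*}
so in fact $\sup_t \norm{q_\varepsilon^{\alpha/2}u_\varepsilon(t)}{\Lnorm{2}}^2 + \int_0^T \norm{\nabla u_\varepsilon}{\Lnorm{2}}^2 \leq \norm{q_\varepsilon^{\alpha/2} u_{\varepsilon,in}}{\Lnorm{2}}^2 \leq C_{in}$ for all $T$, using Property~4 of Lemma~\ref{lm:approximating-z} to bound $q_\varepsilon^\alpha \lesssim (z+\varepsilon)^\alpha \lesssim 1$ and $u_{\varepsilon,in}\to u_{in}$ in $H^2$. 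Next I would test \subeqref{eq:approximating}{1} with $u_{\varepsilon,t}$ (equivalently, differentiate in $t$ and test with $u_{\varepsilon,t}$, as in \eqref{ene-periodic-t}). Testing with $u_{\varepsilon,t}$ directly yields
\begin{equation*}
	\dfrac{1}{2}\dfrac{d}{dt}\norm{\nabla u_\varepsilon}{\Lnorm{2}}^2 + \norm{q_\varepsilon^{\alpha/2} u_{\varepsilon,t}}{\Lnorm{2}}^2 = -\int \rho_\varepsilon (u_\varepsilon\cdot\nabla) u_\varepsilon \cdot u_{\varepsilon,t}\idx,
\end{equation*}
where the pressure term is handled by writing $\int \rho_\varepsilon\nabla p_\varepsilon\cdot u_{\varepsilon,t} = -\int \dv(\rho_\varepsilon u_{\varepsilon,t})\,p_\varepsilon = 0$ after differentiating \subeqref{eq:approximating}{2} in $t$. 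The nonlinear term is estimated by $\norm{\rho_\varepsilon^{1/2} u_\varepsilon}{\Lnorm{\infty}}$-type bounds combined with $\norm{\nabla u_\varepsilon}{\Lnorm{2}}$ and absorbed via Young's inequality into $\norm{q_\varepsilon^{\alpha/2}u_{\varepsilon,t}}{\Lnorm{2}}^2$; the surviving factor is a power of $\norm{\nabla u_\varepsilon}{\Lnorm{2}}$, giving a differential inequality $\frac{d}{dt}\norm{\nabla u_\varepsilon}{\Lnorm{2}}^2 + \norm{q_\varepsilon^{\alpha/2}u_{\varepsilon,t}}{\Lnorm{2}}^2 \lesssim P(\norm{\nabla u_\varepsilon}{\Lnorm{2}}^2)$ for some polynomial $P$, which closes on a short time interval $[0,T]$ with $T$ depending only on $\norm{\nabla u_{in}}{\Lnorm{2}}$, hence independent of $\varepsilon$.

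Finally, to control $\norm{\nabla u_{\varepsilon,t}}{\Lnorm{2}}$ I would differentiate \subeqref{eq:approximating}{1} in $t$ to get the analogue of \eqref{eq:periodic-t}, namely $\rho_\varepsilon(\dt u_{\varepsilon,t} + u_\varepsilon\cdot\nabla u_{\varepsilon,t} + u_{\varepsilon,t}\cdot\nabla u_\varepsilon) + \rho_\varepsilon\nabla p_{\varepsilon,t} = \Delta u_{\varepsilon,t}$, and test with $u_{\varepsilon,t}$ to obtain $\frac{1}{2}\frac{d}{dt}\norm{q_\varepsilon^{\alpha/2}u_{\varepsilon,t}}{\Lnorm{2}}^2 + \norm{\nabla u_{\varepsilon,t}}{\Lnorm{2}}^2 = -\int \rho_\varepsilon(u_{\varepsilon,t}\cdot\nabla)u_\varepsilon\cdot u_{\varepsilon,t}\idx$; the right side is bounded, after a Sobolev/interpolation argument in the spirit of \eqref{ene-periodic-t-2}, by $\big(\norm{\nabla u_\varepsilon}{\Lnorm{2}}^2 + \norm{\nabla u_\varepsilon}{\Lnorm{2}}^4\big)\norm{q_\varepsilon^{\alpha/2}u_{\varepsilon,t}}{\Lnorm{2}}^2 + \frac{1}{2}\norm{\nabla u_{\varepsilon,t}}{\Lnorm{2}}^2$, and Gr\"onwall's inequality on $[0,T]$ together with the bound on $\norm{\nabla u_\varepsilon}{\Lnorm{2}}$ already obtained closes the estimate. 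The crucial initialization here is $\norm{q_\varepsilon^{\alpha/2}u_{\varepsilon,t}(0)}{\Lnorm{2}} = \norm{\rho_\varepsilon^{1/2}u_{\varepsilon,in,1}}{\Lnorm{2}} \leq C_{in}$, which is exactly the compatibility condition \eqref{in:compatibility} imposed on $u_{\varepsilon,in}$ (uniform in $\varepsilon$ since $u_{\varepsilon,in}=u_{in}$ for $\varepsilon$ small by the compact support of $u_{in}$, and $\rho_\varepsilon\to\rho_{\mathrm{pv}}$ boundedly).

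The step I expect to be the main obstacle is keeping the nonlinear estimates genuinely $\varepsilon$-independent: the weights $q_\varepsilon^\alpha$ degenerate as $\varepsilon\to 0$, so one cannot use $\inf_\Omega\rho_\varepsilon>0$ anywhere, and every appearance of $u_\varepsilon$ or $u_{\varepsilon,t}$ multiplied by $\rho_\varepsilon$ must be paired with the weight via $\norm{q_\varepsilon^{\alpha/2}\cdot}{\Lnorm{2}}$ rather than $\norm{\cdot}{\Lnorm{2}}$, while the unweighted viscous terms $\norm{\nabla u_\varepsilon}{\Lnorm{2}}$, $\norm{\nabla u_{\varepsilon,t}}{\Lnorm{2}}$ remain full-strength. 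At this level the argument only uses $\alpha>1$ (so that $\rho_\varepsilon$ is bounded and the weighted $L^2$ norms of the data are finite and $\varepsilon$-uniform); the more delicate weighted estimates requiring $\alpha>3/2$ — involving $\rho_{\mathrm{pv}}\partial_{zz}u$, $\rho_{\mathrm{pv}}^2\nabla p$, and Hardy's inequalities from Lemma~\ref{lm:hardy-type-ineq} — are deferred to the next lemma, so here the difficulty is confined to the bookkeeping of weights in the two convective terms.
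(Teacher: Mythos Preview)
Your overall framework matches the paper's: the same three energy identities (test with $u_\varepsilon$, with $u_{\varepsilon,t}$, differentiate in $t$ and test with $u_{\varepsilon,t}$) are exactly what is used. The gap is in how you propose to close them. You claim the second identity alone yields
\[
\dfrac{d}{dt}\norm{\nabla u_\varepsilon}{\Lnorm{2}}^2 + \norm{q_\varepsilon^{\alpha/2}u_{\varepsilon,t}}{\Lnorm{2}}^2 \lesssim P\bigl(\norm{\nabla u_\varepsilon}{\Lnorm{2}}^2\bigr),
\]
via an $\varepsilon$-uniform bound on $\norm{q_\varepsilon^{\alpha/2}u_\varepsilon}{\Lnorm{\infty}}$. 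But such an $L^\infty$ bound does \emph{not} follow from $\norm{\nabla u_\varepsilon}{\Lnorm{2}}$ and $\norm{q_\varepsilon^{\alpha/2}u_\varepsilon}{\Lnorm{2}}$ alone; it would require $H^2$-type control not yet available. If instead you estimate the convective term as the paper does, splitting the weight and using $\norm{q_\varepsilon^{\alpha/2}u_{\varepsilon,t}}{\Lnorm{4}}$ (in 2D) or $\norm{q_\varepsilon^{\alpha/2}u_{\varepsilon,t}}{\Lnorm{3}}$ (in 3D), the Sobolev embedding of that factor necessarily introduces $\norm{\nabla u_{\varepsilon,t}}{\Lnorm{2}}$ on the right, and there is no such dissipation in the second identity. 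Consequently the second estimate cannot close by itself; all three identities must be summed and closed \emph{simultaneously} as a single differential inequality for the combined energy $\norm{q_\varepsilon^{\alpha/2}u_\varepsilon}{\Lnorm{2}}^2+\norm{\nabla u_\varepsilon}{\Lnorm{2}}^2+\norm{q_\varepsilon^{\alpha/2}u_{\varepsilon,t}}{\Lnorm{2}}^2$, with $\norm{\nabla u_{\varepsilon,t}}{\Lnorm{2}}^2$ absorbed by the dissipation coming from the third identity. Your sequential scheme (close the second, then Gr\"onwall the third) does not work as stated.

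A secondary point: you say $\alpha>1$ enters only to make $\rho_\varepsilon$ bounded and the weighted data finite, and that Hardy's inequalities are deferred to the next lemma. In fact the condition $\alpha>1$ is used precisely here, through the Hardy-type inequality of Lemma~\ref{lm:hardy-type-ineq}: to pass from $\norm{(z+\varepsilon)^{\alpha/2}u_{\varepsilon,t}}{\Hnorm{1}}$ (which appears in the Sobolev embedding) back to $\norm{q_\varepsilon^{\alpha/2}u_{\varepsilon,t}}{\Lnorm{2}}+\norm{\nabla u_{\varepsilon,t}}{\Lnorm{2}}$, one needs to control the commutator term $\norm{(z+\varepsilon)^{\alpha/2-1}u_{\varepsilon,t}}{\Lnorm{2}}$, and this requires \eqref{hardy-1} with exponent $\alpha-2>-1$, i.e.\ $\alpha>1$. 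Without this, the weighted $L^4$ (or $L^3$, $L^6$) bounds on $u_\varepsilon$ and $u_{\varepsilon,t}$ are not $\varepsilon$-uniform.
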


\begin{proof}
	Taking the $ L^2 $-inner product of \subeqref{eq:approximating}{1} with $ u_\varepsilon $ implies, after substituting \subeqref{eq:approximating}{2} and \eqref{bc:2d-anel-NS},
\begin{equation}\label{lene:000}
    \dfrac{1}{2} \dfrac{d}{dt} \int q_\varepsilon^\alpha |u_\varepsilon|^2 \idx + \int |\nabla u_\varepsilon|^2 \idx = 0.
\end{equation}
In the meantime, the $ L^2 $-inner product of \subeqref{eq:approximating}{1} with $ u_{\varepsilon,t} $ implies, similarly,
\begin{equation}\label{lene:001}
    \dfrac{1}{2} \dfrac{d}{dt} \int |\nabla u_\varepsilon|^2 \idx + \int q_\varepsilon^\alpha |u_{\varepsilon,t}|^2 \idx = - \int q_\varepsilon^\alpha u_\varepsilon \cdot \nabla u_\varepsilon \cdot u_{\varepsilon, t} \idx.
\end{equation}
The right-hand side of \eqref{lene:001} can be estimated as follows,
\begin{equation}\label{lene:0010}
\begin{aligned}
    & - \int q_\varepsilon^\alpha u_\varepsilon \cdot \nabla u_\varepsilon \cdot u_{\varepsilon, t} \idx \lesssim
    \begin{cases}
	    \norm{\nabla u_\varepsilon}{\Lnorm{2}} \norm{q_\varepsilon^{\alpha/2} u_\varepsilon}{\Lnorm{4}} \norm{q_\varepsilon^{\alpha/2} u_{\varepsilon,t}}{\Lnorm{4}} & \text{when $ n = 2 $} \\
	   \norm{\nabla u_\varepsilon}{\Lnorm{2}} \norm{q_\varepsilon^{\alpha/2} u_\varepsilon}{\Lnorm{6}} \norm{q_\varepsilon^{\alpha/2} u_{\varepsilon
	   ,t}}{\Lnorm{3}} & \text{when $ n = 3$}
    \end{cases}\\
    & ~~~~ \lesssim \norm{\nabla u_\varepsilon}{\Lnorm{2}} \norm{(z+\varepsilon)^{\alpha/2} u_\varepsilon}{\Hnorm{1}} \norm{(z+\varepsilon)^{\alpha/2} u_{\varepsilon, t}}{\Lnorm{2}}^{1/2}  \norm{(z+\varepsilon)^{\alpha/2} u_{\varepsilon,t}}{\Hnorm{1}}^{1/2},
\end{aligned}
\end{equation}
where we have used Property 4 in Lemma \ref{lm:approximating-z}.
Notice, after applying the Sobolev embedding inequality and the Hardy-type inequality in Lemma \ref{lm:hardy-type-ineq}, one can derive
\begin{equation}\label{ineq:H1-embedding}
\begin{aligned}
	& \norm{(z+\varepsilon)^{\alpha/2} u_{\varepsilon,t}}{\Hnorm{1}} \lesssim \norm{(z+\varepsilon)^{\alpha/2} u_{\varepsilon,t}}{\Lnorm{2}} + \norm{(z+\varepsilon)^{\alpha/2} \nabla u_{\varepsilon, t}}{\Lnorm{2}} \\
	& ~~~~ + \norm{(z+\varepsilon)^{\alpha/2-1} u_{\varepsilon, t}}{\Lnorm{2}}
	 \lesssim \norm{(z+\varepsilon)^{\alpha/2} u_{\varepsilon,t}}{\Lnorm{2}} + \norm{(z+\varepsilon)^{\alpha/2} \nabla u_{\varepsilon,t}}{\Lnorm{2}} \\
	 & ~~~~ \lesssim \norm{q_\varepsilon^{\alpha/2} u_{\varepsilon,t}}{\Lnorm{2}} + \norm{\nabla u_{\varepsilon,t}}{\Lnorm{2}} , ~~~~ \text{and similarly} \\
	& \norm{(z+\varepsilon)^{\alpha/2} u_\varepsilon}{\Hnorm{1}} \lesssim \norm{q_\varepsilon^{\alpha/2} u_\varepsilon}{\Lnorm{2}} + \norm{\nabla u_\varepsilon}{\Lnorm{2}},
\end{aligned}
\end{equation}
provided $ \alpha/2-1 > -1/2 $, or equivalently, $ \alpha > 1 $, where we have used Property 4 in Lemma \ref{lm:approximating-z}.

On the other hand, after applying a time derivative to \subeqref{eq:approximating}{1}, the resulting equation is
\begin{equation}\label{eq:dt-ns}
    q_\varepsilon^\alpha (\dt u_{\varepsilon, t} + u_\varepsilon \cdot \nabla u_{\varepsilon, t} + u_{\varepsilon, t} \cdot \nabla u_\varepsilon) + q_\varepsilon ^\alpha \nabla p_{\varepsilon, t} = \Delta u_{\varepsilon, t}.
\end{equation}
Then after taking the $ L^2 $-inner product of \eqref{eq:dt-ns} with $ u_t $, the result is
\begin{equation}\label{lene:002}
    \dfrac{1}{2} \dfrac{d}{dt} \int q_\varepsilon^\alpha |u_{\varepsilon, t}|^2 \idx + \int |\nabla u_{\varepsilon,t}|^2 \idx = - \int q_\varepsilon^\alpha (u_{\varepsilon, t} \cdot \nabla) u_\varepsilon \cdot u_{\varepsilon,t} \idx.
\end{equation}
Similarly, the right-hand side of \eqref{eq:dt-ns} can be estimated as follows,
\begin{equation}\label{lene:0020}
\begin{aligned}
    & - \int q_\varepsilon^\alpha (u_{\varepsilon, t} \cdot \nabla) u_\varepsilon \cdot u_{\varepsilon,t} \idx = \int q_\varepsilon^\alpha (u_{\varepsilon, t} \cdot \nabla ) u_{\varepsilon, t} \cdot u_\varepsilon \idx \\
    & ~~~~ ~~~~ \lesssim
    \begin{cases}
	    \norm{q_\varepsilon^{\alpha/2} u_{\varepsilon,t}}{\Lnorm{4}} \norm{q_\varepsilon^{\alpha/2} u_\varepsilon}{\Lnorm{4}} \norm{\nabla u_{\varepsilon, t}}{\Lnorm{2}} &\text{when $ n=2 $}\\
	    \norm{q_\varepsilon^{\alpha/2} u_{\varepsilon,t}}{\Lnorm{3}} \norm{q_\varepsilon^{\alpha/2} u_\varepsilon}{\Lnorm{6}} \norm{\nabla u_{\varepsilon,t}}{\Lnorm{2}} &\text{when $ n=3$}
	 \end{cases}\\
    & ~~~~ ~~~~ \lesssim \norm{(z+\varepsilon)^{\alpha/2}u_\varepsilon}{\Hnorm{1}} \norm{(z+\varepsilon)^{\alpha/2}u_{\varepsilon, t}}{\Lnorm{2}}^{1/2} \norm{(z+\varepsilon)^{\alpha/2}u_{\varepsilon,t}}{\Hnorm{1}}^{1/2} \norm{\nabla u_{\varepsilon, t}}{\Lnorm{2}}.
\end{aligned}
\end{equation}

Therefore, combining \eqref{lene:000}, \eqref{lene:001}, \eqref{lene:0010}, \eqref{ineq:H1-embedding}, \eqref{lene:002} and \eqref{lene:0020} gives us
\begin{align*}
	& \dfrac{d}{dt} \bigl( \norm{q_\varepsilon^{\alpha/2}u_\varepsilon}{\Lnorm{2}}^2 + \norm{\nabla u_\varepsilon}{\Lnorm{2}}^2 + \norm{q_\varepsilon^{\alpha/2} u_{\varepsilon, t}}{\Lnorm{2}} \bigr) + \norm{\nabla u_\varepsilon}{\Lnorm{2}}^2 \\
	& ~~~~ + \norm{q_\varepsilon^{\alpha/2}u_{\varepsilon,t}}{\Lnorm{2}}^2
	 + \norm{\nabla u_{\varepsilon,t}}{\Lnorm{2}}^2 \lesssim \norm{q_\varepsilon^{\alpha/2}u_{\varepsilon,t}}{\Lnorm{2}}^2 \bigl( \norm{q_\varepsilon^{\alpha/2} u_\varepsilon}{\Lnorm{2}}^4 + \norm{\nabla u_\varepsilon}{\Lnorm{2}}^4 \bigr),
\end{align*}
where we have used Property 4 in Lemma \ref{lm:approximating-z} and applied Young's inequality. In particular, the above 
yields \eqref{lene:evo-01} for a short time.
\end{proof}

Next, to obtain the estimates of the spatial derivatives of $ u_\varepsilon $ requires a little work. In fact, we shall proceed with the following steps:
1. obtain estimate for the horizontal derivative; 2. obtain estimate for the pressure; 3. obtain estimate for the $ L^2 $-norm of $ \partial_{zz} u_\varepsilon $. In conclusion, we will obtain the following:
\begin{lm}\label{lm:local-ellip-est}
	In addition to the assumptions in Lemma \ref{lm:local-evol-est},
	assume that $ \alpha > 3/2 $. Then
	\begin{equation}\label{lene:H^2}
	\begin{aligned}
		& \norm{\nabla \nabla_h u_\varepsilon}{\Lnorm{2}} + \norm{q_\varepsilon^\alpha \partial_{zz}u_\varepsilon}{\Lnorm{2}} + \norm{q_\varepsilon^{2\alpha}\nabla p_\varepsilon}{\Lnorm{2}} \\
		& ~~~~ \lesssim \norm{q_\varepsilon^{\alpha/2} u_{\varepsilon,t}}{\Lnorm{2}} + \norm{u_\varepsilon}{\Hnorm{1}} + \norm{u_\varepsilon}{\Hnorm{1}}^3.
	\end{aligned}	
	\end{equation}
	In particular, \eqref{lene:H^2} together with \eqref{lene:evo-01} yields,
	\begin{equation}\label{lene:energy-total}
	\begin{aligned}
	& \sup_{0\leq t\leq T} \bigl( \norm{u_\varepsilon(t)}{\Hnorm{1}}^2 + \norm{\nabla \nabla_h u_\varepsilon(t)}{\Lnorm{2}}^2 + \norm{q_\varepsilon^\alpha\partial_{zz} u_\varepsilon(t)}{\Lnorm{2}}^2 \\
	& ~~~~ + \norm{q_\varepsilon^{\alpha/2} u_{\varepsilon,t}(t)}{\Lnorm{2}}
	 + \norm{q_\varepsilon^{2\alpha} \nabla p_\varepsilon(t)}{\Lnorm{2}} \bigr) \\
	 & ~~~~
	+ \int_0^T \bigl( \norm{\nabla u_\varepsilon(t)}{\Lnorm{2}}^2 + \norm{u_{\varepsilon,t}(t)}{\Hnorm{1}}^2\bigr) \,dt \leq C_{in},
	\end{aligned}
	\end{equation}
	where $ T $ is the same as in \eqref{lene:evo-01}, and $ C_{in} $ is some constant depending only on initial data and is independent of $ \varepsilon $.
\end{lm}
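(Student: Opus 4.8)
Throughout I would work with the Stokes (pressure--viscosity) form of \subeqref{eq:approximating}{1},
\begin{equation*}
	- \Delta u_\varepsilon + q_\varepsilon^\alpha \nabla p_\varepsilon = - q_\varepsilon^\alpha \bigl( u_{\varepsilon,t} + u_\varepsilon\cdot\nabla u_\varepsilon \bigr) =: q_\varepsilon^\alpha F_\varepsilon ,
\end{equation*}
and use repeatedly that, by Lemma \ref{lm:local-evol-est}, the Hardy--Sobolev inequality \eqref{ineq:H1-embedding}, Lemma \ref{lm:approximating-z}, and the two-- and three--dimensional Sobolev embeddings, every weighted $ \Lnorm{2} $--norm of $ F_\varepsilon $ that occurs below is bounded by $ \norm{q_\varepsilon^{\alpha/2} u_{\varepsilon,t}}{\Lnorm{2}} + \norm{u_\varepsilon}{\Hnorm{1}} + \norm{u_\varepsilon}{\Hnorm{1}}^3 $; the cubic term is the contribution of the convective nonlinearity, and controlling it forces one to integrate by parts against $ q_\varepsilon^\alpha $--weighted fields so as to invoke the constraint $ \dv(q_\varepsilon^\alpha u_\varepsilon) = 0 $ and thereby avoid $ \Lnorm{4} $--norms of second derivatives. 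The estimates are formal on the limiting profile but are justified on each approximating level, where $ \rho_\varepsilon $ is non-degenerate; what matters is that the constants are independent of $ \varepsilon $.

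\textbf{Horizontal second derivatives.} Since $ q_\varepsilon^\alpha $ depends on $ z $ only it commutes with $ \nabla_h $, and, crucially, $ \dv(q_\varepsilon^\alpha \nabla_h u_\varepsilon) = \nabla_h \dv(q_\varepsilon^\alpha u_\varepsilon) = 0 $. Applying $ \nabla_h $ to the Stokes equation and taking the $ \Lnorm{2} $--inner product with $ \nabla_h u_\varepsilon $, the pressure term equals $ -\int \nabla_h p_\varepsilon\, \dv(q_\varepsilon^\alpha \nabla_h u_\varepsilon) \idx = 0 $, the boundary contribution over $ \lbrace z = 0,1 \rbrace $ vanishing because $ w_\varepsilon|_{z=0,1} = 0 $ forces $ \nabla_h w_\varepsilon|_{z=0,1} = 0 $ while the horizontal variables are periodic. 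One is left with $ \norm{\nabla \nabla_h u_\varepsilon}{\Lnorm{2}}^2 = \int q_\varepsilon^\alpha \nabla_h F_\varepsilon \cdot \nabla_h u_\varepsilon \idx $; after one integration by parts (again using $ \dv(q_\varepsilon^\alpha u_\varepsilon) = 0 $ to annihilate the transport term) and Young's inequality, this yields $ \norm{\nabla \nabla_h u_\varepsilon}{\Lnorm{2}} \lesssim \norm{q_\varepsilon^{\alpha/2} u_{\varepsilon,t}}{\Lnorm{2}} + \norm{u_\varepsilon}{\Hnorm{1}} + \norm{u_\varepsilon}{\Hnorm{1}}^2 $.

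\textbf{Pressure and $ \partial_{zz}u_\varepsilon $.} The vertical component is harmless: from the $ w $--momentum equation, $ q_\varepsilon^{2\alpha}\partial_z p_\varepsilon = q_\varepsilon^\alpha \Delta_h w_\varepsilon + q_\varepsilon^\alpha \partial_{zz} w_\varepsilon - q_\varepsilon^{2\alpha}(u_{\varepsilon,t}+u_\varepsilon\cdot\nabla u_\varepsilon)^w $, and $ q_\varepsilon^\alpha \partial_{zz} w_\varepsilon $ is recovered from the constraint $ \dv u_\varepsilon = -\alpha (q_\varepsilon'/q_\varepsilon)\, w_\varepsilon $, which expresses $ \partial_{zz} w_\varepsilon $ through $ \nabla_h $--derivatives (controlled above) and through the singular coefficients $ q_\varepsilon'/q_\varepsilon,\ (q_\varepsilon'/q_\varepsilon)' $ of Lemma \ref{lm:approximating-z}(5) acting on $ w_\varepsilon,\partial_z w_\varepsilon $, which are absorbed by the Hardy--type inequalities of Lemma \ref{lm:hardy-type-ineq} (with $ z+\varepsilon $ replaced by $ q_\varepsilon $ using Lemma \ref{lm:approximating-z}(4)). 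The genuine issue is $ \nabla_h p_\varepsilon $ and, coupled to it, $ \partial_{zz} v_\varepsilon $. I would obtain $ \norm{q_\varepsilon^{2\alpha}\nabla p_\varepsilon}{\Lnorm{2}} $ by taking the divergence of a $ q_\varepsilon^\alpha $--multiple of the Stokes equation, using $ \dv(q_\varepsilon^\alpha u_\varepsilon)=0 $, and testing the resulting degenerate elliptic equation against $ p_\varepsilon $ (or an appropriate weighted multiple): this produces $ \int q_\varepsilon^{2\alpha}|\nabla p_\varepsilon|^2 \idx $ on the left, the boundary terms vanishing by $ \partial_z p_\varepsilon|_{z=0,1}=0 $ and $ \Delta w_\varepsilon|_{z=0,1}=0 $ (the latter read off from the $ w $--momentum equation and the boundary conditions), while on the right a term of the type $ \int q_\varepsilon^\alpha \nabla p_\varepsilon \cdot \Delta u_\varepsilon \idx $ is reorganized by integration by parts and the constraint so that the vertical second derivatives of $ v_\varepsilon $ are replaced by $ \nabla_h $--derivatives, by vertical derivatives of $ w_\varepsilon $, and by the coefficients $ q_\varepsilon'/q_\varepsilon,(q_\varepsilon'/q_\varepsilon)',(q_\varepsilon'/q_\varepsilon)'' $; the weights $ q_\varepsilon^{\alpha-2},q_\varepsilon^{\alpha-3} $ so generated are absorbed via Lemma \ref{lm:hardy-type-ineq}, and it is precisely here that $ \alpha > 3/2 $ is needed to keep those Hardy exponents admissible, while the copy of $ \int q_\varepsilon^{2\alpha}|\nabla p_\varepsilon|^2\idx $ arising in the process is absorbed by Young's inequality. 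This gives $ \norm{q_\varepsilon^{2\alpha}\nabla p_\varepsilon}{\Lnorm{2}} \lesssim \norm{\nabla \nabla_h u_\varepsilon}{\Lnorm{2}} + \norm{q_\varepsilon^{\alpha/2}u_{\varepsilon,t}}{\Lnorm{2}} + \norm{u_\varepsilon}{\Hnorm{1}} + \norm{u_\varepsilon}{\Hnorm{1}}^3 $. Then $ q_\varepsilon^\alpha \partial_{zz}u_\varepsilon = q_\varepsilon^{2\alpha}\nabla p_\varepsilon + q_\varepsilon^{2\alpha}(u_{\varepsilon,t}+u_\varepsilon\cdot\nabla u_\varepsilon) - q_\varepsilon^\alpha \Delta_h u_\varepsilon $ bounds $ \norm{q_\varepsilon^\alpha \partial_{zz}u_\varepsilon}{\Lnorm{2}} $ by the same quantities, and inserting the horizontal estimate into both bounds establishes \eqref{lene:H^2}. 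Inequality \eqref{lene:energy-total} then follows by adding \eqref{lene:H^2} to \eqref{lene:evo-01}, since every term on the right of \eqref{lene:H^2} is already dominated by $ C_{in} $, uniformly in $ \varepsilon $ and on $ [0,T] $, by Lemma \ref{lm:local-evol-est}.

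\textbf{Main obstacle.} I expect the pressure step to be the crux. Unlike $ \partial_{zz} w_\varepsilon $, which the anelastic constraint recovers from $ \nabla_h u_\varepsilon $ and $ w_\varepsilon $, the vertical second derivative of the horizontal velocity and the horizontal pressure gradient are genuinely coupled and both degenerate at $ \lbrace z = 0 \rbrace $, so the estimate only closes if, in each integration by parts, the weight is split so that the surviving singular powers fall in the Hardy--admissible range; this bookkeeping, together with the threshold $ \alpha > 3/2 $, is the heart of the matter.
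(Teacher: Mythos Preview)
Your three--step skeleton (horizontal derivatives, pressure, then $\partial_{zz}u_\varepsilon$) is exactly the paper's, and your treatment of the horizontal step is in fact slightly cleaner than the paper's: by integrating once more and using $\dv(q_\varepsilon^\alpha u_\varepsilon)=0$ to kill the transport term $-\int q_\varepsilon^\alpha(u_\varepsilon\cdot\nabla)\nabla_h u_\varepsilon:\nabla_h u_\varepsilon\idx$, you close the horizontal bound directly, whereas the paper keeps a residual factor $\norm{(z+\varepsilon)^\alpha\nabla u_\varepsilon}{\Hnorm{1}}^{1/2}$ on the right and only closes at the very end after combining with Steps 2 and 3.

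The pressure step, however, is where your sketch is too loose and partly mis-aimed. Two concrete points. First, the multiplier: testing the divergence of a $q_\varepsilon^\alpha$--multiple of the momentum equation against $p_\varepsilon$ produces $\int q_\varepsilon^{2\alpha}|\nabla p_\varepsilon|^2\idx=\norm{q_\varepsilon^{\alpha}\nabla p_\varepsilon}{\Lnorm{2}}^2$, not $\norm{q_\varepsilon^{2\alpha}\nabla p_\varepsilon}{\Lnorm{2}}^2$; the paper multiplies by $q_\varepsilon^{3\alpha}$ so that $\dv(q_\varepsilon^{4\alpha}\nabla p_\varepsilon)$ appears and testing gives the correct $\norm{q_\varepsilon^{2\alpha}\nabla p_\varepsilon}{\Lnorm{2}}^2$. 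Second, and more substantively, your plan to ``replace the vertical second derivatives of $v_\varepsilon$ by $\nabla_h$--derivatives'' via the constraint does not work as stated: the anelastic relation gives $\partial_z w_\varepsilon$ in terms of $\dvh v_\varepsilon$ and $w_\varepsilon$, but says nothing about $\partial_{zz} v_\varepsilon$. The paper's device is different: write $q_\varepsilon^\alpha\Delta u_\varepsilon=\Delta(q_\varepsilon^\alpha u_\varepsilon)-2(q_\varepsilon^\alpha)'\partial_z u_\varepsilon-(q_\varepsilon^\alpha)'' u_\varepsilon$ so that after taking the divergence the top--order viscous piece $\dv\Delta(q_\varepsilon^\alpha u_\varepsilon)$ drops out (it equals $\Delta\dv(q_\varepsilon^\alpha u_\varepsilon)=0$), leaving only first--order terms in $u_\varepsilon$ with coefficients built from $q_\varepsilon^{(k)}$. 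Those terms are then paired with $p_\varepsilon$, the vertical--integral representation $q_\varepsilon^\alpha w_\varepsilon=-\int_0^z q_\varepsilon^\alpha\dvh v_\varepsilon\,dz'$ is substituted, and one integration by parts in $x$ converts every pressure factor into $\nabla_h p_\varepsilon$, which pairs with $q_\varepsilon^{2\alpha}$ for absorption. The most singular surviving weight in front of the velocity is $(z+\varepsilon)^{-3}$ acting on the primitive $\int_0^z q_\varepsilon^\alpha v_\varepsilon\,dz'$, which vanishes at $z=0$; this is what makes Hardy's inequality \eqref{hardy-2} applicable and is precisely where $\alpha>3/2$ enters (through $\norm{(z+\varepsilon)^{\alpha-2}v_\varepsilon}{\Lnorm{2}}\lesssim\norm{u_\varepsilon}{\Hnorm{1}}$). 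Your generic ``$q_\varepsilon^{\alpha-3}$ acting on $u_\varepsilon$'' would not be Hardy--admissible, because $v_\varepsilon$ does not vanish at $z=0$. Once the pressure is controlled this way, your final step recovering $q_\varepsilon^\alpha\partial_{zz}u_\varepsilon$ from the equation is exactly the paper's.
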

\begin{proof}
	As mentioned above, we establish the proof in three steps.
{\par\noindent\bf Step 1:} Obtain estimate for the horizontal derivative.
Taking the $ L^2 $-inner product of \subeqref{eq:approximating}{1} with $ \Delta_h u_{\varepsilon} $ implies
\begin{equation}\label{lene:003}
    \norm{\nabla \nabla_h u_{\varepsilon}}{\Lnorm{2}}^2 = \int q_\varepsilon^{\alpha} \dt u_\varepsilon \cdot \Delta_h u_{\varepsilon} \idx + \int q_\varepsilon^\alpha (u_\varepsilon \cdot \nabla) u_\varepsilon \cdot \Delta_h u_{\varepsilon} \idx.
\end{equation}
Then, applying H\"older's and the Sobolev embedding inequalities to the right-hand side of \eqref{lene:003} yields that, together with Property 4 in Lemma \ref{lm:approximating-z},
\begin{align*}
    & \int q_\varepsilon^{\alpha} \dt u_\varepsilon \cdot \Delta_h u_{\varepsilon} \idx \lesssim \norm{q_\varepsilon^{\alpha/2}\dt u_\varepsilon }{\Lnorm{2}} \norm{q_\varepsilon^{\alpha/2} \nabla \nabla_h u_{\varepsilon}}{\Lnorm{2}} \\
    & ~~~~ \lesssim \norm{q_\varepsilon^{\alpha/2}\dt u_\varepsilon }{\Lnorm{2}} \norm{\nabla \nabla_h u_{\varepsilon}}{\Lnorm{2}},\\
    & \int q_\varepsilon^\alpha (u_\varepsilon \cdot \nabla) u_\varepsilon \cdot \Delta_h u_{\varepsilon} \idx \lesssim
    \begin{cases}
    	\norm{\nabla \nabla_h u_{\varepsilon}}{\Lnorm{2}} \norm{u_\varepsilon}{\Lnorm{4}} \norm{q_\varepsilon^{\alpha} \nabla u_\varepsilon}{\Lnorm{4}} & \text{when $ n = 2 $} \\
    	\norm{\nabla \nabla_h u_{\varepsilon}}{\Lnorm{2}} \norm{u_\varepsilon}{\Lnorm{6}} \norm{q_\varepsilon^{\alpha} \nabla u_\varepsilon}{\Lnorm{3}} & \text{when $ n = 3 $}
    \end{cases}\\
    & ~~~~ \lesssim \norm{\nabla \nabla_h u_{\varepsilon}}{\Lnorm{2}} \norm{u_\varepsilon}{\Hnorm{1}}\norm{(z+\varepsilon)^{\alpha}\nabla u_\varepsilon}{\Lnorm{2}}^{1/2} \norm{(z+\varepsilon)^{\alpha}\nabla u_\varepsilon}{\Hnorm{1}}^{1/2}.
\end{align*}
Therefore \eqref{lene:003} implies
\begin{equation}\label{lene:tangential}
    \norm{\nabla \nabla_h u_{\varepsilon}}{\Lnorm{2}} \lesssim \norm{q_\varepsilon^{\alpha/2} u_{\varepsilon,t}}{\Lnorm{2}} + \norm{u_\varepsilon}{\Hnorm{1}}\norm{(z+\varepsilon)^{\alpha}\nabla u_\varepsilon}{\Lnorm{2}}^{1/2} \norm{(z+\varepsilon)^{\alpha}\nabla u_\varepsilon}{\Hnorm{1}}^{1/2}.
\end{equation}

\vspace{0.25cm}
{\par\noindent\bf Step 2:} Obtain estimate for the pressure.
Notice
\begin{equation}\label{id:laplace}
\begin{aligned}
   & q_\varepsilon^\alpha \Delta u_\varepsilon = \Delta (q_\varepsilon^\alpha u_\varepsilon) - 2 \nabla q_\varepsilon^\alpha \cdot \nabla u_\varepsilon - (\Delta q_\varepsilon^\alpha) u_\varepsilon  \\
   & ~~~~ ~~~~  = \Delta (q_\varepsilon^\alpha u_\varepsilon)
    - 2 (q_\varepsilon^\alpha)'
     \dz u_\varepsilon -
     (q_\varepsilon^\alpha)''
     u_\varepsilon .
\end{aligned}\end{equation}
Therefore, after multiplying \subeqref{eq:approximating}{1} with $ q_\varepsilon^{3\alpha} $ and applying $ \dv $ to the resulting equation, we end up with
\begin{equation}\label{ellip:p-0}
     \begin{aligned}
        & \dv(q_\varepsilon^{4\alpha} \nabla p_\varepsilon) = - \dv \bigl\lbrack q_\varepsilon^{4\alpha} ( \dt u_\varepsilon + u_\varepsilon \cdot \nabla u_\varepsilon ) - q_\varepsilon^{2\alpha} \Delta (q_\varepsilon^\alpha u_\varepsilon) \\
        & ~~~~ + 2 q_\varepsilon^{2\alpha}(q_\varepsilon^\alpha)' \dz u_\varepsilon + q_\varepsilon^{2\alpha} (q_\varepsilon^\alpha)'' u_\varepsilon  \bigr\rbrack
        = - \dv \bigl\lbrack q_\varepsilon^{4\alpha} ( \dt u_\varepsilon + u_\varepsilon \cdot \nabla u_\varepsilon ) \bigr\rbrack \\
        & ~~~~ + 2 \alpha q_\varepsilon^{2\alpha-1}q_\varepsilon' \underbrace{\bigl( \Delta (q_\varepsilon^\alpha w_\varepsilon) - q_\varepsilon^\alpha \dz \dv u_\varepsilon \bigr)}_{ = q_\varepsilon^\alpha (\Delta_h w_\varepsilon - \dz \dvh v_\varepsilon) + 2  (q_\varepsilon^{\alpha})' \dz w_\varepsilon + (q_\varepsilon^\alpha)'' w_\varepsilon} \\
        & ~~~~ - 2 (q_\varepsilon^{2\alpha} (q_\varepsilon^\alpha)')' \dz w_\varepsilon- \bigl( q_\varepsilon^{\alpha} (q_\varepsilon^\alpha)'' \bigr)' q_\varepsilon^\alpha w_\varepsilon
         .
     \end{aligned}
\end{equation}
Recall that $ p_\varepsilon $ satisfies \eqref{pressure:bc-approximating}, and the integration by parts in the following is allowed.

Thus, after taking the $ L^2 $-inner product of \eqref{ellip:p-0} with $ - p_\varepsilon $ and applying integration by parts in the resultant using the boundary conditions \eqref{bc:2d-anel-NS} and \eqref{pressure:bc-approximating}, we arrive at
\begin{equation}\label{lene:p-01}
     \norm{q_\varepsilon^{2\alpha}\nabla p_\varepsilon}{\Lnorm{2}}^2 = \underbrace{ - \int  q_\varepsilon^{4\alpha} ( \dt u_\varepsilon + u_\varepsilon\cdot \nabla u_\varepsilon) \cdot \nabla p_\varepsilon \idx }_{(I)} + (II),
\end{equation}
where
\begin{align*}
	& (II) = \int \bigl\lbrack 2\alpha q_\varepsilon^{3\alpha-1}q_\varepsilon' (\dz \dvh v_\varepsilon - \Delta_h w_\varepsilon) \\
	& ~~~~ ~~~~ + 2( (q_\varepsilon^{2\alpha} (q_\varepsilon^\alpha)')' - 2 \alpha q_\varepsilon^{2\alpha-1}q_\varepsilon' (q_\varepsilon^\alpha)' ) \dz w_\varepsilon \\
	& ~~~~ ~~~~ + ( (q_\varepsilon^\alpha (q_\varepsilon^\alpha)'')' - 2 \alpha q_\varepsilon^{\alpha-1} q_\varepsilon' (q_\varepsilon^\alpha)'' ) q_\varepsilon^\alpha w_\varepsilon \bigr\rbrack \times p_\varepsilon \idx
\end{align*}
Now we need to evaluate the right-hand side of \eqref{lene:p-01}. Indeed, applying the H\"older and the Sobolev embedding inequalities in $ (I) $ yields
\begin{align*}
    & |(I)| \lesssim \norm{q_\varepsilon^{2\alpha} \nabla p_\varepsilon}{\Lnorm{2}} \norm{q_\varepsilon^{2\alpha} u_{\varepsilon,t}}{\Lnorm{2}} \\
    & ~~~~ ~~~~ +
    \begin{cases}
    	\norm{q_\varepsilon^{2\alpha} \nabla p_\varepsilon}{\Lnorm{2}} \norm{q_\varepsilon^\alpha \nabla u_\varepsilon}{\Lnorm{4}} \norm{q_\varepsilon^{\alpha} u_\varepsilon}{\Lnorm{4}} & \text{when $ n = 2 $}\\
    	\norm{q_\varepsilon^{2\alpha} \nabla p_\varepsilon}{\Lnorm{2}} \norm{q_\varepsilon^\alpha \nabla u_\varepsilon}{\Lnorm{3}} \norm{q_\varepsilon^{\alpha} u_\varepsilon}{\Lnorm{6}} & \text{when $ n = 3 $}
    \end{cases} \\
    & ~~~~ \lesssim  \norm{q_\varepsilon^{2\alpha} \nabla p_\varepsilon}{\Lnorm{2}} \norm{q_\varepsilon^{2\alpha} u_{\varepsilon,t}}{\Lnorm{2}} \\
    & ~~~~ ~~~~ + \norm{q_\varepsilon^{2\alpha} \nabla p_\varepsilon}{\Lnorm{2}} \norm{(z+\varepsilon)^\alpha \nabla u_\varepsilon}{\Hnorm{1}}^{1/2} \norm{(z+\varepsilon)^{\alpha}\nabla u_\varepsilon}{\Lnorm{2}}^{1/2} \norm{(z+\varepsilon)^{\alpha} u_\varepsilon}{\Hnorm{1}}.
\end{align*}
To estimate $ (II) $, notice that from \subeqref{eq:approximating}{2} and \eqref{bc:2d-anel-NS}, we have
\begin{equation}\label{id:vertical-v}
	\bigl\lbrack q_\varepsilon^\alpha w_\varepsilon \bigr\rbrack (\cdot, z) = - \int_0^z \bigl\lbrack q_\varepsilon^\alpha \dvh v_\varepsilon \bigr\rbrack(\cdot,z') \,dz'.
\end{equation}
Then after substituting \eqref{id:vertical-v} in $ (II) $ and applying integration by parts, it follows,
\begin{align*}
	& (II)  = \int \nabla_h p_\varepsilon \cdot \bigl\lbrack 2\alpha q_\varepsilon^{3\alpha - 1}q_\varepsilon' (\nabla_h w_\varepsilon - \dz v_\varepsilon ) \\
    & ~~~~ + 2 \bigl( (q_\varepsilon^{2\alpha} (q_\varepsilon^\alpha)')' - 2 \alpha q_\varepsilon^{2\alpha-1}q_\varepsilon' (q_\varepsilon^\alpha)' \bigr) \bigl( q_\varepsilon^{-\alpha} \int_0^z \bigl( q_\varepsilon^\alpha  v_\varepsilon \bigr)(\cdot, z') \,dz' \bigr)_z \\
    & ~~~~ + \bigl( (q_\varepsilon^\alpha(q_\varepsilon^\alpha)'')' -2 \alpha q_\varepsilon^{\alpha-1} q_\varepsilon' (q_\varepsilon^\alpha)'' \bigr) \int_0^z \bigl( q_\varepsilon^\alpha  v_\varepsilon \bigr)(\cdot, z') \,dz' \bigr\rbrack \idx.
 \end{align*}
 Then applying Properties 4 and 5 in Lemma \ref{lm:approximating-z} and the H\"older inequality yields,
\begin{equation}\label{lene:29Jan-001}
\begin{aligned}
    & (II) \lesssim \norm{q_\varepsilon^{2\alpha} \nabla p_\varepsilon }{\Lnorm{2}} \bigl( \norm{(z+\varepsilon)^{\alpha - 1 }\nabla u_\varepsilon}{\Lnorm{2}} + \norm{(z+\varepsilon)^{\alpha - 2} v_\varepsilon}{\Lnorm{2}} \\
    & ~~~~ ~~~~ + \norm{(z+\varepsilon)^{-3} \int_0^z \bigl( q_\varepsilon^\alpha v_\varepsilon \bigr)(\cdot ,z') \,dz'}{\Lnorm{2}} \bigr) \lesssim \norm{q_\varepsilon^{2\alpha} \nabla p_\varepsilon }{\Lnorm{2}} \norm{u_\varepsilon}{\Hnorm{1}},
\end{aligned}
\end{equation}
where in the last inequality, we have applied Hardy-type inequality in the vertical direction (see Lemma \ref{lm:hardy-type-ineq}), with $ \alpha - 2 > -1/2 $, i.e., $ \alpha > 3/2 $.

Therefore, \eqref{lene:p-01} implies, for $ \alpha > 3/2 $,
\begin{equation}\label{lene:p-02}
   \begin{aligned}
   & \norm{q_\varepsilon^{2\alpha}\nabla p_\varepsilon}{\Lnorm{2}} \lesssim \norm{q_\varepsilon^{\alpha/2}u_{\varepsilon,t}}{\Lnorm{2}} + \norm{u_\varepsilon}{\Hnorm{1}}\\
  &  + \norm{(z+\varepsilon)^\alpha \nabla u_\varepsilon}{\Hnorm{1}}^{1/2} \norm{(z+\varepsilon)^{\alpha}\nabla u_\varepsilon}{\Lnorm{2}}^{1/2} \norm{u_\varepsilon}{\Hnorm{1}}.
  \end{aligned}
\end{equation}

\vspace{0.25cm}
{\par\noindent\bf Step 3:} Obtain estimate for $ \partial_{zz} u $.
We rewrite \subeqref{eq:2d-anel-NS}{1} as,
\begin{equation}\label{eq:normal-d}
    \partial_{zz} u_\varepsilon = - \Delta_h u_\varepsilon + q_\varepsilon^{\alpha} (\dt u_\varepsilon + u_\varepsilon \cdot\nabla u_\varepsilon ) + q_\varepsilon^{\alpha} \nabla p_\varepsilon.
\end{equation}
Then directly, we have
\begin{equation}\label{lene:normal-d}
\begin{aligned}
    & \norm{q_\varepsilon^{\alpha} \partial_{zz} u_\varepsilon}{\Lnorm{2}} \lesssim \norm{\Delta_h u_\varepsilon}{\Lnorm{2}} + \norm{q_{\varepsilon}^{\alpha/2} u_{\varepsilon,t}}{\Lnorm{2}} + \norm{q_\varepsilon^{2\alpha}\nabla p_\varepsilon}{\Lnorm{2}}\\
    & ~~~~ ~~~~ ~~~~ + \norm{(z+\varepsilon)^{2\alpha} u_{\varepsilon} \cdot \nabla u_\varepsilon }{\Lnorm{2}},
\end{aligned}
\end{equation}
where the last term on the right-hand side can be estimated as
\begin{align*}
    & \norm{(z+\varepsilon)^{2\alpha} u_\varepsilon \cdot \nabla u_\varepsilon}{\Lnorm{2}} \lesssim
    \begin{cases}
    	\norm{u_\varepsilon}{\Lnorm{4}}\norm{(z+\varepsilon)^{\alpha}\nabla u_\varepsilon}{\Lnorm{4}} & \text{when $ n = 2 $}\\
    	\norm{u_\varepsilon}{\Lnorm{6}}\norm{(z+\varepsilon)^{\alpha}\nabla u_\varepsilon}{\Lnorm{3}} & \text{when $ n = 3 $}
    \end{cases}\\
    & ~~~~ ~~~~ \lesssim \norm{u_\varepsilon}{\Hnorm{1}}\norm{(z+\varepsilon)^{\alpha}\nabla u_\varepsilon}{\Lnorm{2}}^{1/2} \norm{(z+\varepsilon)^{\alpha}\nabla u_\varepsilon}{\Hnorm{1}}^{1/2}.
\end{align*}

\vspace{0.25cm}

Notice,
\begin{equation}\label{lene:29Jan-002}
	\begin{aligned}
	& \norm{(z+\varepsilon)^\alpha \nabla u_\varepsilon }{\Hnorm{1}} \lesssim \norm{(z+\varepsilon)^\alpha \nabla u_\varepsilon }{\Lnorm{2}} + \norm{(z+\varepsilon)^{\alpha-1} \nabla u_\varepsilon}{\Lnorm{2}} \\
	& ~~~~ + \norm{(z+\varepsilon)^\alpha \nabla^2 u_\varepsilon }{\Lnorm{2}}.
	\end{aligned}
\end{equation}
Consequently, \eqref{lene:tangential}, \eqref{lene:p-02} and \eqref{lene:normal-d} yield \eqref{lene:H^2}. 


Now we collect \eqref{lene:evo-01} and \eqref{lene:H^2} to finish the proof. Indeed, after applying the Hardy-type inequality in Lemma \ref{lm:hardy-type-ineq}, we have the following inequalities
\begin{equation}\label{lene:29Jan-003}
\begin{aligned}
	& \norm{u_\varepsilon}{\Hnorm{1}} \lesssim \norm{(z+\varepsilon) u_\varepsilon}{\Lnorm{2}} + \norm{\nabla u_\varepsilon}{\Lnorm{2}}\lesssim \norm{(z+\varepsilon)^2 u_\varepsilon}{\Lnorm{2}} + \norm{\nabla u_\varepsilon}{\Lnorm{2}} \\
	& ~~~~  \lesssim \cdots \lesssim \norm{(z+\varepsilon)^{\alpha/2} u_\varepsilon}{\Lnorm{2}} + \norm{\nabla u_\varepsilon}{\Lnorm{2}}, \\
	& \norm{u_{\varepsilon,t}}{\Hnorm{1}} \lesssim 	\norm{(z+\varepsilon)^{\alpha/2} u_{\varepsilon,t}}{\Lnorm{2}} + \norm{\nabla u_{\varepsilon,t}}{\Lnorm{2}}.
\end{aligned}
\end{equation}
Therefore, together with Property 4 in Lemma \ref{lm:approximating-z},  \eqref{lene:evo-01}, \eqref{lene:H^2} and \eqref{lene:29Jan-003} imply the estimates in \eqref{lene:energy-total}.
\end{proof}

With \eqref{lene:energy-total},
we claim that, as $ \varepsilon \rightarrow 0^+ $, $ (u_\varepsilon, p_\varepsilon) $ converges to a strong solution to \eqref{eq:physicalvacuum-anel-NS}. Indeed, consider $ \vec{\psi} = (\psi_h, \psi_v)^\top \in C_0^\infty(\Omega;\mathbb R^{n-1} \times \mathbb R) $, where $ \psi_h $ is a scalar function, when $ n =2 $, a two-dimensional vector field, when $ n = 3 $. Here $ C_0^\infty(\Omega;\mathbb R^{n-1} \times \mathbb R) $ is the space of functions which are periodic in the horizontal variables and are of compact support in the vertical variable.
Then we have,
\begin{equation}\label{approximating-distrib}
	\begin{gathered}
		\int_0^T \int_{\Omega} \bigl\lbrack q_\varepsilon^\alpha \dt u_\varepsilon \cdot \vec{\psi} + (q_\varepsilon^\alpha u_\varepsilon \cdot \nabla) u_\varepsilon \cdot \vec{\psi} + q_\varepsilon^\alpha \nabla p_\varepsilon \cdot \vec{\psi} \bigr\rbrack \idx\,dt \\
		- \int_0^T \int_{\Omega} \Delta u_\varepsilon \cdot \vec{\psi} \idx \,dt = 0.
	\end{gathered}
\end{equation}
\eqref{lene:energy-total} implies that
 there exist $ u, p $ with
\begin{equation}\label{regularity-total}
\begin{gathered}
	 u, \nabla_h u \in L^\infty(0,T;H^1(\Omega)), ~ u \in L^2(0,T;H^1(\Omega)), \\
	  \rho_\mathrm{pv} \partial_{zz} u \in L^\infty(0,T; L^2(\Omega)), ~ \rho_\mathrm{pv}^{1/2} u_t \in L^\infty(0,T;L^2(\Omega)), \\
	  u_t \in L^2(0,T;H^1(\Omega)), ~ \rho_\mathrm{pv}^{2} \nabla p \in L^\infty(0,T;L^2(\Omega)),
\end{gathered}\end{equation}
satisfying the estimate in \eqref{total-estimate-physical-vacuum}, $ \dv(\rho_\mathrm{pv} u)=0 $, and
\begin{equation}\label{compactness-of-approximating}
\begin{aligned}
	u_\varepsilon, \nabla_h u_\varepsilon &  \buildrel\ast\over\rightharpoonup u, \nabla u & & \text{weak-$\ast$ in} ~ L^\infty(0,T;H^1(\Omega)),\\
	q_\varepsilon^\alpha \partial_{zz} u_\varepsilon, q_{\varepsilon}^{\alpha/2} u_{\varepsilon,t} &  \buildrel\ast\over\rightharpoonup \rho_\mathrm{pv}  \partial_{zz} u, \rho_\mathrm{pv}^{1/2} u_t & & \text{weak-$\ast$ in} ~ L^\infty(0,T;L^2(\Omega)),\\
	u_\varepsilon, \nabla_h u_\varepsilon , q_\varepsilon^\alpha \dz u_\varepsilon & \rightarrow u, \nabla_h u ,\rho_\mathrm{pv} \dz u & &  \text{in} ~ C([0,T];L^2(\Omega)),\\
	u_{\varepsilon,t}, u_\varepsilon & \rightharpoonup u_t, u & & \text{weakly in} ~ L^2(0,T;H^1(\Omega)), \\
	q_{\varepsilon}^{2\alpha} \nabla p_\varepsilon & \buildrel\ast\over\rightharpoonup \rho_\mathrm{pv}^{2} \nabla p & & \text{weak-$\ast$ in} ~ L^\infty(0,T;L^2(\Omega)),
\end{aligned}
\end{equation}
where we have used Property 1 in Lemma \ref{lm:approximating-z}.
Thus we have $ \lim\limits_{t\rightarrow 0^+} u = u_{in}
$, and after passing the limit with $ \varepsilon \rightarrow 0^+ $, in \eqref{approximating-distrib}, we have
\begin{equation}\label{approximating-distrib-limit}
	\begin{gathered}
		\int_0^T \int_{\Omega} \bigl\lbrack \rho_\mathrm{pv} \dt u \cdot \vec{\psi} + (\rho_\mathrm{pv} u \cdot \nabla) u \cdot \vec{\psi} + \rho_\mathrm{pv} \nabla p \cdot \vec{\psi} \bigr\rbrack \idx\,dt \\
		 - \int_0^T \int_{\Omega} \Delta u  \cdot \vec{\psi} \idx \,dt = 0,
	\end{gathered}
\end{equation}
which verifies that $ (u,p)|_{t\in (0,T)} $ is a solution to \eqref{eq:physicalvacuum-anel-NS} in $ \Omega $. We recall that $ \vec{\psi} $ is chosen such that its support is away from $ \lbrace z = 0 , 1 \rbrace $.
Moreover, it is easy to verify
\begin{equation}\label{pressure-viscosity}
	- \Delta u + \rho_\mathrm{pv} \nabla p = - \rho_\mathrm{pv} \dt u - \rho_\mathrm{pv} u\cdot \nabla u \in L^\infty(0,T;L^2(\Omega)).
\end{equation}

On the other hand, the trace theorem implies that $ \rho_\mathrm{pv}\dz v\big|_{z=0}, \dz v\big|_{z=1}, w\big|_{z=0,1} \in L^2(0,T;L^2(2\mathbb T^{n-1})) $, thanks to the regularity in \eqref{regularity-total}. Thus
$$ \rho_\mathrm{pv} \dz v\big|_{z=0} = 0,~ \dz v\big|_{z=1} = 0,~ w\big|_{z=0,1} = 0 ~~~~ \text{in} ~ L^2(0,T;L^2(2\mathbb T^{n-1})).$$
To verify the boundary condition $ \dz v\big|_{z=0} = 0 $ in \eqref{bc:2d-anel-NS}, consider $ \psi_{h,\varepsilon} (x,z,t) := \bigl(1- c_\varepsilon q_\varepsilon (z) \bigr) \psi_1(x,t) $ with $ \psi_1 \in C^\infty (2\mathbb T^{n-1} \times [0,T]; \mathbb R^{n-1}) $ for some constant $ c_\varepsilon $ satisfying
$$
\int_0^1 \bigl(1 - c_\varepsilon q_\varepsilon (z) \bigr) q_\varepsilon^\alpha(z) \,dz = 0, ~~ i.e., ~ c_\varepsilon := \dfrac{ \int_0^1 q_\varepsilon^\alpha(\xi) \,d\xi }{ \int_0^1 q_\varepsilon^{\alpha+1}(\xi) \,\xi }.
$$
Consider $ \vec{\psi}_{\varepsilon} := (\psi_{h,\varepsilon}, \psi_{v,\varepsilon} ) $ with $ \psi_{h,\varepsilon} $ given as above and
\begin{equation*}
\psi_{v,\varepsilon}(x,z,t) := - q_\varepsilon^{-\alpha}(z) \int_0^z q_\varepsilon^{\alpha}(\xi) \dvh \psi_{h,\varepsilon}(x,\xi,t) \,d\xi.
 \end{equation*}
 Then $ \vec{\psi}_\varepsilon $ satisfies $ \dv \bigl( q_\varepsilon^\alpha \vec{\psi}_\varepsilon \bigr) = 0 $, $ \psi_{v,\varepsilon}\big|_{z=0,1} = 0$, and as $ \varepsilon \rightarrow 0^+ $, $ \vec{\psi}_{\varepsilon} \rightarrow \vec{\psi}_{0} = (\psi_{h,0}, \psi_{v,0}) $ uniformly, where
 \begin{gather*}
 	\psi_{h,0}(x,z,t) = \bigl(1 - \dfrac{\int_0^1 \xi^\alpha(2-\xi)^\alpha \,d\xi }{ \int_0^1 \xi^{\alpha+1} (2-\xi)^{\alpha+1} \,d\xi} z(2-z) \bigr) \psi_1(x,t) ~~~~ \text{and}\\
 	\psi_{v,0}(x,z,t) =  \biggl( \dfrac{\int_0^1 \xi^\alpha(2-\xi)^\alpha \,d\xi }{ \int_0^1 \xi^{\alpha+1} (2-\xi)^{\alpha+1} \,d\xi} \dfrac{\int_0^z \xi^{\alpha+1}(2-\xi)^{\alpha+1} \,d\xi}{z^\alpha (2-z)^\alpha} \\
 	~~~~ ~~~~- \dfrac{\int_0^z \xi^{\alpha}(2-\xi)^{\alpha} \,d\xi}{z^\alpha (2-z)^\alpha} \biggr) \dvh \psi_1(x,t)  .
 \end{gather*}
Now we choose $ \vec{\psi} = \vec{\psi}_{\varepsilon} $ in \eqref{approximating-distrib}. After applying integration by parts, we arrive at
 \begin{gather*}
 	\int_0^T \int_{\Omega} \bigl\lbrack q_\varepsilon^\alpha \dt u_\varepsilon \cdot \vec{\psi}_\varepsilon + (q_\varepsilon^\alpha u_\varepsilon \cdot \nabla) u_\varepsilon \cdot \vec{\psi}_\varepsilon \bigr\rbrack \idx\,dt \\
		 = - \int_0^T \int_{\Omega} \nabla u_\varepsilon : \nabla \vec{\psi}_\varepsilon \idx \,dt + \int_0^T  \int_{2\mathbb T^{n-1}} (\dz v_\varepsilon \cdot \psi_{h,\varepsilon})|_{z=1} \,dx\,dt\\
		 - \int_0^T  \int_{2\mathbb T^{n-1}} (\dz v_\varepsilon \cdot \psi_{h,\varepsilon})|_{z=0} \,dx\,dt,
 \end{gather*}
 which, together with \eqref{lene:energy-total} and the trace theorem, implies that
 \begin{equation}\label{approximating-bc-dzv}
 	\begin{aligned}
 		&(1 - c_\varepsilon q_\varepsilon(0))   \int_0^T  \int_{2\mathbb T^{n-1}} (\dz v_\varepsilon \cdot \psi_1(x,t))|_{z=0} \,dx\,dt \\
 		& ~~  = \int_0^T  \int_{2\mathbb T^{n-1}} (\dz v_\varepsilon \cdot \psi_{h,\varepsilon})|_{z=0} \,dx\,dt \leq  C_{in} \norm{\vec{\psi}_{\varepsilon}}{L^2(0,T;H^1(\Omega))} \\
 		& ~~~~ \leq C_{in} \norm{\psi_1}{L^2(0,T;H^2(2\mathbb T^{n-1}))}.
 	\end{aligned}
 \end{equation}
 Notice that, Property 4 in Lemma \ref{lm:approximating-z} implies $ 1- c_\varepsilon q_\varepsilon(0) > 1/2 $ for $ \varepsilon $ small enough.
Thus, \eqref{approximating-bc-dzv} yields that
$ \lbrace \dz v_{\varepsilon}\big|_{z=0} \rbrace $ is uniformly bounded in $ L^2(0,T;(H^2(2\mathbb T^{n-1}))^*) $ and thus as $ \varepsilon \rightarrow 0^+ $,
$$0 = \dz v_{\varepsilon}\big|_{z=0} \rightharpoonup  \dz v\big|_{z=0}  ~~ \text{weakly in}~ L^2(0,T;(H^2(2\mathbb T^{n-1}))^*). $$ In particular,  $ \dz v\big|_{z=0} = 0 ~ \text{in}~ L^2(0,T;L^2(2\mathbb T^{n-1})) $ and so we have verified the boundary conditions in \eqref{bc:2d-anel-NS}.

In addition, consider $ \psi_h \in L^2(0,T;H^1(\Omega)) $ and $ \psi_v\in  L^2(0,T;H^1_0(\Omega)) $.
Then, $ ( \Delta v,\Delta w)^\top \in (L^2(0,T;H^1(\Omega)))^* \times (L^2(0,T;H^1_0(\Omega)))^* $
is a functional which acts on $ (\psi_h, \psi_v)^\top $ by the duality
$$ \langle (\Delta v, \Delta w)^\top, (\psi_h,\psi_v)^\top \rangle = - \int_0^T \int_\Omega \nabla v : \nabla \psi_h \idx \,dt - \int_0^T \int_\Omega \nabla w \cdot \nabla \psi_v \idx \,dt.
$$
Moreover, from \eqref{pressure-viscosity}, one can infer that $ \rho_\mathrm{pv} \nabla p $ is a functional acting on $ (\psi_h, \psi_v)^\top $. In particular, if $ \dv(\rho_\mathrm{pv} \vec{\psi}) = 0 $, we have
  $$ \langle \rho_\mathrm{pv} \nabla p,(\psi_h, \psi_v)^\top \rangle = -\int_0^T \int_\Omega p \dv (\rho_\mathrm{pv} \vec{\psi})\idx\,dt = 0. $$
Consequently, the regularity of $ u $, as in \eqref{regularity-total}, allows us to consider the action of \subeqref{eq:physicalvacuum-anel-NS}{1} on $ u $. That is, the following equation holds in $ \mathcal D'(0,T) $,
\begin{equation*}
	\dfrac{1}{2} \dfrac{d}{dt} \int \rho_\mathrm{pv} u^2 \idx + \int |\nabla u|^2 \idx = 0.
\end{equation*}
Thus we have the energy identity, for any $ t \in [0,T] $,
\begin{equation}\label{id:energy-identity}
	\norm{\rho_\mathrm{pv}^{1/2} u(t)}{\Lnorm{2}}^2 + 2 \int_0^t \norm{\nabla u(s)}{\Lnorm{2}}^2\,ds = \norm{\rho_\mathrm{pv}^{1/2} u_{in}}{\Lnorm{2}}^2.
\end{equation}

With such properties, we are able to show the uniqueness of solutions. Consider $ u_1, u_2 $ being solutions to \eqref{eq:physicalvacuum-anel-NS} as above with initial data $ u_{in,1}, u_{in,2} $. Also, denote $ T \in (0,\infty) $ as the existence time for both solutions. Then consider the actions of  \subeqref{eq:physicalvacuum-anel-NS}{1} for $ u_1 $ with $ u_2 $ and \subeqref{eq:physicalvacuum-anel-NS}{1} for $ u_2 $ with $ u_1 $.  Summing up the results leads to, for any $ t\in [0,T] $,
\begin{equation}\label{uniqueness-interaction}
\begin{aligned}
	& \int_\Omega \rho_\mathrm{pv} u_1(t) \cdot  u_2(t) \idx + \int_0^t \int_\Omega 2 \nabla u_1(s) : \nabla u_2(s) \idx \,ds   = \int_\Omega \rho_\mathrm{pv} u_{in,1} \cdot u_{in,2} \idx \\
	& ~~~ - \int_0^t \int_\Omega \rho_\mathrm{pv} ((u_1(s) \cdot \nabla) u_1(s) \cdot u_2 (s)+ (u_2(s) \cdot \nabla) u_2(s) \cdot u_1(s) ) \idx\,ds.
\end{aligned}
\end{equation}
Notice, after applying \subeqref{eq:physicalvacuum-anel-NS}{2} and integration by parts, we have
\begin{align*}
	& \int_\Omega \rho_\mathrm{pv} ((u_1 \cdot \nabla) u_1 \cdot u_2 + (u_2 \cdot \nabla) u_2 \cdot u_1 )\idx \\
	& ~~~~ = \int_\Omega \rho_\mathrm{pv} ( (u_1 - u_2) \cdot \nabla) (u_1-u_2) \cdot u_2 \idx\\
	& ~~~~ \leq
	 \begin{cases}
	 \norm{z^\alpha (u_1-u_2)}{\Lnorm{4}} \norm{\nabla (u_1 - u_2)}{\Lnorm{2}} \norm{u_2}{\Lnorm{4}} & \text{when} ~ n = 2,\\
	  \norm{z^\alpha(u_1-u_2)}{\Lnorm{3}} \norm{\nabla (u_1 - u_2)}{\Lnorm{2}} \norm{u_2}{\Lnorm{6}} & \text{when} ~ n = 3,\\
	 \end{cases}\\
	& ~~~~ \leq C \norm{z^\alpha(u_1-u_2)}{\Lnorm{2}}^{1/2} \norm{z^\alpha (u_1 - u_2)}{\Hnorm{1}}^{1/2} \norm{\nabla (u_1-u_2)}{\Lnorm{2}} \norm{u_2}{\Hnorm{1}}\\
	& ~~~~ \leq C \norm{\rho_\mathrm{pv}(u_1-u_2)}{\Lnorm{2}}^{1/2} ( \norm{\rho_\mathrm{pv} (u_1 - u_2)}{\Lnorm{2}}+ \norm{\rho_\mathrm{pv} \nabla (u_1 - u_2)}{\Lnorm{2}})^{1/2} \\
	& ~~~~ ~~~~ \times \norm{\nabla (u_1-u_2)}{\Lnorm{2}} \norm{u_2}{\Hnorm{1}},
\end{align*}
where the last inequality follows by applying Hardy's inequality in Lemma \ref{lm:hardy-type-ineq} and the fact that $ \rho_\mathrm{pv} \simeq z^\alpha $ for $ z \in (0,1/2) $.

Therefore, \eqref{uniqueness-interaction}, together with the energy identity \eqref{id:energy-identity} for $ u_1, u_2 $, implies
\begin{align*}
	& \norm{\rho_\mathrm{pv}^{1/2} (u_1(t) - u_2(t)) }{\Lnorm{2}}^2 + 2 \int_0^t \norm{\nabla (u_1(s) - u_2(s))}{\Lnorm{2}}^2 \,ds \\
	& ~~~~ \leq \norm{\rho_\mathrm{pv}^{1/2} (u_{in,1} - u_{in,2}) }{\Lnorm{2}}^2 + \int_0^t \norm{\nabla (u_1(s)-u_2(s))}{\Lnorm{2}}^2 \,ds \\
	& ~~~~ + C \int_0^t (1 + \norm{u_2(s)}{\Hnorm{1}}^4) \norm{\rho_\mathrm{pv}^{1/2} (u_1(s)-u_2(s))}{\Lnorm{2}}^2 \,ds.
\end{align*}
Then applying Gr\"onwall's inequality yields,
\begin{equation}\label{stability}
	\begin{aligned}
	& \sup_{0\leq t\leq T}\norm{\rho_\mathrm{pv}^{1/2} (u_1(t) - u_2(t)) }{\Lnorm{2}}^2 + \int_0^T \norm{\nabla (u_1(s) - u_2(s))}{\Lnorm{2}}^2 \,ds \\
	& ~~~~ \leq C_{in,T} \norm{\rho_\mathrm{pv}^{1/2} (u_{in,1} - u_{in,2}) }{\Lnorm{2}}^2,
	\end{aligned}
\end{equation}
for some constant $ C_{in,T} $ depending on $ T $ and the initial data $ u_{in,1},u_{in,2} $. In particular, this implies the uniqueness of solutions.

We remark that the above uniqueness argument is similar to the one used by J. Serrin for weak-strong uniqueness of three-dimensional Navier--Stokes equations in \cite{Serrin1963}.


\section{Global-in-time a priori estimates when $ n = 2 $}\label{sec:global-2d}

In this and the following sections, we present some global-in-time a priori estimates of solutions to \eqref{eq:physicalvacuum-anel-NS}. These arguments can be rigorously justified following the arguments in sections  \ref{sec:local-solutions-nonsingular} and \ref{sec:physical-vacuum}.
The estimates of solutions to \eqref{eq:nonsingular-anel-NS} are similar, and will be omitted.

Notice that, the regularity of $ u $ in \eqref{regularity-total} allows us to take the following actions.
Taking the $ L^2 $-inner product of \subeqref{eq:physicalvacuum-anel-NS}{1} with $ u $ implies, as in \eqref{lene:000}, 
\begin{equation}\label{lene:0000}
    \dfrac{1}{2} \dfrac{d}{dt} \int \rho_\mathrm{pv} |u|^2 \idx + \int |\nabla u|^2 \idx = 0.
\end{equation}
As in \eqref{lene:001}, we also have,
\begin{equation}\label{lene:0001}
    \dfrac{1}{2} \dfrac{d}{dt} \int |\nabla u|^2 \idx + \int \rho_\mathrm{pv} |u_t|^2 \idx = - \int \rho_\mathrm{pv} u \cdot \nabla u \cdot u_t \idx.
\end{equation}
The right-hand side of \eqref{lene:0001} can be estimated as follows, due to the fact that $ \rho_\mathrm{pv} \lesssim z^\alpha $,
\begin{align*}
    & - \int \rho_\mathrm{pv} u \cdot \nabla u \cdot u_t \idx \leq \dfrac{1}{2} \int \rho_\mathrm{pv} |u_t|^2 \idx + C \norm{z^\alpha u}{\Lnorm{\infty}}^2 \int |\nabla u|^2\idx\\
    & ~~~~ \leq \dfrac{1}{2} \int \rho_\mathrm{pv} |u_t|^2 \idx + C \int |\nabla u|^2\idx \cdot ( \norm{z^\alpha u}{H^1}^2 + 1 ) \log ( e + \norm{z^\alpha u}{H^2}^2),
\end{align*}
where we have applied Young's inequality and the two-dimensional Brezis-Gallouate-Wainger inequality (see, e.g., \cite{brezis-1,brezis-2}).

Meanwhile, the same arguments as \eqref{eq:dt-ns} through \eqref{lene:002} imply the similar estimate to \eqref{lene:002}, i.e.,
\begin{equation}{\label{lene:0002}}
    \dfrac{1}{2} \dfrac{d}{dt} \int \rho_\mathrm{pv} |u_t|^2 \idx + \int |\nabla u_t|^2 \idx = - \int \rho_\mathrm{pv} (u_t \cdot \nabla) u \cdot u_t \idx,
\end{equation}
where
\begin{align*}
	& - \int \rho_\mathrm{pv} (u_t \cdot \nabla) u \cdot u_t \idx = \int \rho_\mathrm{pv} (u_t \cdot \nabla) u_t \cdot u \idx \leq \dfrac{1}{2} \int  |\nabla u_t|^2 \idx \\
	& ~~~~ + C \norm{z^\alpha u}{\Lnorm{\infty}}^2 \int \rho_\mathrm{pv} |u_t|^2 \idx \leq \dfrac{1}{2} \int  |\nabla u_t|^2 \idx \\
	& ~~~~ + C  \int \rho_\mathrm{pv} |u_t|^2 \idx\cdot ( \norm{z^\alpha u}{H^1}^2 + 1 ) \log ( e + \norm{z^\alpha u}{H^2}^2).
\end{align*}
In addition, due to the fact that $ z^\alpha \lesssim \rho_\mathrm{pv} $ for $ z \in (0,1/2) $, applying Hardy's inequality in Lemma \ref{lm:hardy-type-ineq} yields, with $ \alpha > 3/2 $,
\begin{equation}\label{applying-hardy}
	\begin{aligned}
	& \norm{z^\alpha u}{\Hnorm{1}} \lesssim \norm{z^{\alpha/2} u}{\Lnorm{2}} + \norm{\nabla u}{\Lnorm{2}} \lesssim \norm{\rho_\mathrm{pv}^{1/2} u}{\Lnorm{2}} + \norm{\nabla u}{\Lnorm{2}}, \\
	& \norm{z^\alpha u}{\Hnorm{2}} \lesssim \norm{z^\alpha \nabla^2 u}{\Lnorm{2}} + \norm{z^{\alpha-1} \nabla u}{\Lnorm{2}} + \norm{z^{\alpha-2} u}{\Lnorm{2}}\\
	& ~~~~ ~~~~ \lesssim \norm{\nabla \nabla_h u}{\Lnorm{2}} + \norm{\nabla u}{\Lnorm{2}} + \norm{z^{\alpha/2} u}{\Lnorm{2}} + \norm{z^\alpha \partial_{zz} u}{\Lnorm{2}}\\
	& ~~~~ ~~~~ \lesssim \norm{\nabla \nabla_h u}{\Lnorm{2}} + \norm{\nabla u}{\Lnorm{2}} + \norm{\rho_\mathrm{pv}^{1/2} u}{\Lnorm{2}} + \norm{\rho_\mathrm{pv} \partial_{zz} u}{\Lnorm{2}}.
	\end{aligned}
\end{equation}

On the other hand, similar to \eqref{lene:H^2},
\begin{equation}\label{lene:0H^2}
	\begin{aligned}
		& \norm{\nabla\nabla_h u}{\Lnorm{2}} + \norm{\rho_\mathrm{pv} \partial_{zz} u}{\Lnorm{2}} + \norm{\rho_\mathrm{pv}^2 \nabla p}{\Lnorm{2}} \\
		& ~~~~ \lesssim \norm{\rho_\mathrm{pv}^{1/2} u_t}{\Lnorm{2}} + \norm{u}{\Hnorm{1}} + \norm{u}{\Hnorm{1}}^3.
	\end{aligned}
\end{equation}
Then together with \eqref{lene:0001}, \eqref{lene:0002} ,
we arrive at
\begin{equation}\label{ene:total_ene_ineq}
	\dfrac{d}{dt} E(t) \lesssim E(t) ( 1 + \norm{\rho_\mathrm{pv}^{1/2} u(t)}{\Lnorm{2}}^2 +\norm{\nabla u(t)}{\Lnorm{2}}^2) \log (\norm{\rho_\mathrm{pv}^{1/2} u(t)}{\Lnorm{2}}^{6} + E^3(t) ),
\end{equation}
where
\begin{equation*}
	E(t):= e + \int |\nabla v(t)|^2 \idx + \int \rho_\mathrm{pv} |u_t(t)|^2 \idx.
\end{equation*}
Also, \eqref{lene:0000} implies, for any $ T \in (0,\infty) $,
\begin{equation}\label{19-april-01}
	\sup_{0\leq t\leq T} \norm{\rho_\mathrm{pv}^{1/2} u(t)}{\Lnorm{2}}^2 + \int_0^T \norm{\nabla u(t)}{\Lnorm{2}}^2 \,dt \leq C_{in},
\end{equation}
for some positive constant $ C_{in} $ independent of $ T $. Therefore, \eqref{ene:total_ene_ineq} implies that
\begin{equation*}
	\dfrac{d}{dt} \log E(t) \lesssim (1 +  \norm{\rho_\mathrm{pv}^{1/2} u(t)}{\Lnorm{2}}^2 +\norm{\nabla u(t)}{\Lnorm{2}}^2) \log E(t).
\end{equation*}
Thus applying Gr\"onwall's inequality yields, together with \eqref{19-april-01},
\begin{equation}\label{19-april-02}
\begin{aligned}
	& \sup_{0\leq t \leq T} \log \log E(t) \leq  C \int_0^T (1 + \norm{\rho_\mathrm{pv}^{1/2} u(s)}{\Lnorm{2}}^2 +\norm{\nabla u(s)}{\Lnorm{2}}^2) \,ds\\
	& ~~~~ + \log \log E(0) \leq  C_{in}(T+1) + \log \log E(0).
\end{aligned}
\end{equation}
for some constant $ C_{in} $ depending only on the initial data. \eqref{19-april-01} and \eqref{19-april-02} imply the global well-posedness.

\section{Small data global-in-time a priori estimates when $ n = 3 $}\label{sec:global-3d}

Similarly, the estimates in \eqref{lene:0000}, \eqref{lene:0001} and \eqref{lene:0002} hold. That is,
\begin{gather*}
    \dfrac{1}{2} \dfrac{d}{dt} \int \rho_\mathrm{pv} |u|^2 \idx + \int |\nabla u|^2 \idx = 0, \tag{\ref{lene:0000}} \\
	\tag{\ref{lene:0001}}
    \dfrac{1}{2} \dfrac{d}{dt} \int |\nabla u|^2 \idx + \int \rho_\mathrm{pv} |u_t|^2 \idx = - \int \rho_\mathrm{pv} u \cdot \nabla u \cdot u_t \idx, \\
    {\tag{\ref{lene:0002}}}
    \dfrac{1}{2} \dfrac{d}{dt} \int \rho_\mathrm{pv} |u_t|^2 \idx + \int |\nabla u_t|^2 \idx = - \int \rho_\mathrm{pv} (u_t \cdot \nabla) u \cdot u_t \idx.
\end{gather*}
We estimate the nonlinearities on the right-hand side of \eqref{lene:0001} and \eqref{lene:0002} as follows,
\begin{align*}
	& - \int \rho_\mathrm{pv} u \cdot \nabla u \cdot u_t \idx \leq \dfrac{1}{2} \int \rho_\mathrm{pv} |u_t|^2 \idx + C \norm{z^\alpha u}{\Hnorm{2}}^2 \int |\nabla u|^2\idx, \\
	& - \int \rho_\mathrm{pv} (u_t \cdot \nabla) u \cdot u_t \idx = \int \rho_\mathrm{pv} (u_t \cdot \nabla) u_t \cdot u \idx \leq \dfrac{1}{2} \int  |\nabla u_t|^2 \idx \\
	& ~~~~ + C \norm{z^\alpha u}{\Hnorm{2}}^2 \int \rho_\mathrm{pv} |u_t|^2 \idx.
\end{align*}
Then, after denoting
\begin{equation*}
	E(t): = \int \rho_\mathrm{pv} |u|^2 \idx + \int |\nabla u|^2 \idx + \int \rho_\mathrm{pv} |u_t|^2 \idx,
\end{equation*}
\eqref{lene:0000}, \eqref{lene:0001}, \eqref{lene:0002}, \eqref{applying-hardy} and \eqref{lene:0H^2} imply
\begin{equation*}
	\dfrac{d}{dt} E(t) + (1 - E^6 ) \int (|\nabla u|^2 + \rho_\mathrm{pv} |u_t|^2 + |\nabla u_t|^2) \idx \leq 0,
\end{equation*}
which implies, for $ E(0) $ small enough,
\begin{equation*}
	\sup_{0\leq t < \infty}E(t) \leq E(0).
\end{equation*}
Thus we have shown the global well-posedness with small initial data.

\section*{Acknowledgements}
This work was supported in part by the Einstein Stiftung/Foundation - Berlin, through the Einstein Visiting Fellow Program, and by the John Simon Guggenheim Memorial Foundation.

\bibliographystyle{plain}

\end{document}